\newcommand{\R}{\mathbb{R}}
\newcommand{\N}{\mathbb{N}}
\newcommand{\T}{\mathbb{T}}
\newcommand{\Z}{\mathbb{Z}}
\newcommand{\be}{\begin{equation}}
\newcommand{\ee}{\end{equation}}
\newcommand{\re}[1]{(\ref{#1})}
\newtheorem{thm}{Theorem}[section]
\newtheorem{lem}[thm]{Lemma}
\newtheorem{coro}[thm]{Corollary}
\newtheorem{pro}[thm]{Proposition}
\newtheorem{rem}[thm]{Remark}
\newtheorem*{merci}{Acknowledgements}
\begin{document}
%\selectlanguage{english}
\title[ KP-II on the background of a non localized solution ]
{\bf Global well-posedness for the KP-II equation on the background of a non localized solution}
\author{Luc Molinet} 
\address{Laboratoire de Math\' ematiques et Physique Th\' eorique, UMR CNRS 6083, Facult\' e des Sciences et Techniques, Universit\' e Fran\c{c}ois Rabelais, Parc de Grandmont, 37200 Tours Cedex}
\author{Jean-Claude Saut}
\address{ Universit\'e de Paris-Sud et CNRS, UMR de Math\'ematiques, B\^at. 425, 91405 Orsay Cedex }
\author {Nikolay Tzvetkov}
\address
{D\'epartement de Math\'ematiques, Universit\' e de Cergy-Pontoise, 2 avenue Adolphe Chauvin, 95302 Cergy-Pontoise Cedex}
\email{luc.molinet@lmpt.univ-tours.fr,  jean-claude.saut@math.u-psud.fr, nikolay.tzvetkov@u-cergy.fr}
\date{October 18, 2010}
\begin{abstract}
Motivated by transverse stability issues, we address  the time evolution under the KP-II  flow of perturbations of a solution
which does not decay in all directions, for instance the KdV-line soliton. We study two
different types of perturbations : perturbations that are square integrable in $ \R\times \T $ and perturbations that are square
integrable in $ \R^2 $. In both cases we prove the global well-posedness of the Cauchy problem associated with such initial
data.
\end{abstract}
\keywords{}
\subjclass[2000]{}
\date{}
\maketitle
\section{Introduction}
\subsection{Presentation of the problem}
The  Kadomtsev-Petviashvili (KP) equations
\begin{equation}\label{KP}
(u_t+u_{xxx} +u u_x )_x \pm u_{yy} =0
\end{equation}
were introduced in \cite{KaPe} to study the transverse stability of the solitary wave solution of the 
Korteweg- de Vries equation
\begin{equation}\label {KdV}
u_t+u_x+uu_x+(T-\frac{1}{3})u_{xxx}=0, \;x\in \R,\;t\in\R.
\end{equation}
Here $T\geq 0$ is the Bond number which measures  surface tension effects in the context of surface hydrodynamical waves.
Actually the (formal) analysis in \cite{KaPe} consists in looking for a {\it  weakly transverse} perturbation 
of the transport equation
$u_t+u_x=0.$
This perturbation amounts to adding  to the equation a nonlocal term, 
namely $\frac{1}{2}\partial_x^{-1}u_{yy}$.
The same formal procedure is applied to the KdV equation \eqref{KdV} yielding the KP equation of the form
\begin{equation}\label{KPbrut}
u_t+u_x+uu_x+(T-\frac{1}{3})u_{xxx}+\frac{1}{2}\partial_x^{-1}u_{yy}=0.
\end{equation}
By change of frame and scaling, \eqref{KPbrut} reduces to \eqref{KP} with the $+$ sign (KP-II) when $T<\frac{1}{3}$ and the $-$ sign (KP-I) when $T>\frac{1}{3}$.

As far as the {\it transverse stability} of the KdV solitary wave  ("1-soliton")  $\psi_c(x-ct,y)$, where
\begin{equation}\label{KdV_onde_solitaire}
\psi_c(x,y)=\frac{3c}{2}\,{\rm cosh}^{-2}\Big(\frac{\sqrt{c}\, x}{2}\Big),
\end{equation}
is concerned, the natural initial condition 
associated to \eqref {KP} should be $u_0=\psi_c+v_0$ where $v_0$ is either "localized" in $x$ and $y$, or localized in $x$ and $y$-periodic.
In any case this rules out initial data in Sobolev spaces like $H^s(\R^2)$ or their anisotropic versions $H^{s_1,s_2}(\R^2)$, as was for instance the case considered in \cite {Bo3}, \cite {TT}, \cite {Tz}, 
\cite {H}, \cite{HHK} for the KP-II equation or in \cite{MST}, \cite{K}, \cite{IKT} for the KP-I equation.

Actually, it was proved in  \cite{MST-ter} that the Cauchy problem for the KP-I equation is globally well-posed for data which are localized perturbations of arbitrary size of a {\it non-localized} traveling wave solution such as the KdV N-soliton or the Zaitsev soliton (which is a localized in $x$ and periodic in $y$ solitary wave of the KP-I equation). The same result has been proven in \cite{FP} for localized perturbations of {\it small} size  using Inverse Scattering techniques.

No such result seems to be known for the KP-II equation for data of arbitrary size. The aim of the present paper is to fix this issue. 
Observe that the results in \cite{GPS} concern initial data localized in $y$ and periodic in $x$, for instance  
belonging to $H^s(\T\times \R)$ which excludes initial data of type $\psi_c+v_0$ as above.

On the other hand, the Inverse Scattering method has been used formally in \cite{AV} and rigorously in \cite {AV2} to study the Cauchy problem for the KP-II equation with nondecaying data along a line, that is $u(0,x,y)= u_{\infty}(x-vy)+ \phi(x,y)$ with $\phi(x,y)\to 0$ as $x^2+y^2\to \infty$ and $u_{\infty}(x)\to 0 $ as $|x|\to \infty$.  Typically, $u_{\infty}$ is the profile of a traveling wave solution with its peak localized on a moving line in the $(x,y)$ plane.
It is a particular case of the $N$- soliton of the KP-II equation discovered by Satsuma \cite{Sat} (see the derivation and the explicit form when $N=1,2$ in the Appendix of \cite{NMPZ}). As in all results obtained for KP equations by using the Inverse Scattering method, the initial perturbation of the non-decaying solution is supposed to be small enough in a weighted $L^1$ space (see \cite{AV2} Theorem 13).

As will be proven in the present paper, PDE techniques allow to consider {\it arbitrary large perturbations} of a (smaller) class of non-decaying  solutions of the KP-II equation. 

We will therefore  study here  the initial value problem for the Kadomtsev-Petviashvili (KP-II) equation
\begin{equation}\label{KP2}
(u_t+u_{xxx} +u u_x )_x + u_{yy} =0\,.
\end{equation}
We will either consider periodic in $y$ solutions or we will suppose that
$u=u(t,x,y)$, $(x,y)\in\R^2$, $t\in\R$, with initial data
\begin{equation}\label{1.2}
u(0,x,y)=\phi(x,y)+\psi(x,y),
\end{equation}
where $\psi$ is the profile of a non-localized (i.e. not
decaying in all spatial directions) traveling wave of the KP-II
equation and $\phi$ is localized, i.e. belonging to Sobolev spaces on $\R^2$.
We recall (see \cite {deBS}) that, contrary to the KP-I equation, the KP-II equation does not possess any {\it localized in both directions} traveling wave solution. The background solution  could be for instance the line soliton (1-soliton)   $\psi_c(x-ct,y)$ moving with velocity $c$ of the Korteweg-
de Vries (KdV) equation defined by \eqref{KdV_onde_solitaire}.
But it could also be the profile of the N-soliton solution of the KdV equation, $N\geq 2.$
The KdV N-soliton is of course considered as a two dimensional (constant
in $y$) object. 

Solving the Cauchy  problem in both those functional settings can be viewed as a preliminary (and necessary !) step towards the study of the dynamics of the KP-II equation on the background of a non fully localized solution, in particular towards  a PDE proof of the nonlinear   stability of the KdV soliton or N-soliton  with respect to transversal perturbations governed by the KP-II flow.  This has been established in \cite{AV2} Proposition 17 by Inverse Scattering methods and very recently by Mizumachi and Tzvetkov \cite{MT} who proved (by PDE techniques but using the Miura transform for the KP II equation) the $L^2$ stability of the KP II line soliton with respect to transverse perturbations.

 We recall that it is established in \cite{Za} by Inverse Scattering methods and in \cite{RT,RT2} by PDE techniques which extend to a large class of (non integrable) problems, that the KdV 1-soliton is transversally nonlinearly unstable with respect to the KP I equation.

 The advantage of the PDE approach of the present paper compared to an Inverse Scattering one is that it can be straightforwardly applied to non integrable equations such as  the higher order KP-II equations (see \cite{ST2}, \cite{ST3}).

%%%%%%%%%%%%%%%%%%%%%%%%%%%%%%%%%%%%%%%%%%%%%%%%%%
\subsection{Statement of the results}
As was previously mentioned our main result is that the KP-II equation is globally well-posed for data of arbitrary size in the two afterementioned functional settings.
\begin{thm}\label{theo1}
The Cauchy problem associated with the KP-II equation is globally well-posed in
 $ H^s(\R\times\T) $ for any $ s\ge 0$. More precisely for every $\phi\in H^s(\R\times \T)$ there is a global strong solution of
 \eqref{KP2} in $C(\R;H^s(\R\times\T))$ with initial datum $u(0,x,y)=\phi(x,y)$. The solution is unique in $C(\R;H^s(\R\times\T))$ if
 $s>2$. For $s\in [0,2]$ the uniqueness holds in the following sense. For every $T>0$  there is a Banach space $X_T$ continuously embedded in
 $C([-T,T];H^s(\R\times\T))$ and containing $C([-T,T];H^{\infty}(\R\times\T))$ such that the solution $u$ belongs to $X_T$ and is unique in this class. Moreover the flow is Lipschitz continuous on bounded sets of $ H^s(\R\times\T) $. Namely, for every bounded set $B$ of 
 $ H^s(\R\times\T) $ and every $T>0$  there exists a constant $C$ such that for every $\phi_1, \phi_2\in B$ the corresponding solutions $u_1$, $u_2$
 satisfy
 $$
 \|u_1-u_2\|_{L^\infty([-T,T];H^s(\R\times\T))}\leq C\|\phi_1-\phi_2\|_{H^s(\R\times\T)}\,.
 $$ 
 Finally the $L^2$ norm is conserved by the flow, i.e.
 $$
 \|u(t,\cdot)\|_{L^2(\R\times\T)}=\|\phi\|_{L^2(\R\times\T)}\,,\qquad \forall\, t\in\R.
 $$

 \end{thm}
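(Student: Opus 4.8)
The plan is to solve the Duhamel formulation
\[ u(t) = S(t)\phi - \frac12\int_0^t S(t-t')\,\partial_x\big(u(t')^2\big)\,dt', \]
where $S(t)$ is the KP-II linear group on $\R\times\T$, with Fourier symbol $e^{it\,w(\xi,n)}$ and dispersion $w(\xi,n)=\xi^3-n^2/\xi$, $(\xi,n)\in\R\times\Z$, by a contraction argument in a Bourgain-type space adapted to $S(t)$. For $s$ away from the $L^2$ endpoint the classical $X^{s,b}$ spaces with $b$ slightly above $1/2$ suffice; to reach $s=0$ one works at the scaling-critical level $b=1/2$, where $X^{s,1/2}$ fails by a logarithm, so I would supplement it with the $U^2/V^2$ (Koch--Tataru) spaces as in the Hadac--Herr--Koch treatment of KP-II, or with a Besov refinement of $X^{0,1/2}$. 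First I would fix this resolution space, call its restriction to $[-T,T]$ the space $X_T$ of the statement, and check that it embeds continuously in $C([-T,T];H^s(\R\times\T))$ and contains $C([-T,T];H^{\infty})$, so that it can play the role of the uniqueness class.

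The heart of the matter is the bilinear estimate
\[ \big\|\partial_x(uv)\big\|_{N^s} \lesssim \|u\|_{X^s}\,\|v\|_{X^s}, \]
where $N^s$ is the natural companion space for the nonlinearity. I would prove it frequency-by-frequency using the KP-II resonance identity: with $(\xi,n)=(\xi_1+\xi_2,n_1+n_2)$,
\[ w(\xi,n) - w(\xi_1,n_1) - w(\xi_2,n_2) = 3\,\xi\xi_1\xi_2 + \frac{(n_1\xi_2-n_2\xi_1)^2}{\xi\xi_1\xi_2}, \]
whose two terms share the sign of $\xi\xi_1\xi_2$, so the modulation obeys the lower bound $\lvert w(\xi,n)-w(\xi_1,n_1)-w(\xi_2,n_2)\rvert \gtrsim \lvert\xi\,\xi_1\,\xi_2\rvert$. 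This smoothing, together with the derivative $\lvert\xi\rvert$ produced by $\partial_x$, is exactly what compensates the loss and closes the estimate; the analysis combines an $L^2_\xi$ integration in the continuous variable with an $\ell^2_n$ summation in the discrete one. The genuinely delicate region is that where $\xi=\xi_1+\xi_2$ is small, since there the first term $3\xi\xi_1\xi_2$ degenerates and one must exploit the second term $(n_1\xi_2-n_2\xi_1)^2/(\xi\xi_1\xi_2)$ (harmless only on the pure KdV sector $n_1=n_2=0$) to recover the resonance. I expect this mixed continuous/discrete bilinear estimate at the $L^2$ endpoint to be the main obstacle.

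Granting the bilinear estimate, local well-posedness on $[-T,T]$ with $T=T(\|\phi\|_{H^s})$ follows from the standard fixed-point scheme, and, the nonlinearity being quadratic, the solution map $\phi\mapsto u$ is real-analytic; in particular the stated Lipschitz bound on bounded sets of $H^s(\R\times\T)$ comes from applying the same bilinear estimate to the difference of two solutions. To globalize I would use the conserved $L^2$ norm: multiplying the equation $u_t+u_{xxx}+uu_x+\partial_x^{-1}u_{yy}=0$ by $u$ and integrating over $\R\times\T$, the skew-adjoint linear part $u_{xxx}+\partial_x^{-1}u_{yy}$ contributes nothing and the nonlinear term satisfies $\int u\,uu_x=\tfrac13\int\partial_x(u^3)=0$, whence $\frac{d}{dt}\|u\|_{L^2}^2=0$. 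Since at $s=0$ the local existence time depends only on $\|\phi\|_{L^2}$, which is conserved, the solution extends to all of $\R$; for $s>0$ one adds the usual persistence-of-regularity estimate to propagate the $H^s$ norm on the back of the global $L^2$ control.

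It remains to settle uniqueness. In the resolution space $X_T$ it is automatic from the contraction, which yields the conditional uniqueness asserted for $s\in[0,2]$. For $s>2$, where $H^s(\R\times\T)$ embeds into $W^{1,\infty}$, I would prove unconditional uniqueness in $C(\R;H^s)$ by a direct energy argument: the difference $w=u_1-u_2$ solves $w_t+w_{xxx}+\partial_x^{-1}w_{yy}+\tfrac12\partial_x\big((u_1+u_2)w\big)=0$, and an $L^2$ energy estimate (the antisymmetric linear part dropping out) gives $\frac{d}{dt}\|w\|_{L^2}^2 \lesssim \|\partial_x(u_1+u_2)\|_{L^\infty}\|w\|_{L^2}^2$, which closes by Gronwall since the $W^{1,\infty}$ control of $u_1,u_2$ is furnished by the $H^s$, $s>2$, regularity. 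Finally, the conservation of the $L^2$ norm established above is precisely the last assertion of the theorem.
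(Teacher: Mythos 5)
Your strategy coincides with the paper's: a fixed-point argument on the Duhamel formula in a Bourgain space at the endpoint $b=1/2$, supplemented by an auxiliary space to recover continuity in time (the paper uses $X^{1/2,b_1,s}\cap Z^{1/2,s}$, where $Z^{1/2,s}$ carries an $L^2_{\zeta}L^1_{\tau}$ norm --- this is your ``Besov refinement'' option rather than the $U^2/V^2$ route), the resonance identity \eqref{resonant} as the source of smoothing, $L^2$ conservation for globalization, and a Gronwall argument for unconditional uniqueness when $s>2$. However, the proposal stops exactly where the theorem begins: the endpoint bilinear estimate on $\R_x\times\T_y$, which you yourself flag as ``the main obstacle,'' is asserted rather than proved, and it constitutes essentially all of Sections 2 and 3 of the paper. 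What is actually needed is (i) a bilinear $L^4$--$L^2$ Strichartz-type estimate for functions localized in $|\xi|\sim M_i$, $\langle\sigma_i\rangle\sim K_i$ (Proposition~\ref{propo1}), proved by measure and counting estimates on resonance sets in the mixed variable $(\xi_1,q_1)\in\R\times\Z$, where the discrete variable forces a delicate subdivision according to the size of $|q_1/\xi_1-(q-q_1)/(\xi-\xi_1)|$; and (ii) a dyadic case analysis (Lemmas~\ref{lemme1}--\ref{lemme3}) in which the low-frequency interactions $|\xi_1|\le 1\le|\xi-\xi_1|$, absent from the purely periodic case, require separate treatment, and in which one region (case B.2 of Lemma~\ref{lemme1}) forces the introduction of the extra weight $\langle\langle\sigma\rangle/\langle\xi\rangle^{3}\rangle^{b_1}$ into the norm --- a device your proposed resolution space does not contain. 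None of this is routine, so the proposal as written is an outline of the correct approach rather than a proof.

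Two smaller points. First, your derivation of $L^2$ conservation by multiplying the equation by $u$ and integrating is only formal at $s=0$ (and even for smooth data the term $\partial_x^{-1}u_{yy}$ must be given a meaning before the integrations by parts are legitimate); the paper justifies it by first proving conservation for data in the dense subspace $H^s_{-1}(\R\times\T)$, $s>1$, where $\partial_x^{-1}u$ is a genuine $C([0,T^*];H^s)$ function, and then passing to the limit using the Lipschitz dependence on the data. Second, the claim that for $s$ away from $0$ the classical $X^{s,b}$ spaces with $b>1/2$ suffice is borrowed from the $\R^2$ theory; in the $y$-periodic setting the counting estimates produce losses in the modulation variable that are independent of $s$, which is why the paper works at $b=1/2$ (with the companion $Z$ space) for every $s\ge 0$.
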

 %%%%%%%%%%%%
We next state our result concerning localized perturbations.
\begin{thm}\label{theo2}
Let $\psi_c(x-ct,y)$ be a solution of the KP-II equation such that for every
 $\sigma \ge 0 $,
$
(1-\partial_x^2 -\partial_y^2)^{\frac{\sigma}{2}}\psi_c\,:\, \R^2\rightarrow \R
$
is bounded and belongs to $ L^2_x L^\infty_y (\R^2)$.
Let $s\geq 0$ be an integer.
Then for every $\phi\in H^s(\R^2) $ there exists a global strong solution
$u$ of \re{KP2} with initial data $\phi+\psi_c$. The uniqueness holds in the following sense. For every $T>0$
 there is a Banach space $X_T$ continuously embedded in
 $C([-T,T];H^s(\R\times\T))$ and containing $C([-T,T];H^{\infty}(\R\times\T))$ such that the solution $u$ belongs to $X_T$ and is unique in this class
$
\psi_c(x-ct,y)+X_{T}.
$
\end{thm}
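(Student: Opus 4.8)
The plan is to subtract the background and to treat what remains as a KP-II equation carrying an extra linear potential. Setting $v:=u-\psi_c$ and using that $\psi_c$ solves \re{KP2}, one checks that $v$ must solve
\be
\big(v_t+v_{xxx}+\partial_x(\psi_c v)+vv_x\big)_x+v_{yy}=0,\qquad v(0,\cdot)=\phi ,
\ee
i.e. the usual KP-II equation for $v$ together with the linear term $\partial_x(\psi_c v)$. I would build $v$ by a contraction argument in a Bourgain-type space $X^{s,b}_T$ adapted to the KP-II group on a short interval $[-T,T]$, just as in the Cauchy theory for the unperturbed KP-II equation on $\R^2$. The genuine nonlinearity $vv_x$ is controlled by the standard KP-II bilinear estimate, so the only new ingredient at the local level is to bound the potential term $\partial_x(\psi_c v)$ in $X^{s,b-1}_T$ by $T^{\theta}\|v\|_{X^{s,b}_T}$ for some $\theta>0$, the factor $T^{\theta}$ then providing the contraction.

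Establishing this last estimate is the main obstacle and is exactly where the hypothesis on $\psi_c$ enters. Since $\psi_c$ does not decay in $y$, it belongs to no isotropic Sobolev space and the product $\psi_c v$ is out of reach of the usual product rules. Instead I would use the assumption that $(1-\partial_x^2-\partial_y^2)^{\sigma/2}\psi_c\in L^2_xL^\infty_y(\R^2)$ for every $\sigma\ge 0$, i.e. that $\psi_c$ is smooth with all derivatives in $L^2_xL^\infty_y$. The key elementary bound is
\be
\|\psi_c\, w\|_{L^2(\R^2)}\le \|\psi_c\|_{L^2_xL^\infty_y}\,\|w\|_{L^\infty_xL^2_y},
\ee
which shows that multiplication by $\psi_c$ trades the non-decay of the background against the transverse integrability of the perturbation. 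Combining this type of inequality (and its differentiated versions) with the mixed-norm and Strichartz bounds for the KP-II propagator already built into $X^{s,b}_T$, one controls $\partial_x(\psi_c v)$ as required; the arbitrary-order control of $\psi_c$ lets the argument close for every integer $s\ge 0$ after commuting $\partial_x^\alpha\partial_y^\beta$, $\alpha+\beta\le s$, through the equation.

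Finally I would globalize by propagating a priori bounds with Gronwall's inequality. Testing the equation for $v$ against $v$, the dispersive and the cubic contributions vanish and one is left with
\be
\frac12\,\frac{d}{dt}\,\|v\|_{L^2(\R^2)}^2=-\frac12\int_{\R^2}\psi_{c,x}\,v^2 ,
\ee
whence $\frac{d}{dt}\|v\|_{L^2}^2\le\|\psi_{c,x}\|_{L^\infty}\|v\|_{L^2}^2$ and $\|v(t)\|_{L^2}\le e^{C|t|}\|\phi\|_{L^2}$. This bound is finite on every compact time interval, so it precludes finite-time blow-up and upgrades the local $L^2$ solution to a global one. For a positive integer $s$ the same scheme applies to $\partial_x^\alpha\partial_y^\beta v$ with $\alpha+\beta\le s$: the commutators coming from $\partial_x(\psi_c v)$ and from $vv_x$ are estimated, through Kato--Ponce-type inequalities and the smoothness of $\psi_c$, by $C(\psi_c,\|v\|_{H^s})\|v\|_{H^s}^2$, so Gronwall again yields an $H^s$ bound on any time interval and hence a global solution. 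Uniqueness in the class $\psi_c+X_T$ and the stated continuous dependence follow by applying the same bilinear and linear estimates to the difference of two solutions.
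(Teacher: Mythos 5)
Your proposal is correct and follows essentially the same route as the paper: subtract the background, run a contraction in Bourgain spaces where the nonlinearity is handled by the KP-II bilinear estimate and the potential term $\partial_x(\psi_c v)$ by the duality $\|\psi_c w\|_{L^2}\le\|\psi_c\|_{L^2_xL^\infty_y}\|w\|_{L^\infty_xL^2_y}$ combined with the Kato smoothing bound $\|v_x\|_{L^\infty_xL^2_{ty}}\lesssim\|v\|_{X^{b,0,0}}$, and globalize via the identity $\frac12\frac{d}{dt}\int v^2=-\frac12\int\partial_x(\psi_c)v^2$ and Gronwall. The only cosmetic difference is that the paper passes to the moving frame so that the coefficient $\psi_c$ is time-independent, and globalizes at the $H^s$ level through tame estimates (local time depending only on the $L^2$ norm) rather than through commutator bounds, but these are variants of the same argument.
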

%%%%%%%%%%%%%%%%%%%%%%%%%%%%%%%%%%%%%%%%
The spaces $X_T$ involved in the statements of the above results are suitable Bourgain spaces defined below.

Our proof is based on the approach by Bourgain to study the fully periodic case. We need however to deal with  difficulties linked to several low frequencies cases 
(see for example Lemma~\ref{lemme3} below) which do not occur in the purely periodic setting. Moreover in the proof of Theorem~\ref{theo2} one needs to
make a use of the Kato type smoothing effect for KP-II which was not present in previous works on the low regularity well-posedness of the KP-II equation.

\begin{rem}\label{Nsol}
As was previously noticed, the result of Theorem \ref{theo2} holds (with the same proof) when we replace  $\psi_c$  by the value at $t=0$ of the N-soliton solution $S_N$ of the KdV equation which satisfies similar smoothness and decay properties as $\psi_c$. This follows from the structure of the $S_N$. For instance (see \cite{DJ}) the 2-soliton of the KdV equation written on the form
$$u_t-6uu_x+u_{xxx}=0$$
is 
$$
S_2(x,t)=-12\frac{3+4\cosh (2x-8t)+\cosh (4x-64t)}
{\big(3\cosh (x-28t)+\cosh (3x-36t)\big)^2}.
$$
On the other hand, one cannot take instead of  $\psi_c$  a  function $\psi$ which is non-decaying along a line $\lbrace (x,y)\vert x-vy=x_0\rbrace,$ as for instance the line-soliton of the KP-II equation which writes $\psi (x-vy-c t).$ However, the change of variables  
$(X=x-vy, Y=y)$ transforms the KP-II equation into 
$$u_t-2vu_Y+v^2u_X+u_{XXX}+uu_X+\partial_X^{-1}u_{YY}=0$$
and probably our analysis applies to this equation with an initial data  which is  a localized (in $(X,Y)$) perturbation of  $\psi(X)$, at least for suitable values of $(c,v)$.
\end{rem}
%%%%%%%%%%%%%%%%%%%%%%%%%%%%%%
\subsection{Organization of the paper}
The second Section is devoted to the proof of a bilinear estimate for localized functions in $\R^2\times \Z.$ It is based on Bourgain's method in \cite{Bo3} who considers the case of functions in $\R\times\Z^2.$ We prove Theorem \ref{theo1} in Section 3 as a consequence of two bilinear estimates in Bourgain $X^{b,s}$ spaces. Finally we prove Theorem \ref{theo2} in Section 4 by a fixed point argument in suitable Bourgain spaces,  using in a crucial way the dispersive and smoothing properties of the KP-II linear group.
%%%%%%%%%%%%%%%
\subsection{Notations}
We will denote $\|\cdot\|_{L^p}$ (resp $\|\cdot\|_{H^s}$) the standard  
norm in the Lebesgue space $L^p$ (resp. the Sobolev space $H^s$), the domain being clear from the context. 
The Fourier transform on $\R^2_{x,y}\times\R_t$ (resp. $\R_x$) is denoted $\mathcal F$ or \;$\widehat{}$\; (resp. $\mathcal F_x$).
We will use the "Japanese bracket notation" $\langle \cdot\rangle =(1+|\cdot|^2)^{1/2}.$
The notation $U(t)$ will stand for the (unitary in all $H^s(\R^2)$ or $H^s(\R\times \T)$ Sobolev spaces) KP-II group, 
that is $U(t)= e^{-t(\partial_x^3+\partial_x^{-1}\partial_{y}^2)}$.
For $(b,s)\in \R\times \R,$ the norm of the Bourgain space associated to $U(t)$ is, for functions defined on $\R^2_{x,y}\times \R_t$ (with the obvious  modification that integration in $\eta \in \R$ is replaced by summation in $q\in \Z$ for functions defined on $\R_x\times\T_y\times\R_t$):
$ \|u\|_{X^{b,b_1,s}}=\|U(-t)u\|_{H^{b,b_1,s}},$ where 
$$
\|u\|^2_{H^{b,b_1,s}}=\int _{\R_{\xi,\eta}^2\times\R_{\tau}}
\Big\langle\frac{\langle \tau  \rangle}{\langle \xi \rangle^3}\Big\rangle^{2b_1}
\langle\zeta\rangle^{2s} \langle \tau\rangle^{2b} |\hat{u}(\tau, \xi,\eta)|^2d\tau d\xi d\eta,\; 
\zeta=(\xi,\eta)$$
that is 
$$\|u\|^2_{X^{b,b_1,s}}=
 \int _{\R_{\xi,\eta}^2\times\R_{\tau}}
\Big\langle\frac{\langle \sigma(\tau,\xi,\eta)  \rangle}{\langle \xi \rangle^3}\Big\rangle^{2b_1}
 \langle\zeta\rangle^{2s}\langle\sigma(\tau,\xi,\eta)\rangle^{2b}  |\hat{u}(\tau, \xi,\eta)|^2d\tau d\xi d\eta,
$$
where $\sigma(\tau,\xi, \eta)= \tau-\xi^3+\eta^2/\xi$.
For any $T>0,$  the norm in the localized version  $X^{b,b_1,s}_T$ is defined as 
$$
\|u\|_{X^{b,b_1,s}_T}= \inf \lbrace \|w\|_{X^{b,b_1,s}},\;w(t)=u(t) \;\text{on}\; (-T, T)\rbrace.
$$
For $(b,s)\in \R\times\R$ the space $Z^{b,s}$ is the space associated to the norm
$$
\|u\|_{Z^{b,s}}=\|\langle \sigma\rangle^{b-\frac{1}{2}}\langle\zeta\rangle^s\widehat{u}\|_{L^2_{\zeta}L^1_{\tau}}.
$$
We define the restricted spaces $Z^{b,s}_T$ similarly to above.
The notation $\lesssim$ is used for $\leq C$ where $C$ is  an irrelevant constant.  For a real number $s,$ $s-$ means "for  any $r<s$ close enough to $s$". 
If $A, B\in \R,$ we denote $A\wedge B =\min (A,B)$ and $A\vee B=\max (A,B).$ $A\sim B$ means that there exists $c>0$ such that $\frac{1}{c}|A|\leq |B|\leq c|A|.$
If $S$ is a Lebesgue measurable set in $\R^n$, $|S|,$  or ${\rm mes}\; S,$ denotes its Lebesgue measure. 
$\#S$ denotes the cardinal of a finite set $S$.
%%%%%%%%%%%%%%%%%%%%%%%%%
\section{A  bilinear estimate for localized functions in $ \R^2\times\Z$}
In this section we will prove the following crucial bilinear estimate for functions having some  localizations related to the KP-II dispersion relation 
in $ \R^2\times \Z $. 
This is essentially a bilinear $L^4-L^2$ Strichartz estimate associated to the linear KP-II group.
We mainly follow the proof of Bourgain \cite{Bo3} that treats the case of functions in $\R\times \Z^2 $.
 \begin{pro} \label{propo1}
Let $ u_1 $ and $ u_2 $ be two real valued $ L^2 $ functions defined on $ \R\times \R\times \Z $ with the following support properties
\begin{equation*}
(\tau,\xi,q)\in {\rm supp}\, (u_i) \Rightarrow \xi\in I_i , \, |\xi|\sim M_i ,
\langle \tau-\xi^3+q^2/\xi \rangle \sim K_i , \, i=1,2.
\end{equation*}
Then the following estimates hold ,
\begin{multline} \label{pro1est1}
\| u_1\star u_2\|_{L^2} \lesssim  (K_1\wedge K_2)^{1/2}(|I_1|\wedge |I_2|)^{1/2}
\\
\langle(K_1\vee K_2)^{1/4} (M_1\wedge M_2)^{1/4}\rangle \| u_1\|_{L^2}  \|  u_2\|_{L^2}
\end{multline}
and if $ M_1\wedge M_2 \ge 1 $  then
\begin{multline}\label{pro1est2}
  \| u_1\star u_2\|_{L^2} \lesssim (K_1\wedge K_2)^{1/2}(K_1\vee K_2)^{1/4}
   (M_1\wedge M_2)^{1/4}
   \\
  \Bigl[ (M_1\wedge M_2)^{1/4} \left\langle
  \frac{(K_1\vee K_2) }{(M_1\vee M_2)^{1-} }\right\rangle
  +\frac{ (|I_1|\wedge |I_2|)^{1/2}}{(K_1\vee K_2)^{1/4}}\Bigr]
  \| u_1\|_{L^2}   \| u_2\|_{L^2} \, .
\end{multline}
\end{pro}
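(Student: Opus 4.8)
The plan is to reduce the bilinear estimate to a purely measure-theoretic counting problem and then carry out that count by exploiting the algebraic structure of the KP-II resonance function. Writing $\zeta=(\tau,\xi,q)$, $\zeta_1=(\tau_1,\xi_1,q_1)$, and $\sigma_i=\tau_i-\xi_i^3+q_i^2/\xi_i$, one has $(u_1\star u_2)(\zeta)=\sum_{q_1}\int u_1(\zeta_1)\,u_2(\zeta-\zeta_1)\,d\tau_1\,d\xi_1$, and a pointwise Cauchy--Schwarz in the convolution variables gives $|(u_1\star u_2)(\zeta)|^2\le |E(\zeta)|\,\bigl(\sum_{q_1}\int|u_1(\zeta_1)|^2|u_2(\zeta-\zeta_1)|^2\,d\tau_1\,d\xi_1\bigr)$, where $E(\zeta)$ is the set of $\zeta_1$ for which both $u_1(\zeta_1)$ and $u_2(\zeta-\zeta_1)$ are supported. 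Integrating in $\zeta$ and using Fubini, $\|u_1\star u_2\|_{L^2}^2\le \bigl(\sup_\zeta|E(\zeta)|\bigr)\,\|u_1\|_{L^2}^2\|u_2\|_{L^2}^2$, so everything reduces to bounding $\sup_\zeta|E(\zeta)|$. Since the constraints $\langle\sigma_1\rangle\sim K_1$ and $\langle\sigma_2\rangle\sim K_2$ each confine $\tau_1$ to a set of measure $\lesssim K_1$, resp. $\lesssim K_2$, integrating out $\tau_1$ first yields $|E(\zeta)|\lesssim (K_1\wedge K_2)\,N(\zeta)$, where $N(\zeta)$ is the measure (Lebesgue in $\xi_1$, counting in $q_1$) of the admissible set of $(\xi_1,q_1)$.

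The next step is to pin down the compatibility constraint on $(\xi_1,q_1)$. For given $\xi_1,q_1$ the two $\tau_1$-intervals forced by $\langle\sigma_1\rangle\sim K_1$ and $\langle\sigma_2\rangle\sim K_2$ overlap only if their centres lie within $O(K_1\vee K_2)$ of one another, which forces $|G(\xi_1,q_1)|\lesssim K_1\vee K_2$ with $G(\xi_1,q_1)=\xi_1^3+(\xi-\xi_1)^3-\frac{q_1^2}{\xi_1}-\frac{(q-q_1)^2}{\xi-\xi_1}-\tau$. Writing $\xi_2=\xi-\xi_1$, $q_2=q-q_1$ and using the identities $\xi_1^3+\xi_2^3=\xi^3-3\xi\xi_1\xi_2$ and $\frac{q_1^2}{\xi_1}+\frac{q_2^2}{\xi_2}-\frac{q^2}{\xi}=\frac{(q_1\xi-q\xi_1)^2}{\xi_1\xi_2\xi}$, the function $G$ takes a manageable polynomial-plus-rational form whose first and second derivatives control the count.

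For estimate \eqref{pro1est1} I would count $q_1$ for fixed $\xi_1$: since $\partial_{q_1}^2G=-2\xi/(\xi_1\xi_2)$, the map $q_1\mapsto G(\xi_1,q_1)$ is a parabola of curvature $\sim |\xi|/(M_1M_2)\sim 1/(M_1\wedge M_2)$ in the generic regime $|\xi|\sim M_1\vee M_2$, so the set $\{|G|\lesssim K_1\vee K_2\}$ contains $\lesssim \langle(K_1\vee K_2)^{1/2}(M_1\wedge M_2)^{1/2}\rangle$ integers. Integrating over $\xi_1$ in its support of length $\lesssim|I_1|\wedge|I_2|$ gives $N\lesssim (|I_1|\wedge|I_2|)\langle(K_1\vee K_2)^{1/2}(M_1\wedge M_2)^{1/2}\rangle$, which combined with $|E|\lesssim(K_1\wedge K_2)N$ and the Cauchy--Schwarz reduction is exactly \eqref{pro1est1}.

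The refined bound \eqref{pro1est2}, valid when $M_1\wedge M_2\ge1$, I would obtain by additionally exploiting the $\xi_1$-derivative $\partial_{\xi_1}G=3(\xi_1^2-\xi_2^2)+q_1^2/\xi_1^2-q_2^2/\xi_2^2$ together with a dyadic decomposition (in $q_1$ or in the size of $\partial_{\xi_1}G$): on the range where this derivative is large one bounds the $\xi_1$-measure by $(K_1\vee K_2)/|\partial_{\xi_1}G|$, producing the first bracket term $(M_1\wedge M_2)^{1/4}\langle(K_1\vee K_2)/(M_1\vee M_2)^{1-}\rangle$, while the complementary near-critical range, where one falls back on the raw $\xi_1$-support, yields the second term $(|I_1|\wedge|I_2|)^{1/2}/(K_1\vee K_2)^{1/4}$. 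The main obstacle, and the place where genuine work is required, is precisely the degenerate geometry: when the output frequency $|\xi|$ is small (so the $q_1$-curvature $|\xi|/(M_1M_2)$ degenerates) or when $\xi_1,\xi_2$ have opposite signs and $\partial_{\xi_1}G$ admits a critical point, the clean derivative bounds fail and one must split into several low-frequency cases according to the relative sizes of $M_1,M_2,K_1,K_2$ — this is the analogue of the separate treatment flagged in Lemma~\ref{lemme3}, and the epsilon-loss encoded in $(M_1\vee M_2)^{1-}$ is exactly what makes the resulting dyadic sums converge.
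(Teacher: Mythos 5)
Your overall strategy is exactly the paper's: Cauchy--Schwarz in the convolution variables, integration in $\tau_1$ to harvest the factor $(K_1\wedge K_2)$, and reduction to counting the set of $(\xi_1,q_1)$ on which $3\xi\xi_1(\xi-\xi_1)+\frac{(\xi q_1-\xi_1 q)^2}{\xi\xi_1(\xi-\xi_1)}$ lies in a window of length $\lesssim K_1\vee K_2$, handled by the elementary counting lemmas for monotone and quadratic functions. There are, however, two genuine gaps. First, your curvature computation $|\partial_{q_1}^2G|=2|\xi|/|\xi_1(\xi-\xi_1)|\sim 1/(M_1\wedge M_2)$ is valid only when $\xi_1$ and $\xi-\xi_1$ have the same sign, so that $|\xi|\sim M_1\vee M_2$. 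When they have opposite signs (which forces $M_1\sim M_2=:M$ while $|\xi|$ may be $\ll M$), the curvature is $|\xi|/M^2$ and the $q_1$-count degenerates to $(K_1\vee K_2)^{1/2}M/|\xi|^{1/2}$, which is not $\lesssim (K_1\vee K_2)^{1/2}(M_1\wedge M_2)^{1/2}$; so even \eqref{pro1est1} is not established by your argument as written. You park this under ``degenerate geometry'' to be handled later, but it must be dispatched at the outset: the paper reduces to $\xi\ge 0$ on both supports by the standard decomposition of each $u_j$ into positive- and negative-frequency parts and a duality/reflection argument (the weight $\langle\tau-\xi^3+q^2/\xi\rangle$ is invariant under $(\tau,\xi,q)\mapsto(-\tau,-\xi,-q)$), which exchanges the roles of one input and the output; this reduction is precisely why the subsequent corollaries carry the extra localization $|\xi|\sim M$ and the factor $M_1\wedge M_2\wedge M$. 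Without it the opposite-sign interactions are simply not covered.

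Second, for \eqref{pro1est2} you offer a program rather than a proof, and you explicitly defer the part where, in your own words, ``genuine work is required''. The paper's decomposition is not in $|\partial_{\xi_1}G|$ but in $L:=\bigl|\frac{q_1}{\xi_1}-\frac{q-q_1}{\xi-\xi_1}\bigr|$, which does double duty: $2L$ is the $q_1$-derivative of the phase, giving the section count $(K_1\vee K_2)/L+1$ via Lemma \ref{le2}, while $\frac{\xi_1(\xi-\xi_1)}{\xi}L^2\sim (M_1\wedge M_2)L^2$ measures the deviation of the phase from the pure quadratic $3\xi\xi_1(\xi-\xi_1)$, giving the $\xi_1$-projection bound via Lemma \ref{le3}. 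The $\varepsilon$-loss encoded in $(M_1\vee M_2)^{1-}$ then arises from interpolating that projection bound with the crude bound $M_1\wedge M_2$ so that the dyadic sum over $L$ converges, and the term $(|I_1|\wedge|I_2|)^{1/2}/(K_1\vee K_2)^{1/4}$ comes from the extreme region $L\ge K_1\vee K_2$, where the $q_1$-section has $O(1)$ elements and only the raw $\xi_1$-support is used --- i.e.\ from the region of \emph{very large} $q_1$-derivative, not from a near-critical range as you suggest. Since none of these steps (the five-fold case splitting, the interpolation, the summation over $L$) is carried out, the second estimate remains unproven in your proposal.
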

 \subsection{Three basic lemmas}
 Before starting the proof of the proposition let us recall the three following basic lemmas that we will use intensively.
\begin{lem}\label{le1}
Consider a set $ \Lambda \subset \R_\xi\times \Z_q $. Let the
projection of $ \Lambda $ on the $ \xi $ axis be contained in a
set $ I\subset \R $. Assume in addition that for any fixed $ \xi_0
\in I $ the cardinal of the set $ \Lambda\cap \{(\xi_0,q), q\in
\Z\} $ is bounded by a constant $ C $. Then $ |\Lambda |\le C |I
|$.
\end{lem}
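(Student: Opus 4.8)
The plan is to compute $|\Lambda|$ directly by slicing in the $\xi$ variable and applying the Tonelli theorem. Here $|\Lambda|$ is understood as the measure of $\Lambda$ with respect to the product of Lebesgue measure $d\xi$ on $\R_\xi$ with the counting measure on $\Z_q$, which is the natural measure in which the bilinear estimate of Proposition~\ref{propo1} is phrased.

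First I would introduce, for each fixed $\xi\in\R$, the fiber $\Lambda_\xi=\{q\in\Z:(\xi,q)\in\Lambda\}$, whose cardinality is exactly the quantity controlled by the hypothesis. Applying Tonelli's theorem to the nonnegative function $\mathbf{1}_\Lambda$ on the product space $\R_\xi\times\Z_q$ gives
$$
|\Lambda|=\int_{\R_\xi}\Big(\sum_{q\in\Z}\mathbf{1}_\Lambda(\xi,q)\Big)\,d\xi=\int_{\R_\xi}\#\Lambda_\xi\,d\xi.
$$
Next I would invoke the two assumptions. Since the projection of $\Lambda$ onto the $\xi$-axis lies in $I$, the fiber $\Lambda_\xi$ is empty whenever $\xi\notin I$, so the integral reduces to one over $I$; and the fiber-cardinality hypothesis gives $\#\Lambda_\xi\le C$ for every $\xi\in I$. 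Combining these two facts yields
$$
|\Lambda|=\int_I\#\Lambda_\xi\,d\xi\le\int_I C\,d\xi=C\,|I|,
$$
which is the desired bound.

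There is no genuine obstacle here: the estimate is a one-line consequence of Tonelli once the set-up is fixed. The only point deserving a word of care is measurability, namely that $\xi\mapsto\#\Lambda_\xi$ is a measurable function so that the integral makes sense and Tonelli applies. This is automatic as soon as $\Lambda$ is measurable for the product measure, since $\#\Lambda_\xi=\sum_{q\in\Z}\mathbf{1}_\Lambda(\xi,q)$ is then a countable sum of measurable functions. In all the applications below, $\Lambda$ arises as a sublevel set cut out by the KP-II dispersion relation (hence is measurable), so this causes no difficulty, and I would mention it only in passing.
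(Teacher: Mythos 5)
Your proof is correct, and it matches the intended justification: the paper simply \emph{recalls} this lemma as a basic fact without proof, and the Tonelli slicing argument over the product of Lebesgue measure in $\xi$ and counting measure in $q$ is exactly the canonical way to verify it. Your passing remark on measurability (and the implicit convention that $|\Lambda|$ denotes this product measure, which is indeed how the lemma is used to bound $|B_{\tau,\xi,q}|$ later) is the only point requiring care, and you handle it appropriately.
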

\begin{lem}\label{le2}
Let $ I $ and $ J $ be two intervals on the real line and $ f \,
:\, J\mapsto \R $ be a smooth function. Then
$$
{\rm mes} \{x\in J \, :\, f(x)\in I \} \le \frac{|I|}{\inf_{\xi\in J}
|f'(\xi)|}
$$
and
$$
\#\{q\in J\cap \Z \, :\, f(q)\in I \} \le \frac{|I|}{\inf_{\xi\in
J} |f'(\xi)|}+1\; .
$$
\end{lem}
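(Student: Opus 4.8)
The plan is to derive both bounds from the monotonicity forced by a nonvanishing derivative. Set $m=\inf_{\xi\in J}|f'(\xi)|$. If $m=0$ the right-hand sides are $+\infty$ (interpreting $|I|/m+1$ as $+\infty$) and there is nothing to prove, so I assume $m>0$. Then $f'$ never vanishes on the interval $J$; since $f$ is smooth, $f'$ is continuous, and the intermediate value theorem forces $f'$ to keep a constant sign throughout $J$. Hence $f$ is strictly monotone on $J$, and for all $a,b\in J$ with $a\le b$ the fundamental theorem of calculus gives
$$
|f(b)-f(a)|=\Big|\int_a^b f'(x)\,dx\Big|=\int_a^b|f'(x)|\,dx\ge m\,(b-a),
$$
where the middle equality uses that $f'$ has constant sign.

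For the measure estimate I would first observe that the set $E:=\{x\in J:\ f(x)\in I\}=J\cap f^{-1}(I)$ is again an interval (if $E=\emptyset$ both bounds are trivial). Indeed, if $x_1<x<x_2$ with $x_1,x_2\in E$, then by strict monotonicity $f(x)$ lies strictly between $f(x_1)$ and $f(x_2)$, hence inside the interval $I$, so $x\in E$. Taking $a,b\in\bar E$ to be the endpoints of $E$, the images $f(a),f(b)$ belong to $\bar I$ by continuity, so $|f(b)-f(a)|\le|I|$; combining this with the lower bound above yields $m\,{\rm mes}(E)=m\,(b-a)\le|I|$, i.e. ${\rm mes}(E)\le|I|/m$, which is the first inequality.

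The counting estimate then follows at once. Every integer $q\in J$ with $f(q)\in I$ belongs to the interval $E$, whose length is at most $|I|/m$. Since an interval of length $L$ contains at most $L+1$ integers, we conclude $\#\{q\in J\cap\Z:\ f(q)\in I\}\le|I|/m+1$.

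There is no genuine obstacle here: the only points requiring a little care are the reduction to the case $m>0$ (so that the quotient $|I|/m$ makes sense) and the verification that $E$ is an interval, which is exactly where the constant sign of $f'$, and hence the strict monotonicity of $f$, is used.
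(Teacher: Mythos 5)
Your proof is correct. Note that the paper itself offers no proof of Lemma~\ref{le2}: it is stated as one of three ``basic lemmas'' recalled before the proof of Proposition~\ref{propo1} (these are standard facts going back to Bourgain's paper \cite{Bo3}), so there is no argument in the paper to compare against; the monotonicity argument you give is exactly the expected one. One streamlining remark: the detour through showing that $E=J\cap f^{-1}(I)$ is an interval, and the evaluation of $f$ at the endpoints $a,b\in\bar E$ --- which is slightly delicate, since such an endpoint may be an endpoint of $J$ not belonging to $J$, where $f$ is undefined, so strictly speaking you need a limiting argument there --- can be avoided altogether. For any $a,b\in E$ with $a\le b$ you already have $m(b-a)\le |f(b)-f(a)|\le |I|$, the second inequality because $f(a)$ and $f(b)$ both lie in the interval $I$; taking the supremum over $a,b\in E$ gives ${\rm diam}\, E\le |I|/m$, and since ${\rm mes}\, E\le {\rm diam}\, E$ for any subset of $\R$, and an interval of length $L$ contains at most $L+1$ integers, both asserted bounds follow without ever knowing that $E$ is an interval. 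This removes the only fragile step in your write-up while keeping its substance unchanged.
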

\begin{lem}\label{le3}
Let $ a\neq 0 $, $b$, $c$ be real numbers and $ I $ be an
interval on the real line. Then
$$
{\rm mes} \{x\in \R \, : \, ax^2+bx +c\in I \} \le
\frac{|I|^{1/2}}{|a|^{1/2}}
$$
and
$$
\#\{q\in  \Z \, :\, \, aq^2+bq +c\in I \} \le
\frac{|I|^{1/2}}{|a|^{1/2}}+1 \; .
$$
\end{lem}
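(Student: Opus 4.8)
The plan is to strip off the linear term by completing the square and then to read off both bounds from the elementary geometry of the map $t\mapsto t^2$. First I would observe that replacing $(a,b,c,I)$ by $(-a,-b,-c,-I)$ changes neither $|a|$, nor $|I|$, nor either of the two sets whose size we must control, so I may assume $a>0$. Writing
$$ax^2+bx+c=a\Big(x+\frac{b}{2a}\Big)^2+\Big(c-\frac{b^2}{4a}\Big)$$
and setting $t=x+\frac{b}{2a}$, the condition $ax^2+bx+c\in I$ becomes $t^2\in J$, where $J=a^{-1}\big(I-(c-\tfrac{b^2}{4a})\big)$ is an interval of length $|J|=|I|/|a|$. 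Thus both statements reduce to estimating the size of $\{t\in\R:\,t^2\in J\}$.

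For the measure bound I would write $J=[\alpha,\beta]$. If $\beta<0$ the set is empty; otherwise $\{t\ge 0:\,t^2\in J\}=[\sqrt{\alpha\vee 0},\sqrt{\beta}]$, whose length is $\sqrt{\beta}-\sqrt{\alpha\vee 0}\le\sqrt{\beta-(\alpha\vee 0)}\le\sqrt{|J|}$, using $\beta-(\alpha\vee0)\le|J|$ together with the elementary inequality $\sqrt{p}-\sqrt{q}\le\sqrt{p-q}$ valid for $p\ge q\ge 0$. Adding the symmetric contribution of $\{t\le 0\}$ gives ${\rm mes}\{t:\,t^2\in J\}\lesssim\sqrt{|J|}=|I|^{1/2}/|a|^{1/2}$, which yields the first inequality (up to the absolute constant coming from the two signs of $t$).

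For the integer count I would note that under the same shift the integers $q$ are carried to a translate of $\Z$, and that any interval of length $\ell$ meets such a translate in at most $\ell+1$ points. Since $a>0$, the set $\{t:\,t^2\in J\}$ is empty, a single interval, or (when $\alpha>0$) a symmetric pair of intervals, and in every case it decomposes into at most two monotone branches each of length $\le\sqrt{|J|}$ exactly as above; counting lattice points on each branch and summing produces the bound $|I|^{1/2}/|a|^{1/2}+1$ up to the harmless constants contributed by the (at most two) branches. The only delicate feature of the problem is the vertex $x=-b/(2a)$, where $f'(x)=2ax+b$ vanishes: this degeneracy is precisely why one cannot feed $f(x)=ax^2+bx+c$ directly into Lemma~\ref{le2} over all of $\R$ (the resulting right-hand side would be infinite), and it forces one to exploit the exact quadratic profile through the substitution $t^2\in J$. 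Everything else is elementary square-root bookkeeping, so I anticipate no genuine obstacle.
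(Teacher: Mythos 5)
Your argument is correct, and there is nothing in the paper to compare it against: Lemma~\ref{le3} is one of three ``basic lemmas'' that the authors merely \emph{recall} without proof, so your completing-the-square reduction to $\{t:\,t^2\in J\}$ with $|J|=|I|/|a|$ is exactly the standard argument the paper implicitly relies on. One point deserves to be made explicit rather than left in your parentheticals: the lemma \emph{as literally stated}, with constant $1$, is false, and your proof correctly detects this. Taking $a=1$, $b=c=0$, $I=[0,1]$ gives ${\rm mes}\{x:\,x^2\in[0,1]\}=2>|I|^{1/2}/|a|^{1/2}=1$, and $I=[0,N^2]$ gives $\#\{q\in\Z:\,q^2\le N^2\}=2N+1>N+1$; the sharp statements carry an extra factor $2$ coming precisely from the two monotone branches of the parabola that you identify (e.g.\ ${\rm mes}\le 2|I|^{1/2}/|a|^{1/2}$, with equality when $J=[0,\beta]$). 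This discrepancy is harmless for the paper, since every application of Lemma~\ref{le3} there occurs inside a $\lesssim$ bound, but your hedge ``up to the absolute constant coming from the two signs of $t$'' is not a gap in your proof --- it is the honest form of the lemma. Your side remark is also accurate: Lemma~\ref{le2} cannot be applied to $f(x)=ax^2+bx+c$ globally (nor even on each monotone half-line, since $\inf|f'|=0$ up to the vertex), which is why the exact substitution $t^2\in J$ together with $\sqrt{p}-\sqrt{q}\le\sqrt{p-q}$ for $p\ge q\ge 0$ is the right mechanism; all remaining steps (the shift carrying $\Z$ to a translate of $\Z$, and the count $\le\ell+1$ of lattice points in an interval of length $\ell$) are sound.
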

\subsection{Proof of  Proposition \ref{propo1}}
 According to [\cite{Bo3},p. 320], we can suppose that $
\xi \ge 0 $  on the support of $ u_j(\tau,\xi,q) $ (see also
[\cite{ST},p. 460] for a detailed discussion). We thus have to
evaluate
$$
\sum_{q\in\Z} \int_{\R\times \R_+} \Bigl| \sum_{q_1\in \Z}
\int_{\R\times \R_+} \chi_{\{\xi-\xi_1\ge 0\}} u_1(\tau_1, \xi_1,
q_1) u_2(\tau-\tau_1,\xi-\xi_1,q-q_1) d\tau_1 d\xi_1 \Bigr|^2
d\tau\, d\xi
$$
By Cauchy-Schwarz inequality in $ (\tau_1,\xi_1,q_1) $ we thus
get
\begin{equation} \label{eq8}
\| u_1\star u_2\|_{L^2}^2 \lesssim \sup_{(\tau,\xi\ge 0,\eta)}
|A_{\tau,\xi,q}| \|u_1\|_{L^2}^2 \|u_2\|_{L^2}^2
\end{equation}
where $ A_{\tau,\xi,q} \subset \R^2\times \Z $ is the set
 \begin{eqnarray*}
 A_{\tau,\xi,q} &:= & \Bigl\{(\tau_1,\xi_1,q_1): \xi_1\in I_1, \xi-\xi_1\in I_2, \, 
 0<\xi_1 \sim M_1, \, 0<\xi-\xi_1\sim M_2  \\
  & &
 \Bigl\langle \tau_1-\xi_1^3+\frac{q_1^2}{\xi_1}
  \Bigr\rangle\sim K_1,   \Bigl\langle
  \tau-\tau_1-(\xi-\xi_1)^3+\frac{(q-q_1)^2}{\xi-\xi_1} \Bigr\rangle\sim
  K_2\Bigr\} \; .
 \end{eqnarray*}
 A  use of the triangle inequality yields
 \begin{equation} \label{eq9}
 |A_{\tau,\xi,q} |\lesssim (K_1\wedge K_2) |B_{\tau,\xi,q}|
 \end{equation}
 where
\begin{eqnarray*}
 B_{\tau,\xi,q} &:= & \Bigl\{(\xi_1,q_1):  \xi_1\in I_1, \xi-\xi_1\in I_2, \, 
 0<\xi_1 \sim M_1, \, 0<\xi-\xi_1\sim M_2  \\
  & &
 \Bigl\langle \tau-\xi^3+\frac{q^2}{\xi} +3\xi \xi_1 (\xi-\xi_1)
 +\frac{(\xi q_1-\xi_1 q)^2}{\xi \xi_1(\xi-\xi_1)}
  \Bigr\rangle\lesssim (K_1\vee K_2)  \Bigr\} \; .
 \end{eqnarray*}
 Let us  now first prove (\ref{pro1est1}) by bounding
$|B_{\tau,\xi,q}|$ in a direct way. The measure of the projection
of $B_{\tau,\xi,q} $ on the $ \xi_1 $-axis is bounded by
 $ (|I_1|\wedge |I_2) $ and for a fixed $ \xi_1 $, using Lemma \ref{le3}, the cardinal of its  $ q_1 $-section is bounded by
 $ c(M_1\wedge M_2)^{1/2} (K_1\vee K_2)^{1/2}+1$.  Therefore Lemma \ref{le1} and (\ref{eq8})-(\ref{eq9}) yield the bound \eqref{pro1est1}.
 
To prove (\ref{pro1est2}) we  divide $ B_{\tau,\xi,q} $ into two regions by setting
\begin{equation}
B^1_{\tau,\xi,q}:=\{(\xi_1,q_1)\in B_{\tau,\xi,q}  \,:\,
\Big|\frac{q_1}{\xi_1}-\frac{q-q_1}{\xi-\xi_1}\Big|\le 1  \}
\end{equation}
and
\begin{equation}
B^2_{\tau,\xi,q}:=\{(\xi_1,q_1)\in B_{\tau,\xi,q}  \,:\,
\Big|\frac{q_1}{\xi_1}-\frac{q-q_1}{\xi-\xi_1}\Big|\ge 1  \}
\end{equation}
%%%%%%%%%
In the next lemma we estimate the size of the first region.
\begin{lem}\label{lem1}
For $M_1\wedge M_2\ge 1$, the following estimate holds 
\begin{equation*}
|B^1_{\tau,\xi,q}| \lesssim (K_1\vee K_2)^{1/2} (M_1\wedge M_2)^{1/2} \,.
\end{equation*}
\end{lem}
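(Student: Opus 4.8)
The plan is to reduce the membership condition defining $B^1$ to a constraint on a single quadratic polynomial in $\xi_1$, and then to count by combining two applications of Lemma \ref{le3} with the width of the slab $|\theta|\le 1$. First I would record the algebraic identity
\[
\frac{q_1}{\xi_1}-\frac{q-q_1}{\xi-\xi_1}=\frac{\xi q_1-\xi_1 q}{\xi_1(\xi-\xi_1)}=:\theta,\qquad \frac{(\xi q_1-\xi_1 q)^2}{\xi\,\xi_1(\xi-\xi_1)}=\frac{\xi_1(\xi-\xi_1)}{\xi}\,\theta^2,
\]
so that the bracket defining $B_{\tau,\xi,q}$ becomes
\[
\Bigl|\,C_0+3\xi\,\xi_1(\xi-\xi_1)+\tfrac{\xi_1(\xi-\xi_1)}{\xi}\,\theta^2\,\Bigr|\lesssim K,\qquad C_0:=\tau-\xi^3+\tfrac{q^2}{\xi},\quad K:=K_1\vee K_2.
\]
On $B^1$ one has $|\theta|\le 1$, and since $\xi_1\sim M_1$, $\xi-\xi_1\sim M_2$, $\xi\sim M_1\vee M_2$, the coefficient satisfies $\frac{\xi_1(\xi-\xi_1)}{\xi}\sim M_1\wedge M_2$; hence on $B^1$ the last term is nonnegative and bounded by $\lesssim M_1\wedge M_2$.

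The key observation I would exploit is that this boundedness decouples the slow variable $\xi_1$ from $q_1$. Since $0\le\frac{\xi_1(\xi-\xi_1)}{\xi}\theta^2\lesssim M_1\wedge M_2$ and $C_0$ is a constant, the displayed inequality forces the quadratic polynomial $3\xi\,\xi_1(\xi-\xi_1)$ to lie in a \emph{fixed} interval of length $\lesssim K+M_1\wedge M_2$. Its leading coefficient is $-3\xi$ with $|3\xi|\sim M_1\vee M_2$, so Lemma \ref{le3} bounds the measure of the $\xi_1$-projection of $B^1$ by $\frac{(K+M_1\wedge M_2)^{1/2}}{(M_1\vee M_2)^{1/2}}$. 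This is precisely the step that removes any dependence on $|I_1|\wedge|I_2|$: rather than integrating a pointwise $q_1$-count over the whole interval $I_1$, the dispersion itself confines $\xi_1$ to a short set. For the $q_1$-section at a fixed $\xi_1$ I keep two bounds: the slab $|\theta|\le 1$ forces $q_1$ into an interval of length $\lesssim\frac{\xi_1(\xi-\xi_1)}{\xi}\lesssim M_1\wedge M_2$, while viewing $\frac{(\xi q_1-\xi_1 q)^2}{\xi\,\xi_1(\xi-\xi_1)}$ as a quadratic in the integer $q_1$ with leading coefficient $\frac{\xi}{\xi_1(\xi-\xi_1)}\sim(M_1\wedge M_2)^{-1}$ and applying Lemma \ref{le3} gives the alternative $\lesssim K^{1/2}(M_1\wedge M_2)^{1/2}+1$. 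Note that $q$ enters only the vertex of this parabola, not its leading coefficient, so it never affects the count. Multiplying the projection measure by the better of the two section bounds (Lemma \ref{le1}) then yields the claim.

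The part I would be most careful about is the final case split in this product, which is where $M_1\wedge M_2\ge1$ and $M_1\wedge M_2\le M_1\vee M_2$ are used. When $K\ge M_1\wedge M_2$ one takes the slab section bound $M_1\wedge M_2$ and uses $(K+M_1\wedge M_2)^{1/2}\sim K^{1/2}$, giving $\frac{K^{1/2}(M_1\wedge M_2)}{(M_1\vee M_2)^{1/2}}\le K^{1/2}(M_1\wedge M_2)^{1/2}$; when $K<M_1\wedge M_2$ one takes the Lemma \ref{le3} section bound and the factor $\frac{(M_1\wedge M_2)^{1/2}}{(M_1\vee M_2)^{1/2}}\le 1$ absorbs the surplus. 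The genuine obstacle is the near-diagonal regime $M_1\sim M_2$ with $M_1\wedge M_2>K$, where the crude slab count is too lossy by a factor $\sim(M_1\wedge M_2)^{1/2}/K^{1/2}$ and one must instead extract the square-root (quadratic) gain in $q_1$ from Lemma \ref{le3}; this is the reason both section estimates have to be retained, and checking that they glue to the single clean bound $(K_1\vee K_2)^{1/2}(M_1\wedge M_2)^{1/2}$ uniformly in all regimes is the delicate point.
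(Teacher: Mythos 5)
Your proof is correct and follows essentially the same route as the paper's: the same identity \eqref{uio}, the same bound on the $\xi_1$-projection via Lemma \ref{le3} applied to $3\xi\xi_1(\xi-\xi_1)$, and the same dichotomy between $K_1\vee K_2$ and $M_1\wedge M_2$ glued with Lemma \ref{le1}. The only (harmless) difference is that where the paper further splits the case $M_1\wedge M_2\ge K_1\vee K_2$ into sub-cases according to the size of $|\theta|$ and counts the $q_1$-section with the first-derivative Lemma \ref{le2} (derivative $2|\theta|$ as in \eqref{lem1proof4}), you get the section bound $(K_1\vee K_2)^{1/2}(M_1\wedge M_2)^{1/2}+1$ in one stroke by applying Lemma \ref{le3} to the quadratic in $q_1$ --- the very count the paper itself uses in proving \eqref{pro1est1}.
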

\begin{proof}
Recall that $ \xi_1 $ and $ \xi-\xi_1 $ are  non
negative real numbers and thus $ \xi \sim \xi_1\vee( \xi-\xi_1)$. Note also that
\begin{equation}\label{uio}
\frac{(\xi   q_1 -\xi_1 q)^2}{\xi \xi_1 (\xi-\xi_1)}=\frac{\xi_1 (\xi-\xi_1)}{\xi} \Bigl( \frac{q_1}{\xi_1}-\frac{q-q_1}{\xi-\xi_1}\Bigr)^2
\end{equation}
 From the definition of   $ B^1_{\tau,\xi,q} $  we  thus deduce that
\begin{equation*}
|\tau-\xi^3+q^2/\xi+3 \xi \xi_1 (\xi-\xi_1)| \lesssim K_1\vee
K_2+M_1\wedge M_2 \; .
 \end{equation*}
According to Lemma \ref{le3},  the projection  of  $
B^1_{\tau,\xi,q} $ on the $ \xi_1 $-axis is thus bounded by
\begin{equation}\label{lem1proof2}
\frac{\Bigl(  K_1\vee K_2+M_1\wedge M_2 \Bigr)^{1/2}}{(M_1\vee M_2)^{1/2}} \lesssim
 \frac{(  K_1\vee K_2)^{1/2}}{(M_1\vee M_2)^{1/2}}+
 \frac{(  M_1\wedge  M_2)^{1/2}}{(M_1\vee M_2)^{1/2}}
 \end{equation}
 We separate two cases : \\
 {\bf 1)} $ M_1\wedge M_2\le K_1\vee K_2 $. Then  from  the definition of $ B^1_{\tau,\xi,q} $ and Lemma \ref{le2} we infer
  that for   $ \xi_1 $ fixed, the cardinal  of the $ q_1 $ section of  $ B^1_{\tau,\xi,q} $ is bounded from above by $ \langle
  M_1\wedge M_2 \rangle \sim M_1\wedge M_2$.   We thus get by Lemma~\ref{le1} that in the considered case
  $$
 | B^1_{\tau,\xi,q}| \lesssim  \frac{(  K_1\vee K_2)^{1/2}( M_1\wedge  M_2) }{(M_1\vee M_2)^{1/2}}
 \leq  (K_1\vee K_2)^{1/2} (M_1\wedge M_2)^{1/2} \; .
 $$
 {\bf 2)} $M_1\wedge M_2\ge K_1\vee K_2 $. In this case we subdivide once more :
 \begin{enumerate}
 \item [2.1)]$ \; \Bigl| \frac{q_1}{\xi_1} -\frac{q-q_1}{\xi-\xi_1} \Bigr| \le \frac{(  K_1\vee K_2)^{1/2}}{(M_1\wedge M_2)^{1/2}} $.
  Then again by (\ref{lem1proof2}), Lemma~\ref{le1}  and Lemma~\ref{le2}, we have that in the considered case
  $$
  |B^1_{\tau,\xi,q}| \lesssim  
  \frac{(  M_1\wedge  M_2)^{1/2}}{(M_1\vee M_2)^{1/2}} 
 \Bigl\langle
 \frac{(  K_1\vee K_2)^{1/2}(  M_1\wedge  M_2)}{(M_1\vee M_2)^{1/2}}\Bigr\rangle \nonumber \\
 \lesssim   (  K_1\vee K_2)^{1/2}(M_1\wedge M_2)^{1/2}
  $$
  \item[2.2)] $ \; 1\ge \Bigl| \frac{q_1}{\xi_1} -\frac{q-q_1}{\xi-\xi_1} \Bigr| \ge \frac{(  K_1\vee K_2)^{1/2}}{(M_1\wedge M_2)^{1/2}} $.
  Since
  \begin{multline}\label{lem1proof4}
  \Bigl|\frac{\partial}{\partial q_1}
 \Big( \tau-\xi^3+q^2/\xi
 +3 \xi \xi_1 (\xi-\xi_1)
 \\
 +\frac{\xi_1(\xi-\xi_1)}{\xi}
 \Big(\frac{q_1}{\xi_1} -\frac{q-q_1}{\xi-\xi_1}\Big)^2\Big)  \Bigr|
 = 2 \Bigl| \frac{q_1}{\xi_1} -\frac{q-q_1}{\xi-\xi_1} \Bigr|
  \end{multline}
  it follows from Lemma~\ref{le2} that the cardinal of the $ q_1 $ section at fixed $ \xi_1 $ is bounded by
\begin{equation*}
  2 (K_1\vee K_2)  \frac{(  M_1\wedge M_2)^{1/2}}{(  K_1\vee  K_2)^{1/2}}+1
 \lesssim  (  K_1\vee K_2)^{1/2}(  M_1\wedge  M_2)^{1/2} \;.
 \end{equation*}
   According to (\ref{lem1proof2}), the projection of this region on the $ \xi_1 $ axis is of measure less than a uniform constant and thus
   in the considered case
   \begin{equation*}
     |B^1_{\tau,\xi,q}|  \lesssim  (  K_1\vee K_2)^{1/2}(  M_1\wedge  M_2)^{1/2}\; .
   \end{equation*}
  \end{enumerate}
This completes the proof of Lemma~\ref{lem1}.
\end{proof}
We now divide $ B^2_{\tau,\xi,q} $ into three regions by setting
\begin{equation*}
B^{2,1}_{\tau,\xi,q}:=\Big\{(\xi_1,q_1)\in B^2_{\tau,\xi,q}  \,:\,
\Big|\frac{q_1}{\xi_1}-\frac{q-q_1}{\xi-\xi_1}\Big|< \frac{(  K_1\vee K_2)^{1/2}}{(  M_1\wedge  M_2)^{1/2}}  \Big\},
\end{equation*}
\begin{equation*}
B^{2,2}_{\tau,\xi,q}  :=  \Big\{(\xi_1,q_1)\in B^2_{\tau,\xi,q} \,:\, \frac{(  K_1\vee K_2)^{1/2}}{(  M_1\wedge  M_2)^{1/2}} \le
\Big|\frac{q_1}{\xi_1}-\frac{q-q_1}{\xi-\xi_1}\Big| <\frac{(  M_1\vee M_2)^{1/2}}{(  M_1\wedge  M_2)^{1/2}}  \Big\}
\end{equation*}
and
\begin{multline*}
B^{2,3}_{\tau,\xi,q}  :=  \Big\{(\xi_1,q_1)\in B^2_{\tau,\xi,q}\,:\,
\\
 \max\Big( \frac{(  K_1\vee K_2)^{1/2}}{( M_1\wedge
M_2)^{1/2}} , \frac{(  M_1\vee M_2)^{1/2}}{(  M_1\wedge  M_2)^{1/2}}  \Big) \le \Big|\frac{q_1}{\xi_1}-\frac{q-q_1}{\xi-\xi_1}\Big|\,   \Big\}.
\end{multline*}
Note that $ B^{2,1}_{\tau,\xi,q} $ and $ B^{2,2}_{\tau,\xi,q} $
may be empty.
\begin{lem}\label{lem2}
The following estimates hold  whenever  $ M_1\wedge M_2 \ge 1 $
\begin{equation}
|B^{2,1}_{\tau,\xi,q}| \lesssim  (K_1\vee K_2)  \frac{(
M_1\wedge  M_2)^{1/2}}{(  M_1\vee  M_2)^{1/2}} 
\label{est1lem2}
\end{equation}
and
\begin{equation}\label{est2lem2}
|B^{2,2}_{\tau,\xi,q}| \lesssim (K_1\vee K_2)^{1/2} (M_1\wedge
M_2)^{1/2}  \; .
\end{equation}
\end{lem}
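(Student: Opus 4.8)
The plan is to establish both bounds by the same three-step scheme already used in the proof of Lemma~\ref{lem1}: first bound the Lebesgue measure of the projection of the region onto the $\xi_1$-axis using Lemma~\ref{le3}, then bound the cardinal of each $q_1$-section at fixed $\xi_1$ using Lemma~\ref{le2}, and finally multiply the two via the counting Lemma~\ref{le1}. Throughout I use that $\xi_1\sim M_1$ and $\xi-\xi_1\sim M_2$ force $\xi\sim M_1\vee M_2$ and $\frac{\xi_1(\xi-\xi_1)}{\xi}\sim M_1\wedge M_2$, together with the identity \eqref{uio}. Writing $\theta:=\frac{q_1}{\xi_1}-\frac{q-q_1}{\xi-\xi_1}$, at fixed $\xi_1$ the map $q_1\mapsto\theta$ is affine with slope $\frac{\xi}{\xi_1(\xi-\xi_1)}\sim(M_1\wedge M_2)^{-1}$, and by \eqref{lem1proof4} the full phase $\Phi:=\tau-\xi^3+q^2/\xi+3\xi\xi_1(\xi-\xi_1)+\frac{\xi_1(\xi-\xi_1)}{\xi}\theta^2$ satisfies $|\partial_{q_1}\Phi|=2|\theta|$.

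For $B^{2,1}_{\tau,\xi,q}$ I would use the upper bound $|\theta|<(K_1\vee K_2)^{1/2}(M_1\wedge M_2)^{-1/2}$, which by \eqref{uio} gives $\frac{\xi_1(\xi-\xi_1)}{\xi}\theta^2\lesssim K_1\vee K_2$, so that on $B_{\tau,\xi,q}$ one has $|\tau-\xi^3+q^2/\xi+3\xi\xi_1(\xi-\xi_1)|\lesssim K_1\vee K_2$. Viewing the left-hand side as a quadratic in $\xi_1$ with leading coefficient $\sim M_1\vee M_2$, Lemma~\ref{le3} bounds the $\xi_1$-projection by $(K_1\vee K_2)^{1/2}(M_1\vee M_2)^{-1/2}$. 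At fixed $\xi_1$, since $\theta$ is confined to an interval of length $\lesssim(K_1\vee K_2)^{1/2}(M_1\wedge M_2)^{-1/2}$ and $q_1\mapsto\theta$ has slope $\sim(M_1\wedge M_2)^{-1}$, Lemma~\ref{le2} bounds the number of admissible $q_1$ by $(K_1\vee K_2)^{1/2}(M_1\wedge M_2)^{1/2}+1$; the $+1$ is absorbed because this quantity is $\ge1$ (both $K_1\vee K_2\ge1$ and $M_1\wedge M_2\ge1$). Lemma~\ref{le1} then yields \eqref{est1lem2}.

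For $B^{2,2}_{\tau,\xi,q}$ the roles of the two bounds on $|\theta|$ switch. The upper constraint $|\theta|<(M_1\vee M_2)^{1/2}(M_1\wedge M_2)^{-1/2}$ gives $\frac{\xi_1(\xi-\xi_1)}{\xi}\theta^2\lesssim M_1\vee M_2$, and nonemptiness of the region forces $(K_1\vee K_2)^{1/2}<(M_1\vee M_2)^{1/2}$, hence $K_1\vee K_2\lesssim M_1\vee M_2$; therefore $|\tau-\xi^3+q^2/\xi+3\xi\xi_1(\xi-\xi_1)|\lesssim M_1\vee M_2$ and Lemma~\ref{le3} bounds the $\xi_1$-projection by $(M_1\vee M_2)^{1/2}(M_1\vee M_2)^{-1/2}\sim1$. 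For the $q_1$-section I would now use the lower constraint: by \eqref{lem1proof4}, $|\partial_{q_1}\Phi|=2|\theta|\ge2(K_1\vee K_2)^{1/2}(M_1\wedge M_2)^{-1/2}$, so Lemma~\ref{le2} bounds the cardinal of the section at fixed $\xi_1$ by $(K_1\vee K_2)^{1/2}(M_1\wedge M_2)^{1/2}+1\lesssim(K_1\vee K_2)^{1/2}(M_1\wedge M_2)^{1/2}$. Lemma~\ref{le1} then gives \eqref{est2lem2}.

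The points demanding care are the choice of which derivative governs the section count on each region — counting $q_1$ directly through the affine map $\theta$ on $B^{2,1}$, but counting through $|\partial_{q_1}\Phi|=2|\theta|$ with the lower bound on $|\theta|$ on $B^{2,2}$, since using the wrong one degrades the estimate — and the repeated bookkeeping of the additive constants in Lemmas~\ref{le2} and~\ref{le3}, which are absorbed only after invoking $M_1\wedge M_2\ge1$, $K_1\vee K_2\ge1$, and the nonemptiness condition for the given sub-region. The relation $\frac{\xi_1(\xi-\xi_1)}{\xi}\sim M_1\wedge M_2$ (with $\xi\sim M_1\vee M_2$) is precisely what makes the threshold exponents in the definitions of $B^{2,1}$ and $B^{2,2}$ line up with the target bounds.
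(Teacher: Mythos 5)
Your proposal is correct and follows essentially the same route as the paper: bound the $\xi_1$-projection of each region via Lemma~\ref{le3} (using the nonemptiness condition $K_1\vee K_2\lesssim M_1\vee M_2$ for $B^{2,2}_{\tau,\xi,q}$), bound the $q_1$-sections via Lemma~\ref{le2} together with \eqref{uio}--\eqref{lem1proof4}, and combine with Lemma~\ref{le1}. Your explicit bookkeeping of which constraint on $\bigl|\frac{q_1}{\xi_1}-\frac{q-q_1}{\xi-\xi_1}\bigr|$ drives each section count matches what the paper does implicitly.
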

\begin{proof}
Assuming that $B^{2,1}_{\tau,\xi,q} $ is not empty,
it follows from \eqref{uio} and Lemma~\ref{le3} that the measure
of its projection on the $ \xi_1 $ axis  can be estimated as
$$
\lesssim\frac{(  K_1\vee K_2)^{1/2}}{(  M_1\vee  M_2)^{1/2}} \; .
$$
On the other hand, it follows from Lemma~\ref{le2} that 
 for a fixed $ \xi_1$
 the cardinal of its  $ q_1$ section is bounded by
 \begin{equation} \label{lem2proof1}
\lesssim (M_1\wedge M_2)^{1/2} (K_1\vee K_2)^{1/2}+1 
\lesssim (M_1\wedge M_2)^{1/2} (K_1\vee K_2)^{1/2}\,.
 \end{equation}
 This proves (\ref{est1lem2}) in view of Lemma~\ref{le1}. 
 Now coming back to (\ref{uio}) and using Lemma~\ref{le3}  we infer that the projection of $ B^{2,2}_{\tau,\xi,q} $ on the $ \xi_1 $ axis
  is bounded by
  $$
 \frac{1}{(M_1\vee M_2)^{1/2}} \Bigl[(K_1\vee K_2)+(M_1\vee M_2) \Bigr]^{1/2}\le C
  $$
  since $ B^{2,2}_{\tau,\xi,q} $ is empty as soon as $ K_1\vee K_2 \ge M_1\vee M_2 $. On the other hand, it follows from  (\ref{lem1proof4}) and Lemma \ref{le1}.
 that  for a fixed $ \xi_1$
 the cardinal of its  $ q_1$ section is also  bounded by (\ref{lem2proof1}). This leads to (\ref{est2lem2}) thanks to Lemma~\ref{le1}.
This completes the proof of Lemma~\ref{lem2}.
\end{proof}
We finally estimate the size of $B^{2,3}_{\tau,\xi,q}$.
\begin{lem}\label{lem3}
For any $ 0<\varepsilon\ll 1 $ it holds
\begin{equation}\label{est1lem3}
|B^{2,3}_{\tau,\xi,q}| \lesssim C_\varepsilon 
\frac{(M_1\wedge M_2)^{1/2+\varepsilon/2}}
{(M_1\vee M_2)^{1/2-\varepsilon/2} }
(K_1\vee K_2) + |I_1|\wedge |I_2| \; .
\end{equation}
\end{lem}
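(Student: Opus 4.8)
The plan is to decompose $B^{2,3}_{\tau,\xi,q}$ dyadically according to the size of
$$
w:=\frac{q_1}{\xi_1}-\frac{q-q_1}{\xi-\xi_1},
$$
writing $B^{2,3}_{\tau,\xi,q}=\bigcup_{W}B^{W}$, where $|w|\sim W$ on $B^W$ and $W$ runs over dyadic numbers with $W\gtrsim \max\big((K_1\vee K_2)^{1/2},(M_1\vee M_2)^{1/2}\big)(M_1\wedge M_2)^{-1/2}$, this lower bound being forced by the very definition of $B^{2,3}_{\tau,\xi,q}$. On each block I will estimate the cardinality of the $q_1$-sections and the measure of the $\xi_1$-projection separately and then combine them through Lemma~\ref{le1}, exactly as in the proofs of Lemmas~\ref{lem1} and~\ref{lem2}. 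The genuinely new point, and the main obstacle, is that now $|w|$ is unbounded, so everything hinges on the bookkeeping of the summation in $W$.

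For the $q_1$-section at fixed $\xi_1$ I will use that, by \eqref{lem1proof4}, the $q_1$-derivative of the phase equals $2w$, so $|w|\sim W$ gives, via Lemma~\ref{le2}, a cardinality $\lesssim (K_1\vee K_2)W^{-1}+1$. For the $\xi_1$-projection the key observation is that, by \eqref{uio} together with $\xi_1(\xi-\xi_1)/\xi\sim M_1\wedge M_2$, on $B^W$ one has $\frac{(\xi q_1-\xi_1 q)^2}{\xi\xi_1(\xi-\xi_1)}\sim (M_1\wedge M_2)W^2$. Subtracting this from the phase forces the quadratic $\tau-\xi^3+q^2/\xi+3\xi\xi_1(\xi-\xi_1)$ (whose leading coefficient $-3\xi$ has size $\sim M_1\vee M_2$) to lie in an interval of length $\lesssim (K_1\vee K_2)+(M_1\wedge M_2)W^2$, so Lemma~\ref{le3} bounds the projection by
$$
\frac{\big((K_1\vee K_2)+(M_1\wedge M_2)W^2\big)^{1/2}}{(M_1\vee M_2)^{1/2}}\lesssim \frac{(K_1\vee K_2)^{1/2}}{(M_1\vee M_2)^{1/2}}+\frac{(M_1\wedge M_2)^{1/2}W}{(M_1\vee M_2)^{1/2}}.
$$
Of course the projection is also trivially $\le |I_1|\wedge|I_2|$, and I will always keep the smaller of the two bounds.

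Combining these through Lemma~\ref{le1} and summing over dyadic $W$, the main term $(K_1\vee K_2)W^{-1}$ of the section count contributes, per block, $(K_1\vee K_2)(M_1\wedge M_2)^{1/2}(M_1\vee M_2)^{-1/2}$ (plus a geometrically summable piece). Using the cap $|I_1|\wedge|I_2|$ for $W\gtrsim (M_1\vee M_2)^{1/2}(|I_1|\wedge|I_2|)(M_1\wedge M_2)^{-1/2}$ keeps the large-$W$ tail geometrically convergent, while the remaining range of $W$ contains only $O(\log(M_1\vee M_2))$ blocks; absorbing this logarithm into $\big((M_1\wedge M_2)(M_1\vee M_2)\big)^{\varepsilon/2}$ (licit since $M_1\wedge M_2\ge 1$) yields the first term of \eqref{est1lem3}. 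The $+1$ in the section count is handled globally: at fixed $\xi_1$ the admissible $q_1$ form, by Lemma~\ref{le3}, a set of diameter $\lesssim \big((M_1\wedge M_2)(K_1\vee K_2)\big)^{1/2}$, so the corresponding values of $|w|$ range over an interval of length $\lesssim (K_1\vee K_2)^{1/2}(M_1\wedge M_2)^{-1/2}\lesssim W$, whence each vertical fiber meets only $O(1)$ blocks. Therefore $\sum_W\mathrm{mes}\big(\mathrm{proj}_{\xi_1}B^W\big)\lesssim |I_1|\wedge|I_2|$, which is the second term of \eqref{est1lem3}.

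I expect the hard part to be precisely this $W$-summation rather than any single estimate. Unlike in the regions treated by Lemmas~\ref{lem1} and~\ref{lem2}, one must simultaneously prevent the $\xi_1$-projections from over-counting for large $W$ — achieved by the cap $|I_1|\wedge|I_2|$ together with the geometric decay $(K_1\vee K_2)W^{-1}$ — and control the integer correction $+1$ uniformly in $W$, which is exactly where the remark that each fiber meets only boundedly many dyadic blocks is indispensable. The logarithmic loss in the number of blocks is what forces the $\varepsilon$-loss and the constant $C_\varepsilon$ in the statement.
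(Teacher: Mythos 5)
Your proposal is correct and follows essentially the same route as the paper's proof: the same dyadic decomposition in $|q_1/\xi_1-(q-q_1)/(\xi-\xi_1)|$, the same $q_1$-section and $\xi_1$-projection bounds obtained from \eqref{lem1proof4}, \eqref{uio} and Lemmas~\ref{le1}--\ref{le3}, and the same origin for the two terms of \eqref{est1lem3}. The only (equivalent) bookkeeping differences are that the paper extracts the $\varepsilon$-loss by interpolating the projection bound with the crude bound $M_1\wedge M_2$ so that the dyadic sum carries a convergent factor $L^{-\varepsilon}$, rather than counting $O(\log)$ blocks as you do, and it isolates the range $|w|\ge K_1\vee K_2$ (where each fiber has at most two points) to produce the $|I_1|\wedge|I_2|$ term, instead of your global treatment of the $+1$ correction via the observation that each $\xi_1$-fiber meets $O(1)$ dyadic blocks.
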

\begin{proof}
We subdivide $ B^{2,3}_{\tau,\xi,q} $ by setting
$$
B^{2,3}_{\tau,\xi,q}(+\infty):=\Big\{(\xi_1, q_1)\in
B^{2,3}_{\tau,\xi,q}\,:\,
\Big|\frac{q_1}{\xi_1}-\frac{q-q_1}{\xi-\xi_1}\Big|\ge |K_1\vee K_2| \Big\}
$$
and
$$
B^{2,3}_{\tau,\xi,q}(L):=\Big\{(\xi_1, q_1)\in B^{2,3}_{\tau,\xi,q}\,:\,
\Big|\frac{q_1}{\xi_1}-\frac{q-q_1}{\xi-\xi_1}\Big|\sim L \Big\}
$$
where $ L=2^l $, $l_0\le l\le l_1 $ with
$$
l_0 \sim \ln
\Bigl(\max (\frac{  M_1\vee M_2}{ M_1\wedge  M_2}, \frac{ K_1\vee
K_2}{ M_1\wedge  M_2})\Bigr),\quad
l_1\sim \ln (K_1\vee K_2)\,.
 $$
In view of (\ref{lem1proof4}), for a fixed $ \xi_1 $, the  $ q_1
$-section of  $ B^{2,3}_{\tau,\xi,q}(+\infty) $ contains at most
two elements and thus by Lemma~\ref{le1},
\begin{equation} \label{lem4est1}
|B^{2,3}_{\tau,\xi,q}(+\infty) | \lesssim |I_1|\wedge |I_2|\; .
\end{equation}
Now, in $ B^{2,3}_{\tau,\xi,q}(L) $ it holds
$$
|\tau-\xi^3+q^2/\xi+3 \xi \xi_1 (\xi-\xi_1)| \lesssim K_1\vee K_1+(M_1\wedge M_2) L^2
$$
and thus from Lemma~\ref{le3} we infer that the measure of the projection of this region on the $ \xi_1 $-axis is bounded by
$$
 \frac{(  K_1\vee K_2)^{1/2}}{(M_1\vee M_2)^{1/2}}+
 \frac{(  M_1\wedge  M_2)^{1/2}}{(M_1\vee M_2)^{1/2}} L \; .
$$
Interpolating with the crude bound $ M_1\wedge M_2 $ we obtain  the  bound
$$
\Bigl[ \frac{(  K_1\vee K_2)^{1/2}}{(M_1\vee M_2)^{1/2}}+
 \frac{(  M_1\wedge  M_2)^{1/2}}{(M_1\vee M_2)^{1/2}} L\Bigr]^{1-\varepsilon} (M_1\wedge M_2)^\varepsilon \; .
$$
On the other hand, for a fixed $ \xi_1 $,  by using again (\ref{lem1proof4}) and Lemma~\ref{le2}, we obtain that
the cardinal of its $ q_1 $-section is bounded by
$$
\frac{K_1\vee K_2}{L} +1 \lesssim \frac{K_1\vee K_2}{L} \; .
$$
We thus get by Lemma~\ref{le1},
\begin{eqnarray*}
| B^{2,3}_{\tau,\xi,q}(L) | & \lesssim & \Bigl[ \frac{(  K_1\vee
K_2)^{1/2}}{(M_1\vee M_2)^{1/2}}+
 \frac{(  M_1\wedge  M_2)^{1/2}}{(M_1\vee M_2)^{1/2}} L\Bigr]^{1-\varepsilon} (M_1\wedge M_2)^\varepsilon
  \frac{(K_1\vee K_2)}{L}   \\
  & \lesssim & \frac{(M_1\wedge M_2)^{1/2+\varepsilon/2} (K_1\vee K_2)}{(M_1\vee M_2)^{1/2-\varepsilon/2} L^\varepsilon},
  \end{eqnarray*}
  where we used that in the considered case $L\geq \frac{(  K_1\vee K_2)^{1/2}}{( M_1\wedge M_2)^{1/2}}$.
  A summation over $L$ yields the claimed bound. This completes the proof of Lemma~\ref{lem3}.
  \end{proof}
  %%%%%%%%%%%%%%%%%%%%%%%%%%%%%%
 Now, using  Lemmas \ref{lem1}-\ref{lem3}, we get
  $$
  | B_{\tau,\xi,q}| \lesssim
  (K_1\vee K_2)^{1/2} (M_1\wedge M_2)^{1/2}  +
  C_\varepsilon  \frac{(M_1\wedge M_2)^{1/2} (K_1\vee K_2)}{(M_1\vee M_2)^{1/2-\varepsilon} }
+  |I_1|\wedge |I_2| \; .
  $$
  and thus according to (\ref{eq8})-(\ref{eq9}),
     \begin{multline*}
  \| u_1\star u_2\|_{L^2} \lesssim  C_\varepsilon (K_1\wedge K_2)^{1/2}(K_1\vee K_2)^{1/4} 
  \\
 \Bigl[ (M_1\wedge M_2)^{1/4} \left\langle
  \frac{(K_1\vee K_2)^{1/4} }{(M_1\vee M_2)^{1/4-\varepsilon} } \right\rangle
  +\frac{ (|I_1|\wedge |I_2|)^{1/2}}{(K_1\vee K_2)^{1/4} }\Bigr] 
  \| u_1\|_{L^2}   \| u_2\|_{L^2} \; .
\end{multline*}
 This completes the proof of Proposition~\ref{propo1}. 
 \qed
 %%%%%%%%%%%%%%%%%%%%%%
 \begin{coro}\label{coro1}
Let $ u_1 $ and $ u_2 $ be two real valued $ L^2 $ functions
defined on $ \R\times \R\times \Z $ with the following support
properties
\begin{equation*}
(\tau,\xi,q)\in {\rm supp}\, (u_i) \Rightarrow |\xi|\sim M_i ,
\langle \tau-\xi^3+q^2/\xi \rangle \sim K_i , \, i=1,2.
\end{equation*}
Then the following estimates hold, 
\begin{multline} \label{coroest1}
\| u_1\star u_2\|_{L^2(|\xi|\sim M)} \lesssim   (K_1\wedge
K_2)^{1/2}(M_1\wedge M_2\wedge M)^{1/2} 
\\
\langle(K_1\vee K_2)^{1/4} (M_1\wedge M_2)^{1/4}\rangle \| u_1\|_{L^2}\| u_2\|_{L^2}
\end{multline}
and if $ M_1\wedge M_2 \ge 1 $  then
\begin{multline}\label{coroest2}
  \| u_1\star u_2\|_{L^2(|\xi|\sim M)} \lesssim (K_1\wedge K_2)^{1/2}(K_1\vee K_2)^{1/4}
 \\
  \Bigl[ (M_1\wedge M_2)^{1/4} \left\langle
  \frac{(K_1\vee K_2) }{(M_1\vee M_2)^{1-} }\right\rangle^{1/4} 
  +\frac{ (M_1\wedge M_2\wedge M)^{1/2}}{(K_1\vee K_2)^{1/4}}\Bigr]\| u_1\|_{L^2}   \| u_2\|_{L^2} \, .
\end{multline}
 \end{coro}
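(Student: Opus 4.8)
The plan is to repeat, almost verbatim, the proof of Proposition \ref{propo1}: the only structural change is that the quantity $|I_1|\wedge|I_2|$ occurring there (which measured the length of the admissible $\xi_1$-interval coming from the support conditions $\xi_1\in I_1$, $\xi-\xi_1\in I_2$) is now replaced by $M_1\wedge M_2\wedge M$, the extra competing factor $M$ being supplied by the localization of the output frequency to $|\xi|\sim M$. In particular I would not try to derive the corollary from the \emph{statements} \eqref{pro1est1}-\eqref{pro1est2} alone, since \eqref{coroest2} is slightly sharper than what \eqref{pro1est2} gives; I would instead reuse the intermediate bound on $|B_{\tau,\xi,q}|$ produced inside that proof.

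First I would perform the same reduction as at the start of the proof of Proposition \ref{propo1} (following \cite{Bo3}, see also \cite{ST}), so that $\xi_1\ge 0$ and $\xi-\xi_1\ge 0$ on the domain of integration. Restricting the output to $|\xi|\sim M$ and applying Cauchy--Schwarz exactly as in \eqref{eq8} yields
\[
\|u_1\star u_2\|_{L^2(|\xi|\sim M)}^2\lesssim \sup_{(\tau,\,\xi\sim M,\,q)}|A_{\tau,\xi,q}|\,\|u_1\|_{L^2}^2\|u_2\|_{L^2}^2,
\]
where $A_{\tau,\xi,q}$ and (after the triangle inequality \eqref{eq9}) $B_{\tau,\xi,q}$ are defined as in Proposition \ref{propo1} but with the two conditions $\xi_1\in I_1$ and $\xi-\xi_1\in I_2$ simply dropped. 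The crucial new observation, which is the heart of the argument, is the bound on the projection of $B_{\tau,\xi,q}$ on the $\xi_1$-axis: since $0<\xi_1\sim M_1$, $0<\xi-\xi_1\sim M_2$ and, because of the output localization, $0<\xi_1<\xi\sim M$, this projection lies in an interval of length $\lesssim M_1\wedge M_2\wedge M$. (Note that after the sign reduction one in fact has $\xi\sim M_1\vee M_2$, so $M_1\wedge M_2\wedge M=M_1\wedge M_2$ and the factor $M$ is harmless; keeping $\wedge M$ only makes the statement cleaner.) This is precisely the estimate that $|I_1|\wedge|I_2|$ provided in the proposition.

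With this substitution, \eqref{coroest1} follows as \eqref{pro1est1}: I bound $|B_{\tau,\xi,q}|$ by the product of its $\xi_1$-projection ($\lesssim M_1\wedge M_2\wedge M$) with the maximal cardinality of a $q_1$-section, which by Lemma \ref{le3} is $\lesssim (M_1\wedge M_2)^{1/2}(K_1\vee K_2)^{1/2}+1$ (the coefficient of $q_1^2$ being $\xi/(\xi_1(\xi-\xi_1))\sim (M_1\wedge M_2)^{-1}$). For \eqref{coroest2} I would rerun the decomposition $B_{\tau,\xi,q}=B^1\cup B^{2,1}\cup B^{2,2}\cup B^{2,3}$. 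Lemmas \ref{lem1} and \ref{lem2} use only the support sizes $M_i$ and $K_i$, never the intervals $I_i$, hence hold unchanged; in Lemma \ref{lem3} the sole occurrence of $|I_1|\wedge|I_2|$ was the bound \eqref{lem4est1} for $B^{2,3}_{\tau,\xi,q}(+\infty)$, obtained by projecting on the $\xi_1$-axis, and it is now replaced by $M_1\wedge M_2\wedge M$. Summing the three contributions gives
\[
|B_{\tau,\xi,q}|\lesssim (K_1\vee K_2)^{1/2}(M_1\wedge M_2)^{1/2}+C_\varepsilon\frac{(M_1\wedge M_2)^{1/2}(K_1\vee K_2)}{(M_1\vee M_2)^{1/2-\varepsilon}}+M_1\wedge M_2\wedge M,
\]
and feeding this back into \eqref{eq8}-\eqref{eq9}, taking square roots and factoring out $(K_1\wedge K_2)^{1/2}(K_1\vee K_2)^{1/4}$ (using $\langle (K_1\vee K_2)^{1/4}/(M_1\vee M_2)^{1/4-}\rangle=\langle (K_1\vee K_2)/(M_1\vee M_2)^{1-}\rangle^{1/4}$) produces exactly \eqref{coroest2}.

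The main obstacle is really just a verification rather than a new difficulty: one has to confirm that removing the interval constraints does not affect any of the auxiliary Lemmas \ref{lem1}-\ref{lem2}, and that in Lemma \ref{lem3} the interval term is confined to the single piece $B^{2,3}_{\tau,\xi,q}(+\infty)$, so that the replacement of $|I_1|\wedge|I_2|$ by $M_1\wedge M_2\wedge M$ is the only modification required throughout the argument.
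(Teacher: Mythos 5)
Your plan founders on the one case where the corollary actually says something new, namely $M\ll M_1\wedge M_2$. Your parenthetical remark that after the sign reduction one has $\xi\sim M_1\vee M_2$, so that the factor $M$ is harmless, gives the game away: the reduction to $\xi\ge 0$ concerns the supports of $u_1$ and $u_2$ separately, and the convolution of a piece with $\xi_1\sim M_1>0$ against a piece with $\xi-\xi_1\sim -M_2<0$ is exactly what produces output frequencies $|\xi|\sim M\ll M_1\wedge M_2$ (forcing $M_1\sim M_2$). In that configuration your key claim --- that the $\xi_1$-projection of $B_{\tau,\xi,q}$ has length $\lesssim M_1\wedge M_2\wedge M$ --- is false: for a fixed output $\xi$ the constraints $|\xi_1|\sim M_1$ and $|\xi_1-\xi|\sim M_2$ confine $\xi_1$ to an interval of length $\min(M_1,M_2)$ only, with no gain from $M$. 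Nor are Lemmas \ref{lem1} and \ref{lem2} sign-insensitive as you assert: their proofs use $\xi\sim\xi_1\vee(\xi-\xi_1)$ (the first line of the proof of Lemma \ref{lem1}) to extract the denominators $(M_1\vee M_2)^{1/2}$ from Lemma \ref{le3}, and in the opposite-sign regime the leading coefficient $3\xi\sim M$ of the relevant quadratic degrades those bounds. More fundamentally, no argument of the form $\|u_1\star u_2\|^2_{L^2(|\xi|\sim M)}\le\sup_{z}|A_{z}|\,\|u_1\|^2\|u_2\|^2$ can yield \eqref{coroest1}: taking $u_1=\chi_{[N,2N]}$ and $u_2=\chi_{[-2N,-N]}$ in the $\xi$ variable alone, one has $\sup_{|\xi|\sim M}|A_\xi|\sim N$ while the true ratio $\|u_1\star u_2\|_{L^2(|\xi|\sim M)}/(\|u_1\|\,\|u_2\|)$ is $\sim M^{1/2}$, so the sup-of-measure bound loses a factor $(N/M)^{1/2}$.

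The missing idea is almost-orthogonality. The paper deduces the corollary from the \emph{statement} of Proposition \ref{propo1} (not its innards) by decomposing $u_i=\sum_k u_{i,k}$ with $u_{i,k}=u_i\,\chi_{\{kM\le\xi<(k+1)M\}}$, observing by support considerations that only the pairs $u_{1,k}\star u_{2,-k+q}$ with $|q|\lesssim 1$ can contribute to the region $|\xi|\sim M$, applying \eqref{pro1est1}--\eqref{pro1est2} to each such pair with $|I_1|=|I_2|=M$, and summing by Minkowski and then Cauchy--Schwarz in $k$. It is this decomposition into blocks of width $M$, which exploits the orthogonality of the $u_{i,k}$ on the right-hand side and is invisible to any pointwise bound on the convolution set, that converts $|I_1|\wedge|I_2|$ into $M$ and hence into $M_1\wedge M_2\wedge M$.
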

 \begin{proof}
Rewriting the functions $ u_i $ as $ u_i=\sum_{k\in \Z} u_{i,k} $ with 
$$
u_{i,k}= u_i \, \chi_{\{ kM\le \xi <(k+1) M\}}\; ,
$$
we easily obtain by support considerations and Minkowsky inequality that
\begin{eqnarray*}
\| u_1\star u_2\|_{L^2(|\xi|\sim M)}  &= & \Bigr\|\sum_{k\in \Z} \sum_{0\le |q|\lesssim 1 }u_{1,k} \star u_{2,-k+q} \Bigl\|_{L^2(|\xi|\sim M)} \\
 & \lesssim &  \sum_{0\le q\lesssim 1 }\sum_{k\in \Z} \| u_{1,k}\star u_{2,-k+q}\|_{L^2} 
\end{eqnarray*}
The desired result follows by applying Proposition \ref{propo1} with $ |I_1|=|I_2|= M $ for each $ k\in \Z $, and then  Cauchy-Schwarz in $ k$.
\end{proof}
%%%%%%%%%%%%%%%%%%%%%%%%%%%%%%%%%%%%%%%%%
A rough interpolation between (\ref{pro1est1}) and (\ref{pro1est2}) on one side and between
 (\ref{coroest1}) and (\ref{coroest2}) on the other side we get the following bilinear estimates that we will use intensively in the next section.
\begin{coro}\label{coro2}
There exists $\beta_0<1/2$ such that the following holds true. Let $ u_1 $ and $ u_2 $ be two real valued $ L^2 $ functions
defined on $ \R\times \R\times \Z $ with the following support properties
\begin{equation*}
(\tau,\xi,q)\in {\rm supp}\, (u_i) \Rightarrow |\xi|\sim M_i\ge 1 ,
\langle \tau-\xi^3+q^2/\xi \rangle \sim K_i , \, i=1,2.
\end{equation*}
Then the following estimates hold ,
\begin{equation} \label{coro2est1}
\| u_1\star u_2\|_{L^2} \lesssim  (K_1 K_2)^{\beta_0} (M_1\wedge M_2)^{1/2} \| u_1\|_{L^2}  \|  u_2\|_{L^2}
\end{equation}
and
\begin{multline}\label{coro2est2}
  \| u_1\star u_2\|_{L^2(|\xi|\sim M)}  \lesssim   (K_1 K_2)^{\beta_0}\big[(M_1\wedge M_2)^{1/4}+ (M_1\wedge M_2\wedge M)^{1/2}\big] 
  \\
  \| u_1\|_{L^2}  \|  u_2\|_{L^2} \, .
\end{multline}
 \end{coro}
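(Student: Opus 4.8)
The goal is to deduce Corollary~\ref{coro2} from the four estimates \eqref{pro1est1}, \eqref{pro1est2}, \eqref{coroest1}, \eqref{coroest2}. The two source estimates in each pair differ only in the power of $K_1\vee K_2$ carried by the leading term: \eqref{pro1est1} and \eqref{coroest1} carry a bracket $\langle (K_1\vee K_2)^{1/4}(M_1\wedge M_2)^{1/4}\rangle$ with the low power $1/4$, while \eqref{pro1est2} and \eqref{coroest2} carry the high power but with an \emph{additional} negative power $(M_1\vee M_2)^{-(1-)}$ of the large frequency. The plan is to interpolate between these two extremes so as to trade the excess $K$-power in \eqref{pro1est2}--\eqref{coroest2} against the gain in $(M_1\vee M_2)^{-(1-)}$, arriving at a symmetric $(K_1K_2)^{\beta_0}$ prefactor with $\beta_0<1/2$ and no surviving negative power of $M_1\vee M_2$.

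\smallskip

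\textbf{Steps.} First I would symmetrize the $K$-dependence: since both $u_1,u_2$ are given, I may relabel so that $K_1\vee K_2$ and $K_1\wedge K_2$ appear, and observe that $(K_1\wedge K_2)^{1/2}(K_1\vee K_2)^{\gamma}\lesssim (K_1K_2)^{\max(1/2,\gamma)}$ once $\gamma\ge 1/2$; the aim is therefore to push the effective power of $K_1\vee K_2$ in the leading term strictly below $1/2$ while keeping $(K_1\wedge K_2)^{1/2}$ controlled by $(K_1K_2)^{\beta_0}$. Second, for the first pair I would take a convex combination of \eqref{pro1est1} (exponent $1/4$ on $K_1\vee K_2$, no $M_1\vee M_2$ gain) and \eqref{pro1est2} (exponent $3/4$ on $K_1\vee K_2$ in the leading term, but damped by $(M_1\vee M_2)^{-(1-)}$). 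Since the worst case for \eqref{coro2est1} is $M_1\vee M_2\sim M_1\wedge M_2$ (where the damping is useless) versus $M_1\vee M_2\gg M_1\wedge M_2$ (where it is decisive), the interpolation parameter can be chosen so that in \emph{both} regimes the net exponent on each $K_i$ is some $\beta_0<1/2$, while the frequency factor collapses to $(M_1\wedge M_2)^{1/2}$ as claimed in \eqref{coro2est1}. Third, I would repeat the identical convex combination on the pair \eqref{coroest1}--\eqref{coroest2} to obtain \eqref{coro2est2}; here the two bracketed terms $(M_1\wedge M_2)^{1/4}$ and $(M_1\wedge M_2\wedge M)^{1/2}$ survive separately, so I would simply distribute the interpolation across the sum rather than combining the brackets.

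\smallskip

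\textbf{Main obstacle.} The delicate point is verifying that a \emph{single} interpolation exponent $\beta_0<1/2$ works uniformly across the frequency regimes. The tension is that \eqref{pro1est2} only improves on \eqref{pro1est1} when $M_1\vee M_2$ is large, so in the balanced regime $M_1\sim M_2$ one must lean almost entirely on \eqref{pro1est1}, whose leading $K$-exponent is already the harmless $1/4$; the real work is the high-$K$ regime $K_1\vee K_2\gtrsim M_1\vee M_2$, where one must confirm that the $\varepsilon$-loss hidden in the $(M_1\vee M_2)^{1-}$ (i.e.\ the $1-$ notation) still leaves room to bring the exponent strictly below $1/2$ after interpolation. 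I expect this to reduce to checking a short system of inequalities on the interpolation weight and $\varepsilon$, which can be satisfied since the two source estimates are not sharp simultaneously; the word ``rough'' in the authors' phrase ``a rough interpolation'' signals that the margin is comfortable and no optimization is needed.
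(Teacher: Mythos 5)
Your proposal follows essentially the same route as the paper: the authors likewise interpolate between \eqref{pro1est1} and \eqref{pro1est2} (resp. \eqref{coroest1} and \eqref{coroest2}), organizing the verification as a case analysis on which term of the bracket dominates and interpolating with a small weight $\theta$ (subject to $\theta>0$ and $\tfrac34\theta<\tfrac12$) only in the problematic middle regime $K_1\vee K_2\gtrsim (M_1\vee M_2)^{1-}$, exactly the regime you single out as the main obstacle. One small imprecision: what you actually need is the case $\gamma<\tfrac12$ of your $K$-inequality, where $(K_1\wedge K_2)^{1/2}(K_1\vee K_2)^{\gamma}\le (K_1K_2)^{1/4+\gamma/2}$ with $\tfrac14+\tfrac{\gamma}{2}<\tfrac12$, rather than the stated case $\gamma\ge\tfrac12$.
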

 \begin{proof}
 Let us first prove \eqref{coro2est1}. If $M_1\wedge M_2\leq 1$ then we can easily conclude by only using (\ref{pro1est1}).
 If $M_1\wedge M_2\geq 1$ then (\ref{pro1est2}) gives
 \begin{multline*}
  \| u_1\star u_2\|_{L^2} \lesssim (K_1\wedge K_2)^{1/2}(K_1\vee K_2)^{1/4}
   (M_1\wedge M_2)^{1/4}
   \\
   \left(1+
  \frac{(K_1\vee K_2)^{1/4} }{(M_1\vee M_2)^{1/4-} }
  +\frac{ (M_1\wedge M_2)^{1/4}}{(K_1\vee K_2)^{1/4} }\right)
  \| u_1\|_{L^2}   \| u_2\|_{L^2} \, .
\end{multline*}
We now distinguish cases according to which terms dominates in the sum
$$
1+  \frac{(K_1\vee K_2)^{1/4} }{(M_1\vee M_2)^{1/4-} } +\frac{ (M_1\wedge M_2)^{1/4}}{(K_1\vee K_2)^{1/4} }\,.
$$
If the first or the third term dominates then we get directly the needed bound. The only case we really need an interpolation between
(\ref{pro1est1}) and (\ref{pro1est2})  is when the second term dominate. In this case we interpolate with weight $\theta$ on the bound
(\ref{pro1est1}) and weight $1-\theta$ on the second bound (\ref{pro1est2}) and the conditions on $\theta$ to get the needed estimate is
$$
\theta>0,\quad \frac{3}{4}\theta<\frac{1}{2}
$$ 
which of course can be achieved. Let us now turn to the proof of \eqref{coro2est2}. 
Again,  if $M_1\wedge M_2\leq 1$ then we can easily conclude by only using (\ref{coroest1}).
If $M_1\wedge M_2\geq 1$ then (\ref{coroest2}) gives
\begin{multline*}
  \| u_1\star u_2\|_{L^2(|\xi|\sim M)} \lesssim (K_1\wedge K_2)^{1/2}(K_1\vee K_2)^{1/4}
 \\
  \Big[ (M_1\wedge M_2)^{1/4}  \Big(1+
  \frac{(K_1\vee K_2)^{1/4} }{(M_1\vee M_2)^{1/4-} }
  \Big)
  +\frac{ (M_1\wedge M_2\wedge M)^{1/2}}{(K_1\vee K_2)^{1/4}}\Big]\| u_1\|_{L^2}   \| u_2\|_{L^2} \, .
\end{multline*}
Again, we distinguish cases according to which terms dominates in the sum
$$
(M_1\wedge M_2)^{1/4} +  \frac{(M_1\wedge M_2)^{1/4} (K_1\vee K_2)^{1/4} }{(M_1\vee M_2)^{1/4-} }
  +\frac{ (M_1\wedge M_2\wedge M)^{1/2}}{(K_1\vee K_2)^{1/4}}\,.
  $$
If the first or the third term dominates then we get directly the needed bound. 
If the second term dominates than 
 we interpolate with weight $\theta$ on the bound
(\ref{coroest1}) and weight $1-\theta$ on the bound 
(\ref{coroest2}) and the conditions on $\theta$ to get the needed estimate is
$$
\theta>0,\quad \theta<\frac{1}{2},
$$ 
the first assumption being imposed in order to ensure the $K_1, K_2$ conditions and the second one for the $M$, $M_1$ and $M_2$ conditions.
 This completes the proof of Corollary~\ref{coro2}.
 \end{proof}
%%%%%%%%%%%%%%%%%%%%%%%%%%%%%%%%%%%%%%%%%%%%%%%%%%%%%%%%%%%%%%%%%%%%%%%%%
%%%%%%%%%%%%%%%%%%%%%%%%%%%%%%%%%%%%%%%%%%%%%%%%%%%%%%%%%%%%%%%%%%%%%%%%%
\section{Global well-posedness on $ \R\times \T $}
\subsection{Two bilinear estimates in $ X^{b,b_1,s}
$-spaces}\label{31}
The local well-posedness result is a direct consequence of the following  bilinear estimates combined with a standard fixed
 point argument
 in $ X^{b,b_1,s} $ spaces
\begin{pro}\label{probilest_bis}
There exist $\beta<1/2$  and $ 1/4<b_1<3/8 $  such that for all $u,v \in  X^{1/2,b_1,s}$, the following bilinear estimates hold
 \begin{equation} \label{bilinearestimate1}
 \|\partial_x(uv) \|_{X^{-1/2,b_1,s}} \lesssim
  \|u\|_{X^{1/2,b_1,s}} \|v\|_{X^{\beta,0,s}} +  \|u\|_{X^{\beta,0,s}} \|v\|_{X^{1/2,b_1,s}} 
 \end{equation}
 and
  \begin{equation} \label{bilinearestimate2}
 \|\partial_x(uv) \|_{ Z^{-1/2,s}} \lesssim 
 \|u\|_{X^{1/2,b_1,s}} \|v\|_{X^{\beta,0,s}}  + \|u\|_{X^{\beta,0,s}} \|v\|_{X^{1/2,b_1,s}} 
 \; ,
 \end{equation}
 provided $ s\geq 0 $.
\end{pro}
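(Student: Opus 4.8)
The plan is to establish both estimates by duality together with dyadic (Littlewood--Paley) decompositions in the $x$-frequency $\xi$ and in the modulation variable $\sigma=\tau-\xi^3+q^2/\xi$, reducing everything to the bilinear Strichartz bounds of Corollary~\ref{coro2}. First I would decompose $u=\sum_{N_1,L_1}u_{N_1,L_1}$ and $v=\sum_{N_2,L_2}v_{N_2,L_2}$, with $N_i$ dyadic for $|\xi_i|$ and $L_i$ dyadic for $\langle\sigma_i\rangle$, and split the output according to $N\sim|\xi|$ and $L\sim\langle\sigma\rangle$. Since $X^{-1/2,b_1,s}$ is the $L^2$-dual of $X^{1/2,-b_1,-s}$, estimate \eqref{bilinearestimate1} amounts to bounding the trilinear form
$$
\Big|\int |\xi|\,\langle\zeta\rangle^s\langle\sigma\rangle^{-1/2}\Big\langle\tfrac{\langle\sigma\rangle}{\langle\xi\rangle^3}\Big\rangle^{b_1}\,\overline{\widehat w}\,\,\widehat u\,\,\widehat v\Big|
$$
by $\|w\|_{L^2}$ times the right-hand side. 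After a Cauchy--Schwarz pairing of $w$ against the product this reduces, block by block, to estimating $\|u_{N_1,L_1}v_{N_2,L_2}\|_{L^2}=\|\widehat u_{N_1,L_1}\star\widehat v_{N_2,L_2}\|_{L^2}$. The Sobolev weight is disposed of first: as $N\lesssim N_1\vee N_2$ one has $\langle\zeta\rangle^s\lesssim\langle\zeta_1\rangle^s+\langle\zeta_2\rangle^s$ for $s\ge0$, so the $\langle\cdot\rangle^s$ factor can be shifted onto the inputs and we may take $s=0$.

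With $s=0$ I would invoke Corollary~\ref{coro2}: in a generic block \eqref{coro2est1} gives $\|\widehat u_{N_1,L_1}\star\widehat v_{N_2,L_2}\|_{L^2}\lesssim (L_1L_2)^{\beta_0}(N_1\wedge N_2)^{1/2}\|u_{N_1,L_1}\|_{L^2}\|v_{N_2,L_2}\|_{L^2}$, while the refined bound \eqref{coro2est2}, which also sees the output size $N$, is used when $N\ll N_1\wedge N_2$. Placing the strong norm $X^{1/2,b_1,0}$ on one input and the weaker $X^{\beta,0,0}$ on the other (the right-hand side being symmetric in $u,v$), the whole matter reduces to bounding the scalar multiplier
$$
\mathcal M=\frac{N\,(N_1\wedge N_2)^{1/2}}{L^{1/2}\,L_1^{1/2-\beta_0}\,L_2^{\beta-\beta_0}}\,\frac{\langle L/N^3\rangle^{b_1}}{\langle L_1/N_1^3\rangle^{b_1}}
$$
uniformly, with a small power to spare for the dyadic summation. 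The summation is affordable because the constraint $\xi=\xi_1+\xi_2$ leaves only $O(1)$ relevant pairs $(N_1,N_2)$ per $N$, and the surplus negative powers of $L,L_1,L_2$ make the modulation sums geometric. The algebraic engine behind the modulation gain is the resonance identity
$$
\sigma-\sigma_1-\sigma_2=-3\xi\xi_1\xi_2-\frac{(\xi q_1-\xi_1 q)^2}{\xi\,\xi_1\,\xi_2},
$$
valid since $\xi=\xi_1+\xi_2$ and $q=q_1+q_2$, which forces $L\vee L_1\vee L_2\gtrsim|\sigma-\sigma_1-\sigma_2|$ and thus supplies the powers of the largest modulation needed to absorb the factor $N$ produced by $\partial_x$ and the factor $(N_1\wedge N_2)^{1/2}$ coming from Strichartz. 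One then argues case by case according to which of $L,L_1,L_2$ is largest and to the relative sizes of $N,N_1,N_2$.

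The genuinely delicate regime, and the one that does not occur in Bourgain's purely periodic setting, is the low $x$-frequency one, where $|\xi|$, $|\xi_1|$ or $|\xi_2|$ is small. There $3\xi\xi_1\xi_2$ degenerates, the angular term $\frac{(\xi q_1-\xi_1 q)^2}{\xi\xi_1\xi_2}$ may dominate or cancel it, and the crude modulation gain fails; moreover the pieces with $N_i\lesssim1$ must be fed back into Proposition~\ref{propo1} directly rather than through Corollary~\ref{coro2}. This is exactly the purpose of the auxiliary exponent $b_1\in(1/4,3/8)$: when $\langle\sigma\rangle\gg\langle\xi\rangle^3$ the factor $\langle\langle\sigma\rangle/\langle\xi\rangle^3\rangle^{b_1}$ furnishes additional negative powers of the offending low-frequency modulation on the inputs, while remaining harmless on the output. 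I expect the core obstacle to be making the output weight $\langle L/N^3\rangle^{b_1}$ in $\mathcal M$ balance against the input weight $\langle L_1/N_1^3\rangle^{-b_1}$ \emph{simultaneously} with the summability requirement; it is here that the precise window $1/4<b_1<3/8$, together with the choice $\beta<1/2$, is forced.

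Finally, estimate \eqref{bilinearestimate2} into $Z^{-1/2,s}$ follows from the same block decomposition. Since the $Z^{-1/2,s}$-norm replaces the $\langle\sigma\rangle^{-1/2}L^2_\tau$ weight by a $\langle\sigma\rangle^{-1}L^1_\tau$ one, I would apply Cauchy--Schwarz in $\tau$ on each modulation shell, using $\langle\sigma\rangle^{-1}\langle\sigma\rangle^{1/2}=\langle\sigma\rangle^{-1/2}\in L^2_\tau$ and summing the resulting geometric series over $L$; this converts the $L^1_\tau$ norm into an $L^2$ estimate of precisely the form already treated, so the same multiplier analysis and the same resonance identity close the bound.
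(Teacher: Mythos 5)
Your strategy for \eqref{bilinearestimate1} — duality, dyadic decomposition in $\xi$ and in the modulation, reduction to $s=0$ via $\langle\zeta\rangle^s\lesssim\langle\zeta_1\rangle^s+\langle\zeta-\zeta_1\rangle^s$, the resonance identity, and a case analysis on the dominant modulation feeding into Corollary~\ref{coro2} — is essentially the paper's proof of that estimate, and the multiplier $\mathcal M$ you isolate is the correct one. (One small correction of emphasis: the weight $\langle\langle\sigma\rangle/\langle\xi\rangle^3\rangle^{b_1}$ attached to the \emph{output} is a loss that must be absorbed when $\langle\sigma\rangle\gtrsim\langle\xi\rangle^3$; the gain from $b_1$ lives on the inputs and is what rescues the case $M_1\lesssim M\sim M_2$ with $K_1$ dominant. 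Your formula for $\mathcal M$ has this right even though your prose inverts it.)

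The genuine gap is in your treatment of \eqref{bilinearestimate2}. Your reduction of the $Z^{-1/2,s}$ norm to the $X^{-1/2,\cdot,s}$ estimate does not close: Cauchy--Schwarz in $\tau$ on the shell $\langle\sigma\rangle\sim L$ gives $\|\langle\sigma\rangle^{-1}\widehat w_L\|_{L^1_\tau}\lesssim\|\langle\sigma\rangle^{-1/2}\widehat w_L\|_{L^2_\tau}$ with constant $O(1)$ (the shell has $\tau$-measure $\sim L$, so $\|\langle\sigma\rangle^{-1/2}\|_{L^2_\tau(\langle\sigma\rangle\sim L)}\sim 1$), and the resulting sum over dyadic $L$ is \emph{not} geometric. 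What one actually gets is the embedding $X^{-1/2+\epsilon,0,s}\hookrightarrow Z^{-1/2,s}$, i.e. you must retain a surplus factor $\langle\sigma\rangle^{-\epsilon}$ on the output. This surplus is available in most regions of the case analysis, but precisely not in the regions where the output modulation is the dominant one and the full power $K^{-1/2}$ is consumed to absorb the derivative loss — namely case~A of Lemma~\ref{lemme1} and the region treated in \eqref{estLambda0} of Lemma~\ref{lemme3}. There a separate argument is required: one dualizes $L^2_\zeta L^1_\tau$ against a test function $h=h(\xi,q)$ independent of $\tau$, and then the modulation localization $\langle\sigma\rangle\sim K$ on the output must be transferred to an input via the convolution bound $\chi_{\{\langle\sigma\rangle\sim K\}}\lesssim\chi_{\{\langle\sigma\rangle\sim K\}}\star_\tau(K^{-1}\chi_{\{|\tau|\le K\}})$ together with an $L^1$--$L^\infty$ interpolation showing that the resulting operator has $L^2$ norm $\lesssim(K_2/K)^{1/2}$. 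It is this extra factor $(K_2/K)^{1/2}$, absent from your argument, that simultaneously restores summability in $K_2$ and provides the decay in $M_1M_2^2$ needed to sum the frequencies. Relatedly, your claim that only $O(1)$ pairs $(N_1,N_2)$ occur per $N$ is only adequate in the high-frequency regime; the low-frequency configurations (output frequency in $2^{-\N}$, or one input frequency in $2^{-\N}$) require the further splitting according to which of $\sigma,\sigma_1,\sigma_2$ dominates, and in the case where $\sigma_1$ dominates the lower bound $K_1\gtrsim M_1M_2^2$ is what permits the summation over $M_1\lesssim 1$; this mechanism is not supplied by your sketch.
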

To prove this bilinear estimate we first note  that by symmetry it
suffices to consider $ \partial_x \Lambda(u,v) $ where
$\Lambda(\cdot,\cdot) $ is defined by
$$
{\mathcal F}_x (\Lambda(u,v)):=\int_{\R} \chi_{|\xi_1|\le
|\xi-\xi_1|} ({\mathcal F}_x u) (\xi_1) ({\mathcal F}_x v)
(\xi-\xi_1)\, d\xi_1\; .
$$
The following resonance relation (see \cite{Bo3}) is crucial for our analysis :
\begin{equation}\label{resonant}
|\sigma-\sigma_1-\sigma_2|=\Bigl|3\xi \xi_1 (\xi-\xi_1)+\frac{(\xi q_1-\xi_1 q)^2}{\xi \xi_1 (\xi-\xi_1)}\Bigr|\ge 3 |\xi \xi_1 (\xi-\xi_1)|,
\end{equation}
where 
$$
\sigma:=\sigma(\tau,\xi,q):=\tau-\xi^3+q^2/\xi,\quad
 \sigma_1:=\sigma(\tau_1,\xi_1,q_1),
 $$
and
$\sigma_2:=\sigma(\tau-\tau_1,\xi-\xi_1,q-q_1)$.
We start by  giving an estimate for interactions of high
frequencies in $ x$ whose sum is not close to zero.
\begin{lem}\label{lemme1}
There exists $\beta<1/2$ such that
$$
\|\partial_x P_{\ge 1}\Lambda(P_{\ge 1} u,v) \|_{X^{-1/2,b_1,s} } 
\lesssim 
 \|u\|_{X^{1/2,b_1,s}} \|v\|_{X^{\beta,0,s}} +  \|u\|_{X^{\beta,0,s}} \|v\|_{X^{1/2,b_1,s}} \; ,
 $$
 provided $ s\ge 0 $ and $ 1/4<b_1<3/8 $.
 \end{lem}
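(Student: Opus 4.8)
The plan is to prove the estimate by duality combined with a dyadic decomposition in the $x$-frequencies and in the modulation variables, the gain at each frequency interaction being supplied by the bilinear Strichartz estimates of Corollary~\ref{coro2}. Writing $F=\partial_x P_{\ge1}\Lambda(P_{\ge1}u,v)$, I would first reduce, by the definition of the $X^{-1/2,b_1,s}$ norm and $L^2$-duality, to bounding the trilinear form $\int \hat F\,\overline{\hat w}$ over all test functions $w$ with $\|w\|_{X^{1/2,-b_1,-s}}\le1$. Since $\hat F=i\xi\,(\hat u\star\hat v)$ on the region $\{|\xi_1|\le|\xi-\xi_1|\}\cap\{|\xi|\ge1,\ |\xi_1|\ge1\}$, the quantity to control is $\int M\,(\hat u\star\hat v)\,\overline{\hat w}$ with $M:=|\xi|$. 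I decompose $u,v,w$ into dyadic pieces according to $|\xi|\sim M_1,M_2,M$ and $\langle\sigma_1\rangle\sim K_1$, $\langle\sigma_2\rangle\sim K_2$, $\langle\sigma\rangle\sim K$. The cut-off forces $M_1\le M_2$, the projections force $M,M_1\ge1$, and, reducing as in the proof of Proposition~\ref{propo1} to $\xi,\xi_1,\xi-\xi_1\ge0$ (the remaining sign configurations being either symmetric or confined to the genuinely low-frequency interactions treated separately), one has $M\sim M_2\ge M_1\ge1$.

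The engine of the proof is the resonance identity \eqref{resonant}, which yields
$$
K\vee K_1\vee K_2\;\gtrsim\;|\xi\,\xi_1\,(\xi-\xi_1)|\;\sim\;M^2 M_1 .
$$
I then split into three cases according to which of $K,K_1,K_2$ realises this maximum. In each case the decisive move is to apply the bilinear estimate \eqref{coro2est2} to the two factors carrying the two \emph{smaller} modulations, and to pair the result in $L^2$ against the factor carrying the largest modulation, whose weight is then available to absorb both the derivative $M$ coming from $\partial_x$ and the resonant loss $M^2M_1$. For $s\ge0$ the inequality $\langle\zeta\rangle^s\lesssim\langle\zeta_1\rangle^s+\langle\zeta_2\rangle^s$ lets me move the $\langle\zeta\rangle^s$ weight onto one of the two inputs; this is exactly what produces the two terms on the right-hand side, since when the dominant modulation sits on $u$ (resp. $v$) one measures $u$ (resp. $v$) in $X^{1/2,b_1,s}$ and the other input in $X^{\beta,0,s}$.

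Carrying out the bookkeeping in, say, the case $K=\max(K,K_1,K_2)$, where \eqref{coro2est2} is applied to $(u,v)$ with output localised at $|\xi|\sim M$ (so that its bracket is $M_1^{1/4}+(M_1\wedge M)^{1/2}\sim M_1^{1/2}$), a single dyadic block contributes
$$
c\;\sim\; M\,M_1^{1/2}\,\frac{(K_1K_2)^{\beta_0}}{K^{1/2}K_1^{1/2}K_2^{\beta}}\,\frac{\langle K/M^3\rangle^{b_1}}{\langle K_1/M_1^3\rangle^{b_1}}
$$
times $\|u_{\mathrm{block}}\|_{L^2}\|v_{\mathrm{block}}\|_{L^2}\|w_{\mathrm{block}}\|_{L^2}$. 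Here $K=\max\gtrsim M^2M_1$ pays for the factor $M\,M_1^{1/2}$ through $K^{-1/2}\le(M^2M_1)^{-1/2}=M^{-1}M_1^{-1/2}$, the weight $\langle K/M^3\rangle^{b_1}$ being harmless because $b_1<1/2$, while $K_1^{\beta_0-1/2}\le1$ and $K_2^{\beta_0-\beta}\le1$ once $\beta_0\le\beta<\tfrac12$; one checks $c\lesssim1$. The cases $K_1=\max$ and $K_2=\max$ are handled analogously, applying \eqref{coro2est2} to the pair $(v,w)$ or $(u,w)$ and using the corresponding splitting of the $\langle\zeta\rangle^s$ weight; in the $K_2=\max$ case it is essential to bound the weak-norm weight $K_2^{-1/2}\langle K_2/M_2^3\rangle^{-b_1}$ by the \emph{actual} size of $K_2$ rather than by the resonant lower bound, which is what keeps that case under control.

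The final and most delicate step is to sum the block contributions into $\|u\|\,\|v\|\,\|w\|$ by a Schur/Cauchy–Schwarz argument, for which one needs \emph{strict} geometric decay in the three modulations and in the free frequency ratio $M_1/M$. I expect the main obstacle to be precisely this summation in the cases where an input carries the dominant modulation while being measured only in $X^{\beta,0,s}$, and where the wrong-signed output weight $\langle\sigma/\langle\xi\rangle^3\rangle^{b_1}$ offers no help: the margin left after the pointwise bound $c\lesssim1$ must then be spent on summability, and the $\varepsilon$-losses inherent in Corollary~\ref{coro2} (through $\beta_0<\tfrac12$ and the factor $(M_1\vee M_2)^{1-}$) make these margins tight. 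Balancing the two competing requirements — $b_1$ large enough to tame inputs with modulation far exceeding their frequency cubed, yet small enough to tame a large but non-maximal output modulation — against the need for strict decay is exactly what pins $b_1$ to the window $1/4<b_1<3/8$ and forces $\beta$ strictly below $\tfrac12$.
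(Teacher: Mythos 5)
Your overall architecture is the same as the paper's: duality, dyadic decomposition in the three frequencies and three modulations, the resonance relation \eqref{resonant} to force $\max(K,K_1,K_2)\gtrsim MM_1M_2$, a three-way case split according to which modulation dominates, and an application of Corollary~\ref{coro2} to the two factors with the smaller modulations. The tame splitting of $\langle\zeta\rangle^s$ and the mechanism producing the two terms on the right-hand side are also as in the paper.

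However, there is a genuine gap in your frequency reduction. You claim that after reducing to $\xi,\xi_1,\xi-\xi_1\ge 0$ as in Proposition~\ref{propo1} one may assume $M\sim M_2\ge M_1\ge 1$, the other sign configurations being ``symmetric or confined to the genuinely low-frequency interactions treated separately.'' This is false. The sign reduction in Proposition~\ref{propo1} concerns the supports of the two functions being convolved; in the trilinear dual form the output frequency satisfies $\xi=\xi_1+\xi_2$ with possibly opposite signs of $\xi_1$ and $\xi_2$, so the configuration $1\le M\ll M_1\sim M_2$ (high-high to low, e.g. $\xi_1=-99$, $\xi-\xi_1=100$, $\xi=1$) genuinely occurs, satisfies all the cut-offs of $\Gamma^{\tau,\xi,q}_{\tau_1,\xi_1,q_1}$ (all three frequencies are $\ge 1$), and is \emph{not} covered by Lemmas~\ref{lemme2} and~\ref{lemme3}, which require $|\xi|\le 1$ or $|\xi_1|\le 1$. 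This is precisely the paper's case ${\bf 1}$ ($M\lesssim M_1\sim M_2$), i.e. cases A.1, B.1, C.1 — half of the case analysis, and arguably the harder half: there the resonance gives $\max(K,K_1,K_2)\gtrsim MM_2^2$, which can be far larger than $M^3$, so the output weight $\langle \langle\sigma\rangle/\langle\xi\rangle^3\rangle^{b_1}$ is genuinely large and must be absorbed. It is exactly in this regime (e.g. the term $M^{1/2-2b_1}M_2^{2b_1-3/4}$ arising in A.1) that both constraints $b_1>1/4$ and $b_1<3/8$ are actually used; your heuristic attributing the window for $b_1$ to a tension inside the $M\sim M_2$ cases misidentifies where the constraint bites. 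Your bookkeeping in the cases you do treat ($M_1\lesssim M\sim M_2$, corresponding to A.2, B.2, C.2) is essentially consistent with the paper's, but the missing high-high-to-low region must be added for the proof to be complete.
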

 \begin{proof}
 By duality we have to prove that
 \begin{multline}\label{estbil1}
I:=\Bigl| \int_{\R^4} \sum_{(q,q_1)\in \Z^2}
\Gamma^{\tau,\xi,q}_{\tau_1,\xi_1,q_1} f(\tau_1,\xi_1,q_1)
g(\tau_2,\xi_2,q_2) h(\tau,\xi,q) \, d\tau \, d\tau_1 \, d\xi \,
d\xi_1  \Bigr|
 \\
\lesssim 
(
\|f\|_{X^{1/2,b_1,s}} \|g\|_{X^{\beta,0,s} }
+
\|f\|_{X^{\beta,0,s}} \|g\|_{X^{1/2,b_1,s} }
)
\|h\|_{2}
 \end{multline}
where
\begin{equation*} 
 \Gamma^{\tau,\xi,q}_{\tau_1,\xi_1,q_1}:= 
\chi_{\{1\le |\xi_1|\le |\xi-\xi_1|, \, |\xi|\ge 1\}}
\langle \sigma \rangle^{-1/2}
\Bigl\langle\frac{\langle \sigma \rangle}
{\langle \xi \rangle^3}\Bigr\rangle^{b_1} |\xi| \langle \zeta \rangle^{s} \langle \zeta_1 \rangle^{-s} \langle \zeta-\zeta_1\rangle^{-s} \,.
\end{equation*}
For $s\geq 0$,
\begin{equation}\label{tame}
\langle \zeta \rangle^{s}\lesssim \langle \zeta_1 \rangle^{s} +\langle \zeta-\zeta_1 \rangle^{s}\,.
\end{equation}
We therefore need to prove \eqref{estbil1} only for $s=0$. 
In addition, we obtain that thanks to \eqref{tame} the estimate \eqref{estbil1} holds with the following left hand-side
\begin{multline*}
(
\|f\|_{X^{1/2,b_1,s}} \|g\|_{X^{\beta,0,0} }
+
\|f\|_{X^{1/2,b_1,0}} \|g\|_{X^{\beta,0,s} }
\\
+
\|f\|_{X^{\beta,0,s}} \|g\|_{X^{1/2,b_1,0} }
+
\|f\|_{X^{\beta,0,0}} \|g\|_{X^{1/2,b_1,s} }
)
\|h\|_{2}\,.
\end{multline*}
The above refinement allows us to get tame estimates which provides the propagation of regularity in the proof of Theorem~\ref{theo1}.
We will not further detail this (standard) aspect of the analysis in the sequel.

 We can of course assume that $ f$, $ g$  and $ h$ are
 non-negative functions in $ \R^2\times \Z $.
 For any $ L^2 $ function $w $ of $
(\tau,\xi,q) $  and any $ (K,M)\in [1,+\infty[\times \R^*_+$ we
define the following localized versions of $ w$ :
\begin{equation}
w_{K}:=w\chi_{\{\langle \sigma\rangle \sim K\}}, \;  w_{M}:=w\chi_{\{|\xi|\sim
 M\}} \mbox{ and }
 w_{K,M}:=w \chi_{\{\langle \sigma\rangle \sim K,\, |\xi|\sim
 M\}}\; .
 \end{equation}
We now consider the
 dyadic level
 \begin{eqnarray*}
 D^{K,K_1,K_2}_{M,M_1,M_2} &:= & \Bigl\{(\tau,\xi,q,\tau_1,\xi_1,q_1):
 |\xi| \sim M, \, |\xi_1|\sim M_1  \\
  & & |\xi-\xi_1|\sim M_2, \,
 \langle \sigma \rangle\sim K,   \langle \sigma_1 \rangle\sim K_1,
   \langle \sigma_2 \rangle\sim K_2\Bigr\} \; .
 \end{eqnarray*}
 Denoting by $ J^{K,K_1,K_2}_{M,M_1,M_2} $ the contribution of
 $ D^{K,K_1,K_2}_{M,M_1,M_2}$ to (\ref{estbil1}), then clearly
 \begin{equation}\label{nantes}
 I\lesssim \sum_{K,K_1,K_2, M,M_1\le M_2}J^{K,K_1,K_2}_{M,M_1,M_2}
\end{equation}
where $ K,K_1,K_2, M,M_1 $ and $ M_2$ describe the dyadic level $2^{\N}$. 
From the resonance estimate 
(\ref{resonant}) it follows that $
D^{K,K_1,K_2}_{M,M_1,M_2}$ is empty whenever $ \max(K,K_1, K_2) \lesssim M M_1 M_2 $. We  thus have only to consider the three
following contributions:  
\\
$\bullet $ {\bf  A.} $ K \gtrsim M M_1 M_2 $\\
$\bullet $ {\bf  B.} $ K_1 \gtrsim M M_1 M_2 $ and  $ K \lesssim  M M_1 M_2 $ \\
$\bullet $ {\bf  C.}  $ K_2 \gtrsim  M M_1 M_2  $ and $ \max(K,K_1) \lesssim M M_1 M_2$
\\
Moreover, since $ M_1\le M_2 $, it is clear that either 
\\
$\bullet$  {\bf 1.}
$ M\lesssim M_1\sim M_2 $ 
\\
or 
\\
$\bullet${ \bf 2.} $  M_1\lesssim M\sim M_2 $.
\\
We will now estimate all these contributions. 
\\
 \begin{enumerate}
 \item[{\bf  A.1.}]  $ K \gtrsim  M M_1 M_2 $ and $ M\lesssim M_1\sim M_2$. In this case we can write $ K\sim 2^l M M_2^2 $ with $ l\in \Z_+ $.
Moreover one has clearly $ K\gtrsim  M^3 $. From (\ref{coro2est2}) we thus have
 \begin{multline*}
I  \lesssim  
\sum_{K,K_1,K_2,M_1,M\lesssim M_2} 
MK^{-1/2}(1+K^{b_1}/M^{3b_1})
\langle f_{M_1 K_1}\star g_{M_2,K_2}, h_{M,K}\rangle
\\
\lesssim
\sum_{K_1,K_2,M\lesssim M_2, l\in \Z_+} (K_1 K_2)^{\beta_0}
 \frac{(M_2^{1/4}+M^{1/2}) M^{1-3b_1}}{2^{(1/2-b_1)l} M^{1/2-b_1} M_2^{1-2b_1}  } 
  \\
  \| f_{M_2,K_1}\|_{2} \| g_{M_2,K_2}\|_{2}   \| h_{M,2^l M M_2^2}\|_{2} 
  \\
   \lesssim \sum_{K_1,K_2,M\lesssim M_2, l\in \Z_+} 2^{-(1/2-b_1)l}( K_1 K_2)^{\beta_0} \Bigl[M_2^{-1/4}+\Bigl( \frac{M}{M_2}\Bigr)^{1-2b_1}
  \Bigr]
  \\
   \| f_{M_2,K_1}\|_{2}
   \| g_{M_2,K_2}\|_{2}   \| h_{M,2^l M M_2^2}\|_{2} 
 \end{multline*}
Here and in the sequel we use a slight abuse of notation by denoting still by $I$ the contribution of the region of dyadic values under consideration to $I$ (see \eqref{nantes}).
 Summing over $ K_1 $, $ K_2 $, $l$ and over $ M$ taking values $\lesssim M_2$, we get for $\beta\in (\beta_0,1/2)$,
 $$
 I\lesssim  \sum_{M_2}\| f_{M_2}\|_{X^{\beta,0,0}}  \| g_{M_2}\|_{X^{\beta,0,0}}   \| h \|_{2} \\
$$
and Cauchy-Schwarz in $ M_2 $ yields
\begin{equation*}
 I\lesssim  (\sum_{M_2} \| f_{M_2}\|_{X^{\beta,0,0}}^2)^{1/2} (\sum_{M_2} \| g_{M_2}\|_{X^{\beta,0,0}}^2)^{1/2}\| h \|_{2} 
 \lesssim  \| f\|_{X^{\beta,0,0}}\| g\|_{X^{\beta,0,0}}   \| h \|_{2} \; .
   \end{equation*}
%%%%%%%%%%%%%%%%%%%%%%%%%%%%%%%%%%%%
\item[{\bf  A.2.}]  $ K \gtrsim M M_1 M_2 $ and $ M_1\lesssim M\sim M_2$. In this case
we can write  $ K\sim 2^l M_1 M_2^2 $ with $ l\in \Z_+ $. We distinguish between the cases
 $ K<M^3$ and $ K\ge M^3 $. 
 \\
 i) $ K<M^3$. Then according to (\ref{coro2est1}), it holds
  \begin{multline*}
 I  \lesssim
 \sum_{K_1,K_2\atop M_1\le M_2, l\geq 0}\hspace*{-2mm}(K_1 K_2)^{\beta_0}
  \frac{M_1^{1/2}M_2}{2^{l/2} M_1^{1/2} M_2 } \| f_{M_1,K_1}\|_{2}
   \| g_{M_2,K_2}\|_{2}   \| h_{M_2,2^l M_1 M_2^2}\|_{2} 
   \\
 \lesssim  \sum_{K_1,K_2\atop M_1\le M_2, l\geq 0}
  \hspace*{-2mm}2^{-l/2} (K_1 K_2)^{\beta_0}  
  \| f_{M_1,K_1}\|_{2}   \| g_{M_2,K_2}\|_{2}   \| h_{M_2,2^l M_1 M_2^2}\|_{2} \; .
 \end{multline*}
Summing over $ K_1$, $ K_2 $ and applying Cauchy-Schwarz in $ (M_1,M_2) $ it leads to
$$
I \lesssim   \sum_{l\geq 0}2^{-l/2}(\sum_{M_1}\|f_{M_1}\|_{X^{\beta,0,0}}^2)^{1/2}(\sum_{M_2}\|g_{M_2}\|_{X^{\beta,0,0}}^2)^{1/2}
(\sum_{M_1,M_2}\|h_{M_2,2^lM_1M_2^2}\|_2^2)^{1/2},
$$
provided $\beta\in(\beta_0,1/2)$ which leads to the needed bound.
\\
ii) $K\ge M^3 $. Then using (\ref{coro2est2})  we have
\begin{eqnarray*}
 I & \lesssim &\hspace*{-4mm} \sum_{K_1,K_2\atop M_1\le M_2, l\geq 0}(K_1 K_2)^{\beta_0}
  \frac{M_1^{1/2} M_2^{1-3b_1}}{2^{(1/2-b_1)l} M_1^{1/2-b_1} M_2^{1-2b_1}   } \\
   & & \hspace*{20mm}\| f_{M_2,K_1}\|_{2}
   \| g_{M_2,K_2}\|_{2}   \| h_{M,2^l M M_2^2}\|_{2} \\
  & \lesssim &\hspace*{-4mm} \sum_{K_1,K_2\atop M_1\le M_2, l\geq 0} 2^{-(1/2-b_1)l} (K_1 K_2)^{\beta_0}
  \Bigl( \frac{M_1}{M_2}\Bigr)^{b_1}
   \| f_{M_1,K_1}\|_{2}
   \| g_{M_2,K_2}\|_{2}   \| h_{M,2^l M_1 M_2^2}\|_{2}
 \end{eqnarray*}
 Summing as in  A.1. exchanging the role of $ M $ and $ M_1 $ and using that $ b_1>0$ yields the needed bound.
 %%%%%%%%%%%%%%%%%%%%%%%%%%%%%%%%%%%%%%%%%%%%%%%%%%%%%%%%%%%%%%%%%%
 \item[{\bf  B.1.}]  $ K_1 \gtrsim M M_1 M_2 $, $ K\lesssim M M_1 M_2 $  and $ M\lesssim M_1\sim M_2 $. \\
 Then  we can write $ K_1\sim 2^l M M_2^2 $ with $ l\in \N $. We separate again the cases
  $ K< M^3 $ and $  K\ge M^3 $ and use that $K_1\gtrsim K $. \\
  i) $ K<M^3$. Then we get using Corollary~\ref{coro2}
  \begin{multline*}
 I\lesssim 
 \sum_{K,K_1,K_2,M\lesssim M_1, M_2 } MK^{-1/2}(1+K^{b_1}/M^{3b_1})
\langle f_{M_1 K_1}, \check{g}_{M_2,K_2}\star h_{M,K}\rangle
 \\
  \lesssim  \sum_{K,K_2\atop M\le M_2, l\geq 0} (K K_2)^{\beta_0}
   \frac{M^{1/2} M (2^{l/2} M^{1/2} M_2)}{(2^{l/2} M^{1/2} M_2) K^{1/2}
    } \| f_{M_2,2^l M M_2^2}\|_{2}
   \| g_{M_2,K_2}\|_{2}   \| h_{M,K}\|_{2} 
   \\
   \lesssim  \sum_{K_,K_2 \atop M\le M_2, l\geq 0}2^{-l/2} K^{-(1/2-\beta_0)} \Bigl( \frac{M}{M_2}\Bigr)
\\
 (2^{l/2} M^{1/2} M_2) \| f_{M_2,2^l M M_2^2}\|_{2}
 K_2^{\beta_0}  \| g_{M_2,K_2}\|_{2}   \| h_{M,K}\|_{2}\; .
 \end{multline*}
 Summing as in A.1.  yields for $\beta\in (\beta_0,1/2)$, 
$$
I\lesssim
 \| f\|_{X^{1/2,0,0}}\| g\|_{X^{\beta,0,0}}   \| h \|_{2}\,.
$$
 Observe that in this case and in some places of the sequel we use that the localization assumptions in the bilinear
 estimates established in the previous section are invariant under the transformation 
$(\tau,\xi,q)\rightarrow (-\tau,-\xi,-q)$.
 \\
ii) $K\ge M^3 $. Then we have using Corollary~\ref{coro2}
\begin{multline*}
 I  \lesssim  \sum_{K,K_2\atop M\le M_2, l\geq 0}K^{\beta_0}   \frac{M^{1/2} M^{1-3b_1}}{2^{(1/2-b_1)l} M^{1/2-b_1} M_2^{1-2b_1} K^{1/2} } 
\\
   (2^{l/2} M^{1/2} M_2)
\| f_{M_2,2^l M M_2^2}\|_{2}
  K_2^{\beta_0} \| g_{M_2,K_2}\|_{2}   \| h_{M,K}\|_{2} 
\\
 \lesssim  \sum_{K,K_2\atop M\le M_2, l\geq 0} 2^{-(1/2-b_1)l} K^{-(1/2-\beta_0)} \Bigl( \frac{M}{M_2}\Bigr)^{1-2b_1}
\\   
(2^{l/2} M^{1/2} M_2)
\| f_{M_2,2^l M M_2^2}\|_{2}
  K_2^{\beta_0} \| g_{M_2,K_2}\|_{2}   \| h_{M,K}\|_{2} \,.
 \end{multline*}
 This implies the result as above.
\item[{\bf  B.2.}] $ K_1 \gtrsim M M_1 M_2 $, $ K\lesssim M M_1 M_2 $  and $M_1\lesssim M\sim M_2 $.  Then
  one can write $ K_1\sim 2^l M_1 M_2^2 $ with $ l\in \N $ and clearly  $ K\lesssim M^3\sim M^3_2 $.  
 Using Corollary~\ref{coro2},  we obtain
 \begin{multline*}
 I  \lesssim  \sum_{K,K_2 \atop M_1\le M_2, l\geq 0}
K^{-(1/2-\beta_0)}    
 \frac{(M_1^{1/2}+M_2^{1/4})M_2}{2^{l/2} M_1^{1/2} M_2 }
\\   
(2^{l/2} M_1^{1/2} M_2)
\| f_{M_1,2^l M_1 M_2^2}\|_{2}
  K_2^{\beta_0} \| g_{M_2,K_2}\|_{2}   \| h_{M,K}\|_{2} 
   \end{multline*}
   Next, we can write
   \begin{multline*}
   \frac{(M_1^{1/2}+M_2^{1/4})M_2}
{2^{l/2} M_1^{1/2} M_2 }
  \lesssim
  2^{-l/2}   \Big(
   \frac{M_1^{2b_1}}{M_2^{2b_1}}+\frac{1}{M_1^{1/2-2b_1}M_{2}^{2b_1-1/4}}
   \Big)\Big\langle \frac{K_1}{M_1^3}\Big\rangle^{b_1}
   \\
   \lesssim
   2^{-l/2}   \Big(
   \frac{M_1^{2b_1}}{M_2^{2b_1}}+\frac{M_1^{2b_1-1/4}}{M_{2}^{2b_1-1/4}}
   \Big)\Big\langle \frac{K_1}{M_1^3}\Big\rangle^{b_1}
   \end{multline*} 
   and we can conclude as in the previously considered cases thanks to our assumptions on $b_1$.
 Note that this is a case where we need to introduce the additional factor in the Fourier transform restriction norm.
 %%%%%%%%%%%%%%%%%%%%%%%%%%%%%%%%%%%%%%%%%%%%%%%%%%%%%%%%%%%%%%%%%%%%
 \item[{\bf  C.1.}] $ K_2 \gtrsim M M_1 M_2 $, $ \max(K,K_1)\lesssim M M_1 M_2 $  and $M\lesssim  M_1\sim M_2 $.
   Then we can write $ K_2\sim 2^l M M_2^2 $ with $ l\in \Z_+ $. Since $ M_1\sim M_2 $ this contribution
   can be treated  exactly
  as  the contribution of the case $ B.1. $ by exchanging the roles of $ K_1 $ and $ K_2 $.
  %%%%%%%%%%%%%%%%%%%%%%%%%%%%%%%%%%%%%%%%%%%%%%%%%%%%%%%%%%%%%%%%%%%%%
   \item[{\bf C.2.}] $ K_2 \gtrsim M M_1 M_2 $, $ \max(K,K_1) \lesssim M M_1 M_2 $  and $M_1\lesssim M\sim M_2 $.
    Then we can write $ K_2\sim 2^l M_1 M_2^2 $ with $ l\in \N $. In the considered case $K\lesssim M^3$ and thus using
  Corollary~\ref{coro2}, we get  
  \begin{multline*}
 I \lesssim  \sum_{K,K_1 \atop M_1\le M_2, l\in \N}(K  K_1)^{\beta_0}
 \frac{M_1^{1/2} M_2}{(2^{l/2} M_1^{1/2} M_2) 
   K^{1/2}}
\\  
 \| f_{M_1,K_1}\|_{2}
 (2^{l/2} M_1^{1/2} M_2) \| g_{M_2,2^l M_1 M_2^2}\|_{2}   \| h_{M_2,K}\|_{2}   
\\
   \lesssim  \sum_{K_,K_1 \atop M_1\le M_2, l\in \N} 2^{-l/2} K^{-(1/2-\beta_0)} 
\\
K_1^{\beta_0} \| f_{M_1,K_1}\|_{2}
   (2^{l/2} M_1^{1/2} M_2) \| g_{M_2,2^l M_1 M_2^2}\|_{2}   \| h_{M_2,K}\|_{2} \; .
 \end{multline*}
 Summing as in A.2.case $ i) $ yields the result.
\end{enumerate}
This completes the proof of Lemma~\ref{lemme1}.
\end{proof}
%%%%%%%%%%%%%%%%%%%%%%%%%%%%%%%%%%%%%%%%%%
%%%%%%%%%%%%%%%%%%%%%%%%%%%%%%%%%%%%%%%%%%%%
Let us now treat interactions of frequencies in $ x$  whose sum is closed to zero.
\begin{lem} \label{lemme2}
There exists $\beta<1/2$ such that
\begin{equation*}
\|\partial_x P_{\le 2} \Lambda(u,v) \|_{X^{-1/2,b_1,s}} 
\lesssim \| u \|_{X^{1/2,b_1,s}} \| v \|_{X^{\beta,0,s}}
+
\| v \|_{X^{1/2,b_1,s}} \| u \|_{X^{\beta,0,s}}
\; ,
 \end{equation*}
provided $s\geq 0$ and $1/4<b_1<3/8$. 
\end{lem}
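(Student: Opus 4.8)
The plan is to mirror the structure of the proof of Lemma~\ref{lemme1}, the essential new feature being that the output frequency is now low, $|\xi|\le 2$, so that $\langle\xi\rangle\sim 1$ and the weight $\langle\langle\sigma\rangle/\langle\xi\rangle^3\rangle^{b_1}$ degenerates into $\langle\sigma\rangle^{b_1}$ (with $b_1>0$), a factor that must now be absorbed at every step. First I would reduce to $s=0$ exactly as before, via \eqref{tame}, and record that the right-hand side is symmetric: in the dual trilinear form I am always free to put the $X^{1/2,b_1,0}$ norm on whichever of the two inputs carries the higher modulation and the $X^{\beta,0,0}$ norm on the other. After passing to duality and decomposing dyadically (as in \eqref{nantes}), the multiplier to be controlled is, since $|\xi|\le 2$,
$$\chi_{\{1\le|\xi_1|\le|\xi-\xi_1|,\ |\xi|\le 2\}}\,|\xi|\,\langle\sigma\rangle^{b_1-1/2},$$
i.e. the operator loses only the bounded factor $|\xi|=M\le 2$, while $\langle\sigma\rangle^{b_1-1/2}$ is a negative, hence summable, power of the output modulation $K$.

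Next I would exploit the geometry forced by $|\xi|\le 2$ together with $|\xi_1|\le|\xi-\xi_1|$: since $\big|\,|\xi_1|-|\xi-\xi_1|\,\big|\le|\xi|\le 2$, on the support either both input scales satisfy $M_1\sim M_2\lesssim 1$, or $M_1\sim M_2=:N\gg 1$. In the low case $N\lesssim 1$ all frequencies are bounded and the resonance is useless; I would bound the contribution directly by \eqref{pro1est1} (with $|I_1|,|I_2|\lesssim 1$), using the summability of $K^{b_1-1/2}$ over the output modulation and assigning the $X^{1/2,b_1,0}$ weight to the factor carrying the \emph{smaller} modulation, so that the factor $(K_1\wedge K_2)^{1/2}$ of \eqref{pro1est1} is absorbed while the factor $(K_1\vee K_2)^{1/4}$ is controlled by the $X^{\beta,0,0}$ norm, provided $\beta>1/4$.

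The heart of the matter is $N\gg 1$. Here the resonance identity \eqref{resonant} gives $\max(K,K_1,K_2)\gtrsim|\xi\,\xi_1(\xi-\xi_1)|\sim M N^2$, and I would split into the three cases A ($K\gtrsim MN^2$), B ($K_1\gtrsim MN^2$) and C ($K_2\gtrsim MN^2$) as in Lemma~\ref{lemme1}. In case A I keep the pairing $\langle f\star g,h\rangle$ and apply \eqref{coro2est2} with output localization $M$ and input scales $N$; the bracket is $\sim N^{1/4}$, but the resonance forces $K\gtrsim MN^2$, so summing the multiplier over $K$ contributes $(MN^2)^{b_1-1/2}$, and the product $M\,(MN^2)^{b_1-1/2}N^{1/4}=M^{b_1+1/2}N^{2b_1-3/4}$ is summable precisely because $b_1<3/8$, the $\partial_x$ factor $M$ turning the negative $M$-power into the harmless positive power $M^{b_1+1/2}$. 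In cases B and C the large modulation sits on an input; using the symmetry I would put $X^{1/2,b_1,0}$ there and re-pair so that the bilinear estimate is applied to the convolution of the remaining input with $h$, whose small output scale $M$ controls the Strichartz bracket (and so produces no $N$-loss). Summing the high-modulation factor against its $K^{1/2}$ weight yields the decisive gain $(MN^2)^{-1/2}=M^{-1/2}N^{-1}$, and a short bookkeeping of the residual $K$-sums shows the total power of $N$ to be $N^{2(b_1+\beta_0-1)}$, which is negative since $b_1+\beta_0<1$; when $M<1$ one replaces Corollary~\ref{coro2} by \eqref{coroest1}, the small $M$ again producing only favorable powers.

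I expect the main obstacle to be exactly this interplay at low output frequency: because $\langle\xi\rangle\sim1$ one can no longer hide the modulation weight $\langle\sigma\rangle^{b_1}$ inside $\langle\xi\rangle^{3b_1}$ as in the high-frequency Lemma~\ref{lemme1}, while the resonance is only of size $MN^2$ with $M$ possibly small. The whole estimate then rests on the quantitative balance $1/4<b_1<3/8$ together with $\beta_0<\beta<1/2$, which is what makes case A close and keeps every residual power of $M\le2$ nonnegative.
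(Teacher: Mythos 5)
Your treatment of the high\--frequency regime $M_1\sim M_2=N\gg 1$ is sound and, although organized as a modulation split A/B/C in the style of Lemma~\ref{lemme1} rather than the paper's systematic re\--pairing of $h$ into the bilinear estimate, it closes for the same reason and at the same place: the balance $M\cdot (MN^2)^{b_1-1/2}\cdot N^{1/4}=M^{b_1+1/2}N^{2b_1-3/4}$ requires exactly $b_1<3/8$, and in the cases where the large modulation sits on an input, re\--pairing so that $h$ (with $|\xi|\le 2$) enters the bilinear estimate kills the Strichartz bracket. This is consistent with the paper's argument.

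The gap is in the low\--frequency case. First, your dichotomy is not exhaustive: from $|\xi_1|\le|\xi-\xi_1|$ and $|\xi|\le 2$ one only gets $M_1\le M_2\lesssim M_1+2$, so when both inputs are small the scales need not be comparable; the configuration $M_1\ll M\sim M_2\lesssim 1$ (the paper's region $|\xi|\sim|\xi_2|$) is a genuine separate case. Second, and more seriously, in that configuration your proposed bound via \eqref{pro1est1} ``with $|I_1|,|I_2|\lesssim 1$'' discards the only source of decay in the smallest scale: you are left with a multiplier that is $O(1)$ in $M_1$, and the sum $\sum_{M_1\in 2^{-\N}}\|f_{M_1}\|_{X^{\beta,0,0}}$ diverges (it is only bounded by $\sqrt{\#\{M_1\}}\,\|f\|_{X^{\beta,0,0}}$). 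The $\partial_x$ factor $|\xi|\sim M$ rescues the sum over the \emph{output} scale and, together with Cauchy--Schwarz in the two comparable scales, the configuration $M\lesssim M_1\sim M_2$; but it does nothing for the sum over $M_1$ when $M_1$ is the smallest. The fix is exactly what the paper does: keep $I_1$ equal to the dyadic block of $\xi_1$, so that $(|I_1|\wedge|I_2|)^{1/2}\lesssim M_1^{1/2}$ (equivalently, use the factor $(M\wedge M_1\wedge M_2)^{1/2}$ of \eqref{coroest1}), sum over the smallest scale using this half power, and Cauchy--Schwarz in the remaining pair of comparable scales. Your modulation bookkeeping in this case ($X^{1/2,b_1,0}$ on the smaller modulation, $\beta>1/4$ absorbing $(K_1\vee K_2)^{1/4}$, $K^{b_1-1/2}$ summable) is fine once this frequency\--sum issue is repaired.
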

 \begin{proof}
 By duality it is equivalent prove (\ref{estbil1}) with
\begin{equation*} 
 \Gamma^{\tau,\xi,q}_{\tau_1,\xi_1,q_1}:= \chi_{\{|\xi_1|\le |\xi-\xi_1|, \, |\xi|\le 1\}}
 \langle \sigma \rangle^{-1/2}
\Bigl\langle\frac{\langle \sigma \rangle}
{\langle \xi \rangle^3}\Bigr\rangle^{b_1} |\xi| \langle \zeta \rangle^{s} \langle \zeta_1 \rangle^{-s} \langle \zeta-\zeta_1\rangle^{-s} \,.
\end{equation*}
Again we can restrict our attention to the case $s=0$. We proceed in a similar way as in the proof of Lemma~\ref{lemme1}. 
 The only difference is that here
  $ M$ describes the dyadic levels $ 2^{-\N} $ and $ M_1, \, M_2 $ describe the dyadic levels $ 2^{\Z} $.
  We distinguish between the regions {\bf 1.} $ |\xi_1|\sim |\xi_2|$ and the region  {\bf 2.} $   |\xi|\sim |\xi_2|$.  
  \\
 {\bf 1.}   $|\xi_1|\sim |\xi_2|$. In this region $ |\xi|\lesssim |\xi_1|$. We treat only the case
   $ |\sigma_1|\lesssim |\sigma_2| $ since, as $ |\xi_1|\sim |\xi_2| $, the case $ |\sigma_2|\lesssim |\sigma_1| $ is similar.
    We subdivise this region in the two subregions  $ |\sigma_1|\ge |\sigma |$
  and $
   |\sigma_1|<|\sigma| $. In the first one, according to (\ref{pro1est1}) we infer that
   \begin{eqnarray*}
I   &  \lesssim  & \sum_{K,K_1,K_2\atop M\le 1,M_2}
\frac{(K\wedge K_2)^{1/2} (K\vee K_2)^{1/4} M^{1/2}}{K^{1/2-b_1}}
 \| f _{K_1,M_2}\|_{2}
   \| g _{K_2,M_2}\|_{2}   \| h _{K,M}\|_{2} \\
   &  \lesssim  & \sum_{K,K_1,K_2 \atop M\le 1,M_2} (K_1 K_2)^{3/8} K^{b_1-1/2}
 M^{1/2}  \| f _{K_1,M_2}\|_{2}
   \| g _{K_2,M_2}\|_{2}   \| h _{K,M}\|_{2}
  \end{eqnarray*}
 Summing in $ M$, $K_2$, $K$, $K_1$, using that $ b_1<1/2 $  and applying Cauchy-Schwarz in $ M_2 $ implies the
  desired result. \\
  Now in the subregion $ |\sigma_1|<|\sigma| $ we know from the resonance relation
   (\ref{resonant}) that $ |\sigma|\gtrsim
   |\xi_1||\xi_2||\xi| $ and thus according to (\ref{coroest1})
 \begin{eqnarray*}
I   &  \lesssim  & \sum_{K,K_1,K_2 \atop M\le 1,M_2> 1}
\frac{(K_1\wedge K_2)^{1/2} (K_1\vee K_2)^{1/4} M_2^{1/4}M}{ (M M_2^2)^{1/8} K^{3/8-b_1} }
 \| f _{K_1,M_2}\|_{2}
   \| g _{K_2,M_2}\|_{2}   \| h _{K,M}\|_{2} \\
   & & + \sum_{K,K_1,K_2 \atop M\le 1,0<M_2\le 1} \frac{(K_1\wedge
K_2)^{1/2} (K_1\vee K_2)^{1/4}  M}{ K^{1/2-b_1} }
 \| f _{K_1,M_2}\|_{2}
   \| g _{K_2,M_2}\|_{2}   \| h _{K,M}\|_{2} \\
   &  \lesssim  & \sum_{K,K_1,K_2 \atop M\le 1,M_2} (K_1 K_2)^{3/8} K^{b_1-3/8}
 M^{7/8}  \| f _{K_1,M_2}\|_{2}
   \| g _{K_2,M_2}\|_{2}   \| h _{K,M}\|_{2}
  \end{eqnarray*}
which can be sum in the same way as above for $ b_1<3/8$. \vspace*{2mm}\\
   {\bf 2.}   $ |\xi|\sim |\xi_2|$.  In this region $|\xi_1|\lesssim |\xi|$ and
   $ M, M_1 $ and $ M_2 $  describe only the dyadic levels $ 2^{-\N} $ .
    Since $M_1\lesssim  M\sim M_2 $, it follows from (\ref{pro1est1}) that
       \begin{eqnarray*}
I   &  \lesssim  & \sum_{K,K_1,K_2 \atop M_1\le 1,M_2\le 1}
\frac{(K_1\wedge K_2)^{1/2} (K_1\vee K_2)^{1/4}
M_1^{1/2}}{K^{1/2-b_1}}
 \| f _{K_1,M_1}\|_{2}
   \| g _{K_2,M_2}\|_{2}   \| h _{K,M_2}\|_{2} \\
   &  \lesssim  & \sum_{K,K_1,K_2 \atop M_1\le 1,M_2\le 1} K_1^{3/8} K_2^{3/8} K^{b_1-1/2}
 M_1^{1/2}  \| f _{K_1,M_1}\|_{2}
   \| g _{K_2,M_2}\|_{2}   \| h _{K,M_2}\|_{2}
  \end{eqnarray*}
  Summing in $ M_1$, $K_2$, $K$, $K_1$, using that $ b_1<1/2 $  and applying Cauchy-Schwarz in $ M_2 $ we obtain the
  desired result.   
  This completes the proof of Lemma~\ref{lemme2}.
  \end{proof}
  %%%%%%%%%%%%%%%%%%%%%%%%%%%
  It remains to treat  interactions between very low frequencies
   and high frequencies. For this purpose we define the operators $ \Lambda_i $, $i=0,1,2 $ by
\begin{eqnarray*}
{\mathcal F}_{t,x,y} (\Lambda_i(u,v))(\tau,\xi,q)& := &
\int_{\R^2}\sum_{q\in\Z} \chi_{\{(\tau,\tau_1,\xi,\xi_1,q,q_1)\in
A_i \}}
 ({\mathcal F}_{t,x,y} u) (\tau_1,\xi_1,q_1) \\
 & & ({\mathcal
F}_{t,x,y} v)(\tau-\tau_1,\xi-\xi_1,q-q_1)\, d\xi_1\, d\tau_1\; .
\end{eqnarray*}
where $$ A_0:=\{(\tau,\tau_1,\xi,\xi_1,q,q_1)\in B,
\,|\sigma|\ge |\xi||\xi_1| |\xi-\xi_1|\}
\; ,$$
 $$
A_1:=\{(\tau,\tau_1,\xi,\xi_1,q,q_1)\in B \,:\,|\sigma_1|\ge |\xi||\xi_1| |\xi-\xi_1|>\max(|\sigma|,|\sigma_2|\} \; ,
$$
$$
 A_2:=\{(\tau,\tau_1,\xi,\xi_1,q,q_1)\in B \,:\, |\sigma_2|\ge |\xi||\xi_1| |\xi-\xi_1|>|\sigma|\}\; , 
 $$
with
$$
B:=\{(\tau,\tau_1,\xi,\xi_1,q,q_1)\in \R^4\times\Z^2 \,:\, |\xi_1|\le |\xi-\xi_1|\} \, . 
$$
%%%%%%%%%%%%%%%%%%%%%%%%%%
\begin{lem}\label{lemme3}
There exists $\beta<1/2$ such that
\begin{multline}\label{estLambda0}
\|\partial_x \Lambda_0(P_{\le 1} u,P_{\ge 2}v)\|_{X^{-1/2,b_1,s}} 
\lesssim 
\\
\| u \|_{X^{1/2,b_1,s}} \| v \|_{X^{\beta,0,s}}+\| u \|_{X^{\beta,0,s}} \| v \|_{X^{1/2,b_1,s}}\,,
 \end{multline}
\begin{multline} \label{estLambda2}
\|\partial_x \Lambda_2(P_{\le 1} u,P_{\ge 2}v)
\|_{X^{-1/2,b_1,s}} \lesssim 
\\
\| u \|_{X^{1/2,b_1,s}} \| v \|_{X^{\beta,0,s}}+\| u \|_{X^{\beta,0,s}} \| v \|_{X^{1/2,b_1,s}}
 \end{multline}
 and
 \begin{multline}\label{R11}
\| \partial_x\Lambda_1 (P_{\le 1} u,P_{\ge 2}v)\|_{X^{-1/2,b_1,s}} 
\lesssim 
\\
\| u \|_{X^{1/2,b_1,s}} \| v \|_{X^{\beta,0,s}}+\| u \|_{X^{\beta,0,s}} \| v \|_{X^{1/2,b_1,s}}\,,
\end{multline}
provided $1/4<b_1<3/8$.
 \end{lem}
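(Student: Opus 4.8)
The plan is to follow verbatim the duality and dyadic-decomposition scheme already used for Lemma~\ref{lemme1}. First I would dualize each of \eqref{estLambda0}, \eqref{estLambda2}, \eqref{R11}, reducing them to a trilinear estimate of the form \eqref{estbil1} tested against an arbitrary $h\in L^2$, with $\Gamma^{\tau,\xi,q}_{\tau_1,\xi_1,q_1}$ now carrying the extra cut-off $\chi_{A_i}$ defining the region under consideration. Using \eqref{tame} one reduces to $s=0$, and one splits the form into the dyadic blocks $D^{K,K_1,K_2}_{M,M_1,M_2}$. The essential feature of this low--high interaction is that $u=P_{\le 1}u$ forces $M_1=|\xi_1|\lesssim 1$ while $v=P_{\ge 2}v$ forces $|\xi-\xi_1|\sim M_2\gtrsim 1$, hence $M\sim M_2$ and the resonance threshold is $|\xi\xi_1(\xi-\xi_1)|\sim M_1M_2^2$. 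By \eqref{resonant} at least one of $\langle\sigma\rangle,\langle\sigma_1\rangle,\langle\sigma_2\rangle$ is $\gtrsim M_1M_2^2$, and the regions $A_0,A_1,A_2$ are precisely the three ways this can occur, so it suffices to treat them one at a time.

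In each region I would isolate the factor carrying the large modulation and place the Section~2 bilinear estimate on the remaining pair, choosing the pairing so as to cash in the corresponding $\langle\cdot\rangle^{-1/2}$ gain. In $A_0$ the output modulation is large, so one keeps $f\star g$ paired with $h$ and uses $\langle\sigma\rangle^{-1/2}\lesssim (M_1M_2^2)^{-1/2}$; since $M_1\lesssim 1$ one must use \eqref{pro1est1} (equivalently \eqref{coroest1}) rather than Corollary~\ref{coro2}, the short interval $|I_1|\sim M_1$ supplying a factor $M_1^{1/2}$. In $A_2$ the large modulation sits on the high-frequency factor $v$, which one assigns to $X^{1/2,b_1,s}$ and pairs against the correlation of the low-frequency $u$ and of $h$, again through \eqref{pro1est1}. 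In $A_1$ the large modulation sits on the \emph{low}-frequency factor $u$; here one isolates $u$ and applies the bilinear estimate to the correlation of $v$ and $h$, \emph{both} of frequency $\sim M_2$ but whose product is localized at the low output frequency $\sim M_1$, so that \eqref{coro2est2} applies with output restriction $M\sim M_1$ and contributes its favourable term $(M_1\wedge M_2\wedge M)^{1/2}\sim M_1^{1/2}$. In all three cases the factor carrying the large modulation goes into $X^{1/2,b_1,s}$, and since $M_1\lesssim 1$ gives $\langle\xi_1\rangle\sim 1$ the modified weight reduces to $\langle\sigma_1\rangle^{b_1}$; combined with the lower bound $\langle\sigma_1\rangle\gtrsim M_1M_2^2$ this produces a genuine gain of a positive power of $M_1$ together with a negative power of $M_2$.

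It then remains to sum. After converting the $L^2$ norms back into the $X^{1/2,b_1,s}$ and $X^{\beta,0,s}$ norms, the modulation variables are summed as geometric series (writing $K\sim 2^l M_1M_2^2$ exactly as in the cases A--C of Lemma~\ref{lemme1}), $1/4<b_1<3/8$ ensuring convergence, and a final Cauchy--Schwarz in $M_2$ closes the high-frequency sum provided $\beta_0<\beta<1/2$. The genuinely new point, absent from Bourgain's purely periodic analysis on $\R\times\Z^2$, is the summation over the \emph{low} dyadic scales $M_1\in 2^{-\N}$ accumulating at the singularity of $\partial_x^{-1}$; this converges only because the short $\xi_1$-interval contributes $M_1^{1/2}$ and the modified weight contributes the extra positive power $M_1^{3b_1}$, both of which require $b_1>0$. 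I expect $A_1$ to be the main obstacle: there the large modulation is carried by the low-frequency factor and the bilinear correlation is forced to its low-frequency output, so one must balance the unfavourable term of \eqref{coro2est2}, which here equals $M_2^{1/4}$ because both factors of the correlation have frequency $\sim M_2$, against the $M_1$- and $M_2$-gains coming from $\langle\sigma_1\rangle\gtrsim M_1M_2^2$ and from the $b_1$-weight; it is precisely this balance that pins down the admissible range of $b_1$.
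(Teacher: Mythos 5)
Your overall architecture (dualize, reduce to $s=0$, dyadic blocks with $M_1\in 2^{-\N}$ and $M\sim M_2\in 2^{\N}$, resonance threshold $M_1M_2^2$, large-modulation factor sent to $X^{1/2,b_1,s}$) is the paper's, and your treatments of $A_0$ and $A_2$ are essentially the ones used there. The gap is in $A_1$, i.e.\ \eqref{R11}. You propose to isolate $f$ and apply the restricted bilinear estimate to $\check g\star h$, both inputs at frequency $\sim M_2$, with output restricted to $|\xi_1|\sim M_1$. But the short-interval gain of Proposition \ref{propo1} is attached to the \emph{inputs} of the convolution, and in your pairing both inputs are high-frequency: \eqref{coro2est2} (or \eqref{coroest2}) then unavoidably produces the term $(M_2\wedge M_2)^{1/4}=M_2^{1/4}$ alongside the favourable output term $(M_2\wedge M_2\wedge M_1)^{1/2}=M_1^{1/2}$, and no refinement of the pairing removes it. Tracking the worst term: the dual weight contributes $|\xi|\langle\sigma\rangle^{-1/2}\langle\langle\sigma\rangle/\langle\xi\rangle^3\rangle^{b_1}\lesssim M_2K^{-1/2}$; placing $f$ in $X^{1/2,b_1,0}$ costs $K_1^{-1/2-b_1}\lesssim (M_1M_2^2)^{-1/2-b_1}$ when $M_1M_2^2\ge 1$ (note $\langle\xi_1\rangle\sim 1$, so the modified weight is just $\langle\sigma_1\rangle^{b_1}$ --- there is no extra gain $M_1^{3b_1}$ as you claim); the $M_2^{1/4}$ term then leaves $M_2^{5/4}(M_1M_2^2)^{-1/2-b_1}=M_2^{1/4-2b_1}M_1^{-1/2-b_1}$, and summing the negative power of $M_1$ over $M_2^{-2}\lesssim M_1\lesssim 1$ costs $M_2^{1+2b_1}$, leaving $M_2^{5/4}$, which diverges; the regime $M_1M_2^2\le 1$, where the lower bound on $K_1$ is vacuous, is no better. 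So the balance you hope for does not close, for any $b_1$.

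The fix, which is what the paper does, is to keep the bilinear estimate on $f\star g$ (inputs at frequencies $M_1$ and $M_2$), so that \eqref{pro1est1} yields the genuine short-interval factor $(|I_1|\wedge|I_2|)^{1/2}=M_1^{1/2}$ while the bracket $\langle (K_1\vee K_2)^{1/4}(M_1\wedge M_2)^{1/4}\rangle$ only costs $K_1^{1/4}$ since $M_1\wedge M_2=M_1\lesssim 1$. Together with the $\partial_x$ factor this gives exactly $M_1^{1/2}M_2=(M_1M_2^2)^{1/2}\le K_1^{1/2}$, cancelled by half of the $X^{1/2,b_1,0}$ weight of $f$; the residual loss $K_1^{1/4}$ is absorbed by the $\langle\sigma_1\rangle^{b_1}$ part of that weight, which is precisely where $b_1>1/4$ enters, and a small leftover $K_1^{-\delta}\le (M_1M_2^2)^{-\delta}$ provides the geometric decay that makes the double sum over $M_1\in 2^{-\N}$, $M_2\in 2^{\N}$ converge after writing $M_1M_2^2=2^l$ and applying Cauchy--Schwarz in $M_2$ at fixed $l$ (the case $M_1M_2^2\le 1$ being handled separately and more crudely). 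Your $A_0$ and $A_2$ computations already exploit exactly this $M_1^{1/2}$; the point missed in $A_1$ is that one must still convolve the low-frequency factor with one of the high-frequency ones to retain it.
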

 \begin{proof}
 By duality it is equivalent prove (\ref{estbil1}) with
\begin{equation*} 
 \Gamma^{\tau,\xi,q}_{\tau_1,\xi_1,q_1}:= \chi_{A_i} \chi_{\{|\xi_1|\le 1|, \, |\xi-\xi_1|\ge  2\}}
\langle \sigma \rangle^{-1/2}
\Bigl\langle\frac{\langle \sigma \rangle}
{\langle \xi \rangle^3}\Bigr\rangle^{b_1} |\xi| \langle \zeta \rangle^{s} \langle \zeta_1 \rangle^{-s} \langle \zeta-\zeta_1\rangle^{-s} \,.
\end{equation*}
 We thus proceed similarly to the previous propositions. The only difference is that here
  $ M_1$ describes the dyadic level $ 2^{-\N} $ and $M,M_2$ describe the dyadic level $ 2^{\N} $ .  
  Again, we only consider the case $s=0$.
  \\
We first prove (\ref{estLambda0}).   Note that  in $ A_0 $  we have $
|\xi_1|\le 1 $, $ |\xi|\sim |\xi_2|\ge 2 $  and
  $ |\sigma|\ge |\xi||\xi_1||\xi_2|$.  We can thus write
    $ K\sim 2^l M_1 M_2^2 $ with $ l\in \N $. We distinguish between the two
    cases  $ K<M^3$ and $  K\ge M^3$.\\
     i) $ K<M^3$ In this case we can write
  \begin{eqnarray*}
 I& \lesssim & \sum_{K_1,K_2 \atop M_1\lesssim 1,  M_2\gtrsim 1, l\in \N}(K_1\wedge K_2)^{1/2} \langle (K_1\vee K_2)^{1/4} M_1^{1/4}\rangle  \\
 & &
    \frac{M_1^{1/2} M_2}{2^{l/2} M_1^{1/2} M_2}
   \| f_{M_1,K_1}\|_{2}
   \| g_{M_2,K_2}\|_{2}   \| h_{M_2,2^l M_1 M_2^2}\|_{2}   \\
  & \lesssim & \sum_{K_1,K_2 \atop M_1\lesssim 1,  M_2\gtrsim 1 , l\in \N} 2^{-l/2} K_1^{3/8} K_2^{3/8}
  \| f_{M_1,K_1}\|_{2}
   \| g_{M_2,K_2}\|_{2}   \| h_{M_2,2^l M_1 M_2^2 }\|_{2}
 \end{eqnarray*}
  Summing in $ K_1 $, $ K_2 $ and then applying Cauchy-Schwarz in $ (M_1,M_2) $ implies
  \begin{eqnarray*}
 I& \lesssim & \sum_{ l\in \N} 2^{-l/2}
(\sum_{M_1}  \| f_{M_1}\|_{X^{\beta,0,0}}^2)^{1/2}  (\sum_{M_2}  \| g_{M_2}\|_{X^{\beta,0,0}}^2)^{1/2}
(\sum_{M_1,M_2}  \|  h_{M_2,2^l M_1 M_2^2}\|_{2}^2)^{1/2} \\
& \lesssim &  \| f \|_{X^{\beta,0,0}}   \| g \|_{X^{\beta,0,0}}   \| h \|_{2},
 \end{eqnarray*}
 provided $\beta\in (3/8,1/2)$.
 \\
  ii) $K\ge M^3 $. Then we have for $\beta\in (3/8,1/2)$,
\begin{eqnarray*}
 I & \lesssim & \sum_{ K_1,K_2 \atop M_1\lesssim 1, M_2\gtrsim 1, l\in \N}(K_1\wedge K_2)^{1/2} \langle(K_1\vee K_2)^{1/4}
 M_1^{1/4}\rangle\\
 && \frac{ M_1^{1/2} M_2^{1-3b_1}}{2^{(1/2-b_1)l} 
 M_1^{1/2-b_1} M_2^{1-2b_1} }
      \| f_{M_1,K_1}\|_{2}
   \| g_{M_2,K_2}\|_{2}   \| h_{M_2,2^lM_1M_2^2 }\|_{2}  \\
  & \lesssim & \sum_{K_1,K_2 \atop M_1\lesssim 1, M_2\gtrsim 1, l\in \N} 2^{-(1/2-b_1)l} (K_1 K_2)^{3/8-\beta} M_1^{b_1}M_2^{-b_1}
  \| f\|_{X^{\beta,0,0}}   \| g\|_{X^{\beta,0,0}}   \| h\|_{2}
  \end{eqnarray*}
  A direct summing in $ l, K_1, K_2, M_1 $ and $ M_2$ yields the desired result.
  \\
  
  Let us now prove (\ref{estLambda2}). In $ A_{2} $ ,
    we have $ |\xi_1|\le 1 $, $ |\xi|\sim |\xi_2|\ge 2 $  and $ |\sigma_2|\ge |\xi||\xi_1||\xi_2|
    >|\sigma| $.  We can thus
    write $ K_2\sim 2^l M_1 M_2^2 $ with $ l\in \N $. 
    \\
i) $ K<M^3$ In this case we can write  by using (\ref{pro1est1}) 
  \begin{multline*}
 I \lesssim  \sum_{K,K_1 \atop M_1\lesssim 1, M_2\gtrsim 1, l\in \N} (K\wedge K_1)^{1/2}  (K\vee K_1)^{1/4}  
 \\
    \frac{M_1^{1/2} M_2}{2^{l/2} M_1^{1/2} M_2  K^{1/2}}
   \| f_{M_1,K_1}\|_{2}
   (2^{l/2} M_1^{1/2} M_2)
   \| g_{M_2,2^l M_1 M_2^2}\|_{2}   \| h_{M_2,K}\|_{2}   
   \\
   \lesssim \sum_{K,K_1 \atop M_1\le M_2, l\in \N} 2^{-l/2} K ^{-1/8}K_1^{3/8}  
   \\
   \| f_{M_1,K_1}\|_{2}
  (2^{l/2} M_1^{1/2} M_2)
   \| g_{M_2,2^l M_1 M_2^2}\|_{2}   \| h_{M_2,K}\|_{2}
 \end{multline*}
  Summing in $ K $, $ K_1 $ and then applying Cauchy-Schwarz in $ (M_1,M_2) $, it results for $\beta\in(3/8,1/2)$,
  \begin{multline*}
 I \lesssim  \sum_{ l\in \N} 2^{-l/2}
(\sum_{M_1}  \| f_{M_1}\|_{X^{\beta,0,0}}^2)^{1/2}  (\sum_{M_2}  \| h
_{M_2}\|_{2}^2)^{1/2}
\\
(\sum_{M_1, M_2} 
 2^{l} M_1 M_2^2
 \|  g_{M_2,2^l M_1 M_2^2}\|_{2}^2)^{1/2} \lesssim   \| f \|_{X^{\beta,0,0}}  \| g \|_{X^{1/2,0,0}}   \| h \|_{2}
 \end{multline*}
  ii) $K\ge M^3 $. Then using that $ K_2 \ge K $ we have
$$
\frac{K/M^3}{K_2/M_2^3}\lesssim 1
$$
and thus we can proceed exactly as in the case $K<M^3$, the obtained bound being
$$
\| f \|_{X^{\beta,0,0}}  \| g \|_{X^{1/2,b_1,0}}   \| h \|_{2} \,.
$$  

Let us finally prove \eqref{R11}. In this case we have
$$
 K_1\gtrsim K+K_2,\quad K\lesssim M^3,\quad K_1\gtrsim M_1 M_2^2
$$
Thus using
(\ref{estbil1})  and \eqref{pro1est1} we can write
\begin{equation*}
 I  \lesssim  \sum_{ K,K_1,K_2 \atop M_1\lesssim 1, M_2\gtrsim 1}
 K_2^{1/2} K_1^{1/4}
  \frac{ M_1^{1/2} M_2}
 {K^{1/2}}\| f_{M_1,K_1}\|_{2}
   \| g_{M_2,K_2}\|_{2}   \| h\|_{2}  
   \end{equation*}
   (we simply neglect the $h$ localizations).
   We now naturally split the sum into two contributions, the contribution of $M_1^{1/2}M_2\geq 1$ and $M_1^{1/2}M_2\leq 1$.
   In the first case we can write for $\delta\in (0,b_1-1/4)$ and $\varepsilon>0$ such that $b_1-\delta-1/4-2\varepsilon>0$,
 \begin{multline*}
  I\lesssim \hspace*{-10mm} \sum_{ K,K_1,K_2 \atop M_1\lesssim 1, M_2\gtrsim 1, M_1M_2^2\geq 1}
 \hspace*{-10mm}  K_1^{1/4}
  \frac{ M_1^{1/2} M_2}
 {K^{1/2}K_1^{1/2+b_1-2\varepsilon}}
      K_1^{1/2+b_1-\varepsilon}\| f_{M_1,K_1}\|_{2}
   K_2^{1/2-\varepsilon}\| g_{M_2,K_2}\|_{2}   \| h\|_{2}  
   \\
   \lesssim
   \sum_{ K_1,K_2 \atop M_1\lesssim 1, M_2\gtrsim 1, M_1M_2^2\geq 1}
  \frac{ M_1^{1/2} M_2}
 {K_{1}^{1/2+\delta}K_1^{b_1-\delta-1/4-2\varepsilon}}
    K_1^{1/2+b_1-\varepsilon}   \| f_{M_1,K_1}\|_{2}
   K_{2}^{1/2-\varepsilon}\| g_{M_2,K_2}\|_{2}   \| h\|_{2}  
   \\
   \lesssim
  \sum_{  M_1\lesssim 1, M_2\gtrsim 1, M_1M_2^2\geq 1}
  \frac{ 1} {(M_1M_2^2)^{\delta}}
  \| f_{M_1}\|_{X^{1/2,b_1,0}} 
  \| g_{M_2}\|_{X^{\beta,0,0}}   \| h\|_{2}    
\\
\lesssim
\sum_{  M_2\gtrsim 1, l\in\N}
  2^{-\delta l}\| f_{2^{l}M_2^{-2}}\|_{X^{1/2,b_1,0}  } \| g_{M_2}\|_{X^{\beta,0,0}}   \| h\|_{2}    
  \lesssim
   \| f \|_{X^{1/2,b_1,0}}
   \| g \|_{X^{\beta,0,0}}   \| h \|_{2},
\end{multline*}
where  $ \beta\in ]1/2-\varepsilon,1/2[$ and in the last inequality we use the Cauchy-Schwarz inequality in the $M_2$ summation (at fixed $l$). 
In the second case the argument is even easier since we do not use the lower bound on $K_1$. Namely, we can write for $\beta\in (3/8,1/2)$,
\begin{multline*}
   \sum_{ K,K_1,K_2 \atop M_1\lesssim 1, M_2\gtrsim 1, M_1M_2^2\leq 1}
 K_2^{1/2} K_1^{1/4}
  \frac{ M_1^{1/2} M_2}{K^{1/2}}
      \| f_{M_1,K_1}\|_{2}
   \| g_{M_2,K_2}\|_{2}   \| h\|_{2}  
   \\
   \lesssim
\sum_{  M_2\gtrsim 1, l\in\N}
  2^{- l/2}\| f_{2^{-l}M_2^{-2}}\|_{X^{\beta,0,0}} \| g_{M_2}\|_{X^{\beta,0,0}}   \| h\|_{2}    
  \lesssim
   \| f \|_{X^{\beta,0,0}}   \| g \|_{X^{\beta,0,0}}   \| h \|_{2}\,.
\end{multline*}
This competes the proof of Lemma~\ref{lemme3}.
\end{proof}
%%%%%%%%%%%%%%%%%%%%%%%%%%%%%%%
Now   (\ref{bilinearestimate1}) follows by  combining Lemmas \ref{lemme1}-\ref{lemme3}.
   %%%%%%%%%%%%%%%%%%%%%%%%%%%%%%%%%%%%%%
\subsection{ Proof of (\ref{bilinearestimate2}).}\label{32}
 Since $X^{-1/2+,0,s} $ is continuously embedded in $ Z^s $ and since in the proof of
Lemmas \ref{lemme1}-\ref{lemme3}, except in the case {\bf A} in Lemma
\ref{lemme1} and in the proof of \re{estLambda0}
 in Lemma  \ref{lemme3}, we can keep a factor $ K^{0+} $ with $
h_K $, it remains to treat the corresponding regions. Actually it
is obvious to see that  we can even restrict ourselves to the 
intersection of these regions with the region $ |\sigma|\ge 2 $.
By duality we have to prove
 $$
J:=\Bigl| \int_{\R^4} \sum_{(q,q_1)\in \Z^2} \chi_{|\xi_1|\le |\xi-\xi_1|} \Theta^{\tau,\xi,q}_{\tau_1,\xi_1,q_1}
f(\tau_1,\xi_1,q_1) g(\tau_2,\xi_2,q_2) h(\xi,q) \, d\tau \,
d\tau_1 \, d\xi \, d\xi_1  \Bigr|
 $$
 $$
\lesssim \|f\|_{L^2(\R^2\times \T)} \|g\|_{L^2(\R^2\times \T)}
\|h\|_{L^2(\R\times \T)}
$$
where
\begin{equation*} 
 \Theta^{\tau,\xi,q}_{\tau_1,\xi_1,q_1}:= 
 \langle \sigma \rangle^{-1}|\xi| \langle \zeta \rangle^{s} \langle \zeta_1 \rangle^{-s} \langle \zeta-\zeta_1\rangle^{-s} \,.
 \end{equation*}
Again we can suppose that $s=0$ and moreover we will not make use of the factors involving $b_1$.
Recall that in the region $ A$,  of Lemma~\ref{lemme1}  we have $
M, M_1, M_2 \ge 1 $ and we can write $ K\sim 2^l M M_1 M_2 $ with
$ l\in \N $. We separate the cases $ M_1\sim M_2 $ and $ M\sim
M_2 $.
\begin{enumerate}
\item[{\bf A.1.}] $ K \gtrsim M M_1 M_2 $ and $M\lesssim M_1\sim M_2
 $.\\
Then  we can write $ K\sim 2^l M M_2^2 $ with $ l\in \Z_+ $.
 From (\ref{resonant}) and (\ref{coro2est2}) we thus have
 \begin{eqnarray*}
J & \lesssim & \sum_{K_1,K_2 \atop M\lesssim M_2, l\in \N} (K_1
K_2)^{\beta_0} \frac{M_2^{1/2}M  }{2^{l} M M_2^2 } \| f_{M_2,K_1}\|_{2}
   \| g_{M_2,K_2}\|_{2}   \| h_{M}\chi_{\{\langle \sigma\rangle \sim 2^l M M_2^2\}} \|_{2} \\
  & \lesssim & \sum_{K_1,K_2 \atop  M\lesssim M_2, l\in \N} 2^{-l/2} (K_1 K_2)^{\beta_0} \Bigl( \frac{M}{M_2}\Bigr)^{1/2}
   \| f_{M_2,K_1}\|_{2}
   \| g_{M_2,K_2}\|_{2}   \| h_{M}\|_{2} \\
 \end{eqnarray*}
 Summing over $ M\le M_2 $, $ K_1 $, $K_2 $ and  $l$ and then
 applying Cauchy-Schwarz in $ M_2 $ we get the desired result.
\item[{\bf A.2.}] $ K \gtrsim M M_1 M_2 $ and $M_1\lesssim M\sim M_2$. We have
 \begin{eqnarray}
J & \lesssim & \sum_{K_1,K_2 \atop  M_1\le M_2, l\in \N}  \frac{
M_2 }{2^{l} M_1 M_2^2 }
  \Bigl( h_{M_2}\chi_{\{\langle \sigma\rangle \sim 2^l M_1 M_2^2\}} ,
     f_{K_1,M_1} \star g_{K_2,M_2}\Bigr)_{L^2} 
     \label{yo}
 \end{eqnarray}
 If $K_2\geq K$, we can easily conclude by using (\ref{coro2est2}). We shall therefore suppose that $K\geq K_2$. 
For fixed $ (\xi,q) $,
$$
\chi_{\{\langle \sigma \rangle \sim K \}}\lesssim \chi_{\{\langle
\sigma \rangle \sim K \}} {\star_\tau} \Bigl(\frac{1}{K}
\chi_{\{|\tau| \le K \}}\Bigr) \; .
$$
Using that the function $\chi_{\{|\tau| \le K \}}$ is pair, the $L^2 $ scalar product in (\ref{yo}) can be estimated by 
$$
\Bigl( h_{M_2}\chi_{\{\langle \sigma\rangle \sim 2^l M_1 M_2^2\}}
,
     f_{K_1,M_1} \star \Bigl(g_{K_2,M_2}\star_\tau
      (\frac{1}{K}
\chi_{\{\langle \sigma \rangle \le  2^l M_1 M_2^2\}}) \Bigr)
\Bigr)_{L^2} \; .
$$
We have that $ g_{K_2,M_2}\star_\tau
      (\frac{1}{K}
\chi_{\{\langle \sigma \rangle \le  2^l M_1 M_2^2\}} $ is of the
form $ g'_{M_2,K+K_2}  $ with
\begin{equation} \label{cd}
\|g'_{M_2,K+K_2}\|_{2} \lesssim 
\Bigl(\frac{K_2}{K}\Bigr)^{1/2} \|g_{K_2,M_2}\|_{2} \; .
\end{equation}
Indeed the linear operator $ T_{K,K_2}\, :\, v\mapsto \frac{1}{K} v (\cdot)  \chi_{\{\langle \cdot \rangle \sim  K_2\}}\star \chi_{\{\langle \cdot \rangle \le  K\}}$
 is a continuous endomorphism  of $ L^1(\R) $ and of $ L^\infty(\R) $ with
 \begin{eqnarray*}
 \| T_{K_,K_2}v\|_{L^\infty (\R)}  &\le & \sup_{x\in\R} \frac{1}{K} \Bigl|  \int_{\R} v(y) \chi_{\{\langle y \rangle \sim  K_2\}}\star \chi_{\{\langle x-y \rangle \le  K\}}
 \, dy \Bigr| \\
 & \lesssim &  \frac{K_2}{K} \|v\|_{L^\infty (\R)}
 \end{eqnarray*}
 and
 $$
  \| T_{K_,K_2}v\|_{L^1 (\R)}  \le \frac{1}{K} \| v\|_{L^1(\R)} \|  \chi_{\{\langle \cdot \rangle \le  K\}}\|_{L^1(\R)} \lesssim \|v \|_{L^1(\R)} \; .
 $$
 Therefore, by Riesz interpolation theorem $ T_{K,K_2} $ is a continuous endomorphism of $ L^2(\R) $ with
 $$
  \| T_{K_,K_2}v\|_{L^2 (\R)}  
\Bigl(\frac{K_2}{K}\Bigr)^{1/2}  \| v\|_{L^2(\R)} \; .
 $$
 Applying (\ref{coro2est1})  with \re{cd} at hand we get
 \begin{multline*}
J  \lesssim  \sum_{K_1,K_2 \atop  M_1\le M_2, l\in \N}
K_1^{\beta_0}K_2^{\beta_0}  \frac{ M_1^{1/2} M_2 }{2^{l}M_1 M_2^2 } 
 \| f_{M_1,K_1}\|_{2}
   \| g'_{M_2,K+K_2}\|_{2}   \| h_{M_2}\chi_{\{\langle \sigma\rangle \sim 2^l M_1 M_2^2\}}\|_{L^2_{t,x,y}} 
   \\
     \lesssim  \sum_{K_1,K_2 \atop M_1\le M_2, l\in \N}  2^{-l/2} K_1^{\beta_0} K_2^{\beta_0} 
    \Bigl(\frac{K_2}{2^{l}M_1 M_2^2}\Bigr)^{1/2}    \| f_{M_1,K_1}\|_{2}
   \| g_{M_2,K_2}\|_{2}   \| h_{M_2}\|_{L^2_{x,y}} 
   \\
      \lesssim   2^{-l/2}K_1^{\beta_0} K_2^{\beta_0} K_2^\delta (M_1 M_2^2)^{-\delta}\| f_{M_1,K_1}\|_{2}
   \| g_{M_2,K_2}\|_{2}   \| h_{M_2}\|_{L^2_{x,y}} \; .
 \end{multline*}
 By choosing $\delta\in (0,1/2-\beta_0)$, we can
 sum over $ K_1,$ $K_2$, $M_1$, $M_2 $ and $ l$ which yields the
 needed bound.
\end{enumerate}
It remains to treat  the part of the region $ A_0 $ where $|\xi_1|\le 1 $, $|\xi_2|\ge 2$. We can write
   $$
   J\lesssim \sum_{K, K_1,K_2 \atop  M_1\lesssim 1, M_2\gtrsim1}  K^{-1}M_2 ( h_{M_2}\chi_{\{\langle \sigma\rangle \sim K\}} ,
     f_{K_1,M_1} \star g_{K_2,M_2})_{L^2}
   $$
 We now consider separately the contributions of the regions $M_1 M_2^2\leq 1$ and $M_1 M_2^2>1$ to $J$.
 Let us denote by $J_1$ the contribution of $M_1 M_2^2\leq 1$.
 Then we can write by using (\ref{coro2est1}) for some $\beta_0<\beta<1/2$ 
 \begin{eqnarray*}
 J_1 & \lesssim &
  \sum_{K, K_1,K_2 \atop  M_1\lesssim 1, M_2\gtrsim1, M_1 M_2^2\leq 1}  \frac{
M_2 M_1^{1/2}}{K^{1/2}}
(K_1 K_2)^{\beta_0}
  \|h_{M_2}\|_{2}\|f_{K_1,M_1}\|_{2}\|g_{K_2,M_2}\|_{2}
  \\
  & \lesssim &
  \sum_{M_1\lesssim 1, M_2\gtrsim1, M_1 M_2^2\leq 1}
  M_2 M_1^{1/2}\|h\|_{2}\|f_{M_1}\|_{X^{\beta,0,0}}  \|g_{M_2}\|_{X^{\beta,0,0}}
  \\
  & \lesssim &
  \sum_{l\in\N, M_2\gtrsim1}2^{-l/2}\|h\|_{2}\|f_{2^{-l}M_2^{-2}}\|_{X^{\beta,0,0}}  \|g_{M_2}\|_{X^{\beta,0,0}}
  \\
   & \lesssim &\|f\|_{X^{\beta,0,0}}\|g\|_{X^{\beta,0,0}}\|h\|_{2}\,,
   \end{eqnarray*}
 where in the last inequality we used the Cauchy-Schwarz inequality for fixed $l$ and then summing in a straightforward way in $l$.
 
 We next estimate the contribution of $M_1 M_2^2>  1$. 
 Denote by $J_2$ this contribution to $J$. Since
  $ |\sigma|\gtrsim |\xi||\xi_1||\xi_2|$, we can  write
    $ \langle \sigma\rangle \sim 2^l M_1 M_2^2 $ with $ l\in \N $.     
    We only consider the case $K\gtrsim K_2$, the case $K\lesssim K_2$ being simpler. Using \eqref{pro1est1} and proceeding 
     as in A.2 above, we obtain 
  \begin{multline*}
 J_2  \lesssim  \sum_{K_1,K_2\atop M_1\lesssim 1,  M_2\ge 1, l\in \N, M_1M_2^2>1}(K_1\wedge K_2)^{1/2} \langle (K_1\vee K_2)^{1/4} M_1^{1/4}\rangle  
 \\
    \frac{M_1^{1/2} M_2}{2^{l} M_1 M_2^2 }
   \| f_{M_1,K_1}\|_{2}
   \| g'_{M_2,K+K_2}\|_{2}   \| h_{M_2} \chi_{\{\langle \sigma\rangle \sim 2^l M_1 M_2^2\}}\|_{2} ,  
    \end{multline*}
   where again $g'_{M_2,2^lM_1M_2^2+K_2}$satisfies \eqref{cd}. Thus
   \begin{multline*}
 J_2  \lesssim  \sum_{K_1,K_2, 2^lM_1M_2^2\gtrsim K_2\atop M_1\lesssim 1,  M_2, l\in \N, M_1M_2^2>1}(K_1\wedge K_2)^{1/2} \langle (K_1\vee K_2)^{1/4} M_1^{1/4}\rangle  
 \\
    \frac{M_1^{1/2} M_2}{2^{l} M_1 M_2^2   }
    \left(    \frac{K_2}{2^lM_1M_2^2}\right)^{1/2}
  (2^lM_1M_2^2)^{1/2}
   \| f_{M_1,K_1}\|_{2} \| g_{M_2,K_2} \|_{2}   \| h_{M_2}\|_{2} ,  
    \end{multline*}
   We therefore obtain that for a suitable $\delta>0$ and $\beta<1/2$,
   \begin{eqnarray*}
   J_2 & \lesssim &
   \sum_{M_1\lesssim 1,  M_2, M_1M_2^2>1}
   (M_1 M_2^2)^{-\delta}   \| f\|_{X^{\beta,0,0}} \| g_{M_2}\|_{X^{\beta,0,0}}   \| h_{M_2}\|_{2} 
   \\
&   \lesssim & \|f\|_{X^{\beta,0,0}}\|g\|_{X^{\beta,0,0}}\|h\|_{2} \,,
   \end{eqnarray*}
  where in the last inequality we write $ M_1 M_2^2=2^q $ with $ q\in \N $, 
we apply the  Cauchy-Schwarz in $ M_2$ and then we
sum in $ q$. This completes the proof of Proposition~\ref{probilest_bis}.
  \qed 
%%%%%%%%%%%%%%%%%%%%%%%%%%%%%%%%%%%%%%%%%%%%%%%%%%%%%%%%%%%%%%%%%%%%%%%%%%
%%%%%%%%%%%%%%%%%%%%%%%%%%%%%%%%%%%%%%%%%%%%%%%%%%%%%%%%%%%%%%%%%%%%%%%%%%
\subsection{Global well-posedness}\label{33}
Let $ s\ge 0 $ and $ 1/4<b_1<3/8 $. Noticing that in all the estimates in Subsections  \ref{31}-\ref{32} we can  keep a factor
$ K_1^{0+} $ or $ K_2^{0+} $ with $ f_{M_1,K_1} $ or $ g_{M_2,K_2} $ 
and that for any functions $ v\in  X^{1/2,s} $ supported in time  in
$ ]-T,T[ $,
\begin{equation}
\|v\|_{X^{\beta,0,s}} \lesssim T^{(1/2-\beta)-} \|v\|_{X^{1/2,0,s}} \;
,\label{esT}
\end{equation}
we infer that  the following restricted bilinear estimates hold :
 \begin{equation} \label{restbilinearestimate1}
 \|\partial_x(uv) \|_{X^{-1/2,b_1,s}_T} \lesssim
 T^\nu\|u\|_{X^{1/2,b_1,s}_T}\|v\|_{X^{1/2,b_1,s}_T}
 \end{equation}
 and
  \begin{equation} \label{restbilinearestimate2}
 \|\partial_x(uv) \|_{Z^{-1/2,s}_T} \lesssim T^\nu \|u\|_{X^{1/2,b_1,s}_T}\|v\|_{X^{1/2,b_1,s}_T}
 \, ,
 \end{equation}
 with $ \nu>0 $.
 Estimates \eqref{restbilinearestimate1}, \eqref{restbilinearestimate2} in conjugation with the linear estimate (see \cite{Gi})
 $$
\Big \|\psi(t)\int_0^t U(t-t') \partial_x F(t')dt'\Big\|_{ X^{1/2,b_1,s}\cap  Z^{1/2,s}}\lesssim
 \|F\|_{ X^{-1/2,b_1,s}\cap  Z^{-1/2,s}},
 $$
 with $ \psi\in C^\infty_0(\R)$,
 lead to the well-posedness result by a standard fixed point argument
  in $ X^{1/2,b_1,s}_T \cap Z^{1/2,s}_T$, $ T>0 $ small enough,  on the Duhamel formulation of \re{KP2} :
  \begin{equation}\label{duha}
  u(t)=U(t)\varphi-\frac{1}{2}\int_0^t U(t-t') \partial_x (u^2(t')) \, dt'
  \end{equation}
  Also standard considerations prove that the time of existence $ T^* $ of the solution
   only depends on $\|\varphi\|_{L^2}$ (see \eqref{tame}). 
The uniqueness statement for $s>2$ follows from the Gronwall lemma.
Next, from \re{duha} and the fact that
    $H^s(\R\times\T)  $ is an algebra for $ s>1 $ we infer that for $ s>1 $,
     $\partial_x^{-1} u$ belongs to $  C([0,T^*];H^s(\R\times\T)) $ provided $ \varphi\in H^s_{-1}(\R\times \T) $ where 
     $$
\|\varphi\|^2_{H^{s}_{-1}}=\sum_{\eta\in \Z}\int _{\R_{\xi}}
\langle|\xi|^{-1} \rangle^2
\langle(\xi,\eta)\rangle^{2s}|\hat{\varphi}(\xi,\eta)|^2 d\xi \,  .$$
      Taking the $ L^2$ scalar product of \re{KP2} with $\partial_x^{-1} u $ it is then
       easy to check that the $ L^2(\R\times\T)$-norm of such solutions is a constant of the motion.
      The density of $H^s_{-1}(\R\times \T) $ in $L^2(\R\times \T) $ combining with the
       continuity with respect to initial data in $ L^2(\R\times\T) $ ensures that the $ L^2 $-norm is a constant of the motion for our solutions (see \cite{mol} for details
       on this point). 
       This proves that the solutions exist for all time.
This completes the proof of Theorem~\ref{theo1}.

    %%%%%%%%%%%%%%%%%%%%%%%%%%%%%%%%%%%%%%%%%%%%%%%%%%%%%%%%%%%%%%%%%%%
 %%%%%%%%%%%%%%%%%%%%%%%%%%%%%%%%%%%%%%%%%%%%%%%%%%%%%%%%%%%%%%%%%%
\section{Proof of Theorem \ref{theo2}}
 We write the solution $u$ of
(\ref{KP2}), (\ref{1.2}) as
$$
u(t,x,y)=\psi_c(x-ct,y)+v(t,x-ct,y)
$$
where $v(t,.,.) $ is a  $ L^2(\R^2)$-function. This $v$ satisfies the
equation
 \begin{equation}
(v_t-cv_x+v_{xxx} +v v_x +\partial_x (\psi_c v) )_x - v_{yy} =0,\quad
v(0,x,y)=\phi(x,y). \label{LineKP}
\end{equation}
Our strategy is to perform a fixed point argument in some
Bourgain's type spaces on the Duhamel formulation of \re{LineKP}.
In the context of \eqref{LineKP} some straightforward modifications taking into account the term $cv_x$ of the unitary group $U(t)$ and the Bourgain spaces should be done.  

 We will use in a crucial way the following linear estimates of Strichartz or smoothing type  (see for instance \cite{Sa}) injected in the framework of
the Bourgain spaces.
\begin{lem}\label{lem41}
One has
\begin{equation}\label{strichartz}
\|v\|_{L^4_{txy}} \lesssim \|v\|_{X^{b,0,0}}
\end{equation}
and
\begin{equation} \label{kato}
\|v_x\|_{L^\infty_x L^2_{ty}} \lesssim \|v\|_{X^{b,0,0}}\,,
\end{equation}
provided $b>1/2$.
\end{lem}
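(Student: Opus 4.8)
The plan is to deduce both inequalities from the corresponding \emph{homogeneous} estimates for the free group $U(t)$ by the standard Bourgain transfer principle, the hypothesis $b>1/2$ being dictated precisely by this reduction. Thus it suffices to establish the two homogeneous bounds
$$\|U(t)\phi\|_{L^4_{txy}}\lesssim\|\phi\|_{L^2_{xy}}\qquad\text{and}\qquad \|\partial_x U(t)\phi\|_{L^\infty_x L^2_{ty}}\lesssim\|\phi\|_{L^2_{xy}},$$
the first being the $L^4$ Strichartz estimate and the second the Kato smoothing estimate for KP-II (see \cite{Sa}), where $\widehat{U(t)\phi}(\xi,\eta)=e^{it\omega(\xi,\eta)}\hat\phi(\xi,\eta)$ with $\omega(\xi,\eta)=\xi^3-\eta^2/\xi$.

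For the transfer step I would write $f:=U(-\cdot)v$, so that $v(t)=U(t)f(t)$, and expand $f$ in its $t$-Fourier transform to get $v(t)=\int_\R e^{it\tau}U(t)\widetilde f(\tau)\,d\tau$, where $\widetilde f(\tau,\cdot)\in L^2_{xy}$ for a.e. $\tau$. Both target norms are unchanged under multiplication by the unimodular factor $e^{it\tau}$, which sits inside the $t$-integration, so Minkowski's integral inequality together with the relevant homogeneous estimate give, for $Y$ either target norm (with $\partial_x$ acting throughout in the Kato case), $\|v\|_Y\le\int_\R\|U(t)\widetilde f(\tau)\|_Y\,d\tau\lesssim\int_\R\|\widetilde f(\tau)\|_{L^2_{xy}}\,d\tau$. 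Cauchy--Schwarz in $\tau$ bounds the last integral by $\big(\int_\R\langle\tau\rangle^{-2b}\,d\tau\big)^{1/2}\,\|\langle\tau\rangle^b\widetilde f\|_{L^2_{\tau\xi\eta}}$; the first factor is finite \emph{exactly} when $b>1/2$, while the second equals $\|v\|_{X^{b,0,0}}$ by definition of the Bourgain norm. This yields \eqref{strichartz} and \eqref{kato} once the homogeneous bounds are in hand.

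The homogeneous Kato estimate is the cleaner of the two. Fix $x$ and compute the $(t,y)$-Fourier transform of $\partial_x U(t)\phi(x,\cdot,\cdot)$: the $t$-integral produces $\delta(\mu-\omega(\xi,\eta))$, and after the change of variables $\xi\mapsto\mu=\omega(\xi,\eta)$ at fixed $\eta$ (a diffeomorphism on each of $\{\xi>0\}$ and $\{\xi<0\}$, since $\partial_\xi\omega=3\xi^2+\eta^2/\xi^2>0$), Plancherel in $(t,y)$ gives $\|\partial_x U(t)\phi(x,\cdot,\cdot)\|_{L^2_{ty}}^2\lesssim\int_{\R^2}\frac{|\xi|^2}{\partial_\xi\omega(\xi,\eta)}\,|\hat\phi(\xi,\eta)|^2\,d\xi\,d\eta$. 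The decisive point is $\partial_\xi\omega\ge 3|\xi|^2$, so the weight $|\xi|^2/\partial_\xi\omega\le 1/3$ absorbs exactly the derivative coming from $\partial_x$, and the right-hand side is $\lesssim\|\phi\|_{L^2}^2$ uniformly in $x$. This is the mechanism behind the one-derivative smoothing.

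For the $L^4$ estimate I would square and pass to the $TT^*$ reformulation $\|U(t)\phi\|_{L^4}^2=\|(U(t)\phi)^2\|_{L^2}$, reducing to the bilinear bound $\|U(t)\phi\cdot U(t)\psi\|_{L^2(\R^3)}\lesssim\|\phi\|_{L^2}\|\psi\|_{L^2}$. On the Fourier side the product is supported on $\tau=\omega(\xi_1,\eta_1)+\omega(\xi-\xi_1,\eta-\eta_1)$, and after Cauchy--Schwarz the matter reduces to a uniform bound on the measure of the resonant set in $(\xi_1,\eta_1)$ at fixed output frequency. This is controlled by the same type of sublevel-set computation as in Section~2: using the continuous analogue of the resonance identity \eqref{resonant} together with the $\R$-version of Lemma~\ref{le3} (without the $+1$), the change of variables that linearizes the quadratic-in-$\eta_1$ phase yields the required measure bound. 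I expect the main obstacle to be the degeneracy of $\omega$ near $\xi=0$ coming from the $\eta^2/\xi$ singularity, which forces the same frequency-localization care as in Proposition~\ref{propo1}; once that is handled the bilinear bound, and hence \eqref{strichartz}, follow.
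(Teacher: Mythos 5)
Your proposal is correct and follows essentially the same route as the paper: the reduction to homogeneous estimates via the standard transfer principle for $b>1/2$ (which the paper invokes by citing \cite{Gi}), and for \eqref{kato} exactly the same computation — Plancherel in $(t,y)$ after the change of variables $\xi\mapsto\omega(\xi,\eta)$ at fixed $\eta$, with the weight $\xi^2/\partial_\xi\omega\le 1/3$ absorbing the derivative. For \eqref{strichartz} the paper simply cites \cite{TT}; your $TT^*$/bilinear sketch is the standard argument behind that reference (and mirrors the change of variables used later in Proposition~\ref{probilest_tris}), so nothing essential is missing.
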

\begin{proof}
We only brievly recall  the proof of \eqref{kato}, for \eqref{strichartz} we refer to \cite{TT} for instance.
By usual considerations (see \cite{Gi}) it suffices to prove that
\begin{equation} \label{u2}
\|\partial_x (U(t) \varphi )\|_{L^\infty_x L^2_{ty}}  \lesssim
\|\varphi\|_{L^2_{xy}}\; .
\end{equation}
We first notice that
$$
\partial_x (U(t) \varphi )(x,y)= c \int_{\R^2} (i\xi) e^{i(x\xi+y
\eta)} e^{i(\xi^3+c\xi-\eta^2/\xi)t} \hat{\varphi}(\xi,\eta) \, d\xi \,
d\eta
$$
and that, for any fixed $\eta $, $ \xi\mapsto \xi^3+c\xi-\eta^2/\xi $ is an
increasing  bijection from  $ \R_+^* $ into $ \R $ and from  $
\R_-^* $  into $ \R $.
Therefore splitting the $\xi$ integral into two pieces corresponding to positive and negative values of $\xi$ and after 
performing the change of variables $
\xi\mapsto \xi^3+c\xi-\eta^2/\xi $ a short computaion leads to
$$
\|\partial_x U(t) \varphi \|_{L^2_{ty}}^2=c\int_{\R^2}
|\hat{\varphi}(\xi,\eta)|^2 \frac{\xi^2}{3\xi^2+c+\eta^2/\xi^2} \,
d\xi d\eta.
$$
This yields (\ref{u2}).
\end{proof}
%%%%%%%%%%%%%%%%%%%%%%%%%%%%%%%%%%%%%%%%%%%%%%%%%%%%%%%%%%%%%%%%%%%%%%%%%%%%%%%%%%%%%%%%%%%%%%%%%%%%%%%%%%%%%%
\begin{lem}\label{lem42}
For any $ s\ge 0 $, $ 0<T\leq 1 $, $b>1/2$ one has
\begin{equation}\label{eqd}
\|\partial_x(\psi_c v) \|_{X^{0,0,s}_T} \lesssim 
\Bigl(\|\partial_x \psi_c \|_{W^{s,\infty}}+ \sum_{|\alpha|\leq s }\| \partial^{\alpha}_{x,y}\psi_c\|_{L^2_{x} L^\infty_{y}} \Bigr)\|v\|_{X^{b,0,s}_T}
\end{equation}
\end{lem}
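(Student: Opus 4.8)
The plan is to exploit that $X^{0,0,s}$ carries no weight in $\sigma$ (here $b=b_1=0$), so that $\|\cdot\|_{X^{0,0,s}}=\|\cdot\|_{L^2_tH^s_{xy}}$ and sharp cut-offs in time cost nothing. First I would fix an extension $\tilde v$ of $v$ with $\|\tilde v\|_{X^{b,0,s}}\le 2\|v\|_{X^{b,0,s}_T}$ and use that
$$\|\partial_x(\psi_c v)\|_{X^{0,0,s}_T}\le \|\partial_x(\psi_c\tilde v)\|_{L^2_{(-T,T)}H^s_{xy}},$$
so that it suffices to bound the right-hand side. Since $s$ is an integer I would write $\|\cdot\|_{H^s_{xy}}\sim\sum_{|\beta|\le s}\|\partial^\beta_{x,y}\cdot\|_{L^2_{xy}}$ and expand $\partial^\beta_{x,y}\partial_x(\psi_c\tilde v)$ by the Leibniz rule into a finite sum of terms $(\partial^\gamma_{x,y}\psi_c)(\partial^\delta_{x,y}\tilde v)$ with $\gamma+\delta=\beta+e_1$ (where $e_1$ denotes the unit multi-index in $x$), so that $|\gamma|+|\delta|\le s+1$. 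These terms I would split according to whether the extra $\partial_x$ falls on $\tilde v$ or on $\psi_c$.

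When $\delta$ contains an $x$-derivative, I would write $\delta=e_1+\delta'$, so $\partial^\delta_{x,y}\tilde v=\partial_x(\partial^{\delta'}_{x,y}\tilde v)$ with $|\delta'|\le s$ and $|\gamma|\le s$. Applying H\"older first in $y$ (placing the $\psi_c$ factor in $L^\infty_y$) and then in $x$ (placing it in $L^2_x$) gives
$$\|(\partial^\gamma_{x,y}\psi_c)\,\partial_x(\partial^{\delta'}_{x,y}\tilde v)\|_{L^2_{txy}}\le \|\partial^\gamma_{x,y}\psi_c\|_{L^2_xL^\infty_y}\,\|\partial_x(\partial^{\delta'}_{x,y}\tilde v)\|_{L^\infty_xL^2_{ty}},$$
and the Kato smoothing estimate (\ref{kato}) applied to $\partial^{\delta'}_{x,y}\tilde v$ bounds the last factor by $\|\partial^{\delta'}_{x,y}\tilde v\|_{X^{b,0,0}}\le\|\tilde v\|_{X^{b,0,s}}$. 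This produces precisely a contribution $\|\partial^\gamma_{x,y}\psi_c\|_{L^2_xL^\infty_y}\|\tilde v\|_{X^{b,0,s}}$ with $|\gamma|\le s$, i.e. exactly the terms appearing in $\sum_{|\alpha|\le s}\|\partial^\alpha_{x,y}\psi_c\|_{L^2_xL^\infty_y}$.

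When instead $\delta$ has no $x$-derivative, the extra $\partial_x$ is forced onto $\psi_c$, i.e. $\gamma_1\ge1$; writing $\partial^\gamma_{x,y}\psi_c=\partial^{\gamma-e_1}_{x,y}(\partial_x\psi_c)$ with $|\gamma|-1\le s$, I would bound this factor in $L^\infty_{xy}$ by $\|\partial_x\psi_c\|_{W^{s,\infty}}$, place $\partial^\delta_{x,y}\tilde v$ in $L^2_{xy}$, and use $\|\partial^\delta_{x,y}\tilde v\|_{L^2_{(-T,T)}L^2_{xy}}\le(2T)^{1/2}\|\partial^\delta_{x,y}\tilde v\|_{L^\infty_tL^2_{xy}}\lesssim\|\tilde v\|_{X^{b,0,s}}$, where the embedding $X^{b,0,0}\hookrightarrow C_tL^2_{xy}$ for $b>1/2$ together with $T\le1$ absorbs the constant. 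Summing the finitely many Leibniz terms over $|\beta|\le s$ and taking the infimum over extensions $\tilde v$ (to replace $\|\tilde v\|_{X^{b,0,s}}$ by $\|v\|_{X^{b,0,s}_T}$) would then give the claimed bound. The genuine obstacle is the first family of terms, where a $\partial_x$ lands on $v$: since $v$ has no spatial derivative to spare in $X^{b,0,s}$, a crude H\"older argument fails, and only the Kato smoothing estimate of Lemma~\ref{lem41} makes $\psi_c v_x$ (and its higher-order analogues) square integrable. This is consistent with the remark in the introduction that Theorem~\ref{theo2}, unlike the periodic case, requires the smoothing effect for KP-II; the rest is routine multi-index bookkeeping.
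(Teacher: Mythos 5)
Your proof is correct and follows essentially the same route as the paper's: the paper splits $\partial_x(\psi_c v)=\partial_x\psi_c\,v+\psi_c v_x$, bounds the first term by $\|\partial_x\psi_c\|_{W^{s,\infty}}\|v\|_{L^\infty_T H^s}$ and the second via the H\"older pairing $L^2_xL^\infty_y\times L^\infty_xL^2_{ty}$ combined with the Kato smoothing estimate \eqref{kato}, which is exactly your dichotomy according to where the extra $x$-derivative lands. Your version merely makes the Leibniz bookkeeping and the extension/sharp-cutoff step explicit.
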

\begin{proof}
We write $ \partial_x (\psi_c v) $ as $
\partial_x \psi_c v +v_x \psi_c $ and we treat each term separately. Write
\begin{eqnarray*}
\|\partial_x \psi_c v\|_{X^{0,0,s}_T} & = &  \|\partial_x
\psi_c v \|_{L^{2}_T H^s_{xy}} 
\\
& \lesssim &
\|\partial_x \psi_c\|_{W^{s,\infty}}
\|v\|_{L^\infty_T H^s}
\\
& \lesssim &  \|\partial_x  \psi_c\|_{W^{s,\infty}}
\|v\|_{X^{b,0,s}_T}
\end{eqnarray*}
Further, we have
\begin{equation*}
\|\psi_c v_x\|_{X^{0,0,s}_T}
\lesssim 
(\sum_{|\alpha|\leq s }\| \partial^{\alpha}_{x,y}\psi_c\|_{L^2_{x} L^\infty_{Ty}})
(\sum_{|\beta|\leq s }\| \partial^{\beta}_{x,y} v_x\|_{L^\infty_x L^2_{ty}})
\end{equation*}
This leads to (\ref{eqd}) thanks to Lemma \ref{lem41}
\end{proof}
%%%%%%%%%%%%%%%%%%%%%%%%%%%%%%%%%%%%%%%%%%%%%%%%%%%%%%%%%%%%%%%%%%%%%%%%%%%%%%%%%%%%%
\begin{pro}\label{probilest_tris}
There exists $ \varepsilon_0>0 $ such that the following bilinear estimate holds
 \begin{equation} \label{bilinearestimate3}
 \|\partial_x(uv) \|_{X^{-1/2+2\varepsilon,b_1,s}} \lesssim
 \|u\|_{X^{1/2+\varepsilon,b_1,s}} \|v\|_{X^{1/2+\varepsilon,b_1,s}} \;
 \end{equation} 
provided $ 1/4<b_1<3/8 $, $ s\ge 0 $ and $ 0<\varepsilon< \varepsilon_0 $.\\
 \end{pro}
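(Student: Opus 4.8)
The plan is to reduce Proposition~\ref{probilest_tris} to the bilinear estimate \eqref{bilinearestimate1} of Proposition~\ref{probilest_bis} that was already established for the $\R\times\T$ setting. The crucial observation is that the bilinear estimates of Section~2 (Proposition~\ref{propo1} and its corollaries) were proved for functions on $\R\times\R\times\Z$, but the only structural features used were the resonance relation \eqref{resonant} and Lemmas~\ref{le1}--\ref{le3}. Lemmas~\ref{le2} and \ref{le3} each come in a "measure" version (for real variables) and a "cardinality" version (for integer variables), so the entire machinery transfers verbatim to functions on $\R\times\R\times\R$ once the summations over $q\in\Z$ are replaced by integrations over $\eta\in\R$ and the "$+1$" terms in Lemmas~\ref{le2}--\ref{le3} are simply dropped. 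Thus the first step is to record the analogue of Corollary~\ref{coro2} for functions on $\R^2\times\R$, which holds with the same exponent $\beta_0<1/2$.

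With the $\R^2$ analogue of Corollary~\ref{coro2} in hand, I would then repeat the dyadic decomposition and case analysis of Lemmas~\ref{lemme1}--\ref{lemme3} exactly as in Subsection~\ref{31}, now with $\eta$-integrations in place of $q$-summations. The duality setup \eqref{estbil1}, the reduction to $s=0$ via \eqref{tame}, the splitting into the three resonance regions $\mathbf{A}$, $\mathbf{B}$, $\mathbf{C}$ according to which of $K,K_1,K_2$ dominates $MM_1M_2$, and the high/low frequency subdivisions all carry over without change. The key point is that nothing in those arguments exploited the discreteness of the transverse variable beyond the two versions of the counting lemmas; so the continuous case is in fact \emph{easier}, the spurious additive constants disappearing. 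This yields the clean estimate
\begin{equation*}
\|\partial_x(uv)\|_{X^{-1/2,b_1,s}} \lesssim \|u\|_{X^{1/2,b_1,s}}\|v\|_{X^{\beta,0,s}} + \|u\|_{X^{\beta,0,s}}\|v\|_{X^{1/2,b_1,s}}
\end{equation*}
on $\R^2$ for some $\beta<1/2$ and $1/4<b_1<3/8$.

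To pass from this to \eqref{bilinearestimate3} I would choose $\varepsilon_0>0$ so small that $1/2-2\varepsilon_0>\beta$, and then absorb the weak gains. On the left one replaces the exponent $-1/2$ by $-1/2+2\varepsilon$; since raising $\langle\sigma\rangle$ to a slightly larger power only helps on the dual side (equivalently, one loses at most $\langle\sigma\rangle^{2\varepsilon}$, which is controlled by the surplus $K^{0+}$ factors that every case of the proof of Lemma~\ref{lemme1}--\ref{lemme3} can afford to keep), the estimate survives. On the right, the exponent $1/2+\varepsilon>1/2>\beta$ dominates the $X^{\beta,0,s}$ norm appearing above, so both factors $\|u\|_{X^{1/2+\varepsilon,b_1,s}}$ and $\|v\|_{X^{1/2+\varepsilon,b_1,s}}$ bound the corresponding mixed terms. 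Tracking the dependence carefully, the constraint $1/2-2\varepsilon>\beta$ together with $\beta$ being strictly below $1/2$ is exactly what makes this work, which fixes $\varepsilon_0$.

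The main obstacle is \emph{bookkeeping} rather than a genuinely new estimate: one must verify that in every one of the roughly ten subcases of Lemmas~\ref{lemme1}--\ref{lemme3} there is enough room in the powers of $K$, $K_1$, $K_2$ to simultaneously accommodate the shift of the $\sigma$-exponent on the output (the $+2\varepsilon$) and the passage from $\beta$ to $1/2+\varepsilon$ on the inputs, while still summing the dyadic series. Because every case was closed with strict inequalities (powers $2^{-l/2}$, $2^{-(1/2-b_1)l}$, and negative powers of $K$ with room to spare), the $\varepsilon$-perturbation is harmless provided $\varepsilon_0$ is chosen small; the only place demanding slight care is region $\mathbf{B.2}$ of Lemma~\ref{lemme1}, where the extra $b_1$-factor $\langle\langle\sigma\rangle/\langle\xi\rangle^3\rangle^{b_1}$ was essential, and one must check that the inequality $b_1<3/8$ still leaves a margin after the $\varepsilon$-shift.
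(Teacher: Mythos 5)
There is a genuine gap, and it sits exactly at the crux of the proposition. Your reduction works for most of the frequency regions, and indeed the paper's own first step is precisely what you describe: Proposition~\ref{propo1} and Corollaries~\ref{coro1}--\ref{coro2} transfer to functions on $\R^2\times\R$, and in every region where the dyadic analysis of Lemmas~\ref{lemme1}--\ref{lemme3} retains a spare factor $K^{0+}$ the shift from $-1/2$ to $-1/2+2\varepsilon$ on the output is absorbed. But your key claim --- that ``every case of the proof of Lemma~\ref{lemme1}--\ref{lemme3} can afford to keep'' such a surplus --- is false, and the paper says so explicitly in Subsection~\ref{32}: the exceptions are case $\mathbf{A}$ of Lemma~\ref{lemme1} and the region of \eqref{estLambda0} in Lemma~\ref{lemme3}, i.e.\ precisely the region where $\langle\sigma\rangle\gtrsim\max(\langle\sigma_1\rangle,\langle\sigma_2\rangle)$ and $|\xi|\geq 2$. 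There the full weight $\langle\sigma\rangle^{-1/2}\sim K^{-1/2}$ with $K\gtrsim MM_1M_2$ is spent without remainder: for instance in $\mathbf{A.1}$ the sum over $M\lesssim M_2$ of $(M/M_2)^{1-2b_1}$ closes with no negative power of $M_2$ left over, so inserting the extra factor $K^{2\varepsilon}\sim(2^{l}MM_2^2)^{2\varepsilon}$ produces a growing factor $M_2^{6\varepsilon}$ that destroys the final Cauchy--Schwarz in $M_2$. Note also that passing from $X^{-1/2,b_1,s}$ to $X^{-1/2+2\varepsilon,b_1,s}$ on the left makes the estimate \emph{harder}, not easier, since the target norm is larger; the loss of $\langle\sigma\rangle^{2\varepsilon}$ must be paid for somewhere, and in this region it cannot be paid out of the modulations.

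What is missing is therefore a new argument for the region $|\sigma|\geq\max(\langle\sigma_1\rangle,\langle\sigma_2\rangle)$, $|\xi|\geq 2$, and this is where the paper genuinely uses the continuous transverse variable rather than merely translating $q$-sums into $\eta$-integrals. The paper splits this region according to whether $100|\sigma|\geq|\xi|^3$ or not. In the first subcase the additional weight $\langle\langle\sigma\rangle/\langle\xi\rangle^3\rangle^{b_1}$ in the numerator of $\Gamma$ converts the loss into a factor $|\xi|^{1-3b_1-3(1/2-2\varepsilon-b_1)}=|\xi|^{-1/2+6\varepsilon}\lesssim 1$, and the estimate closes with the $L^4$ Strichartz bound \eqref{strichartz}. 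In the second subcase one performs Cauchy--Schwarz in $(\tau_1,\xi_1,\eta_1)$ and computes the resulting integral $I(\tau,\xi,\eta)$ explicitly via the change of variables $(\xi_1,\eta_1)\mapsto(\nu,\mu)=(3\xi\xi_1(\xi-\xi_1),\sigma_1+\sigma_2)$, whose Jacobian $|\nu|^{1/2}|\xi|^{-3/2}(\tfrac34\xi^3-\nu)^{-1/2}|\sigma+\nu-\mu|^{-1/2}$ together with the constraint $|\nu|\leq 3|\sigma|\lesssim|\xi|^3$ yields $I\lesssim|\xi|^{-1/2+6\varepsilon}\lesssim1$. This two-dimensional change-of-variables computation has no counterpart in the $\R\times\T$ analysis of Section~3 and is the actual content of Proposition~\ref{probilest_tris}; without it (or a substitute) your proof does not close.
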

%%%%%%%%%%%%%%%%%%%%%%%%%%%%%%%%%%%%
\begin{proof}
We have that Proposition~\ref{propo1}, Corollaries~\ref{coro1}-\ref{coro2} and their
 proofs are still valid for functions of $ \R^3 $. Since as noticed
 in Subsection~\ref{32} we can always keep a factor $ K $ except in the cases
 $ |\sigma|\ge 2  $ dominant  and $ |\xi|\ge 2 $, we deduce that there exists $ \varepsilon_0>0 $ such that
\begin{multline*}
\Big\| {\mathcal F}_{txy}\Bigl(\chi_{\{ |\sigma|<\max(\langle \sigma_1 \rangle, \langle \sigma_2 \rangle)\}}  \chi_{\{|\xi|<2\}}\xi\,
 \widehat{u}\ast\widehat{v} \Bigr)\Bigr\|_{X^{-1/2+2\varepsilon,b_1,s}}
\\
\lesssim
 \|u\|_{X^{1/2+\varepsilon,b_1,s}} \|v\|_{X^{1/2+\varepsilon,b_1,s}} \;
\end{multline*}
 provided $ 1/4<b_1<3/8 $, $ s\ge 0 $ and $ 0<\varepsilon< \varepsilon_0 $.
It thus remains to
 treat this region $ |\sigma|\ge \max(\langle \sigma_1 \rangle, \langle \sigma_2 \rangle) $, $ |\xi|\ge 2 $. We thus have to
 prove that
\begin{multline*}
J:=\Bigl| \int_{\R^6}
\Gamma^{\tau,\xi,q}_{\tau_1,\xi_1,\eta_1} f(\tau_1,\xi_1,\eta_1)
g(\tau_2,\xi_2,\eta_2) h(\tau,\xi,\eta) \, d\tau \, d\tau_1 \, d\xi \,
d\xi_1 d\eta d\eta_1 \Bigr|
\\
\lesssim \|f\|_{2} \|g\|_{2} \|h\|_{2}
\end{multline*}
where
\begin{multline}\label{defGamma}
 \Gamma^{\tau,\xi,\eta}_{\tau_1,\xi_1,\eta_1}:=  \chi_{\{1\le
|\xi_1|\le |\xi-\xi_1|, \, |\xi|\ge 2, \, |\sigma|\ge
\max(\langle \sigma_1 \rangle, \langle \sigma_2\rangle)\}}
\\  \frac{\Bigl\langle \frac{\langle \sigma
  \rangle}{\langle \xi \rangle^3}\Bigr\rangle^{b_1} |\xi| \langle \zeta \rangle^{s} \langle \zeta_1 \rangle^{-s} \langle \zeta-\zeta_1
  \rangle^{-s} }{
  \langle \sigma \rangle^{1/2-2\varepsilon}
  \langle \sigma_1 \rangle^{1/2+\varepsilon}\Bigl\langle \frac{\langle \sigma_1
  \rangle}{\langle \xi_1 \rangle^3}\Bigr\rangle^{b_1}
  \langle \sigma_2 \rangle^{1/2+\varepsilon}\Bigl\langle \frac{\langle
  \sigma_2 \rangle}{\langle \xi-\xi_1 \rangle^3}\Bigr\rangle^{b_1} }
\end{multline}
with $ \sigma:=\sigma(\tau,\xi,\eta):=\tau-\xi^3-c\xi+\eta^2/\xi $,
 $ \sigma_1:=\sigma(\tau_1,\xi_1,\eta_1)$ and 
$$
 \sigma_2:=\sigma(\tau-\tau_1,\xi-\xi_1,\eta-\eta_1).
$$
Notice that the crucial non resonance relation \eqref{resonant} for $ \sigma$, $ \sigma_1$ and $ \sigma_2$ still holds with the slight modification of the definition of $\sigma$. 
 We can of course assume that $ f$, $ g$  and $ h$ are
 non-negative functions in $ \R^3 $. We separate the domain of integration  into two regions. 
\\
   {\bf 1. $ 100 |\sigma|\geq |\xi|^3$.}   By Plancherel and then H\"{o}lder inequality,  using the Strichartz inequality (\ref{strichartz})
    , we infer that for $\varepsilon\ll 1$,
  \begin{multline*}
  J  \lesssim 
 \int_{\R^3} |\xi|^{1-3b_1-3(1/2-2\varepsilon-b_1)}h(\tau,\xi,\eta) 
\\ 
\int_{\R^3} 
\frac{f(\tau_1,\xi_1,\eta_1)g(\tau-\tau_1,
  \xi-\xi_1,\eta-\eta_1)}{\langle \sigma_1\rangle^{1/2+\varepsilon} \langle
  \sigma_2\rangle^{1/2+\varepsilon}}d\tau_1 \, d\xi_1 \, d\eta_1  d\tau d\xi d\eta
\\
\lesssim 
\Bigl\|{\mathcal F}_{\tau,\xi,\eta}\Bigl( \frac{f}{\langle
   \sigma\rangle^{1/2+\varepsilon}}\Bigr)\Bigr\|_4
  \Bigl\|{\mathcal F}_{\tau,\xi,\eta}\Bigl( \frac{g}{\langle \sigma
  \rangle^{1/2+\varepsilon}}\Bigr)\Bigr\|_{L^4} \|h\|_{L^2}
\lesssim  \|f\|_{L^2}\|g\|_{L^2}\|h\|_{L^2}
  \end{multline*}
  {\bf  2.  $   100 |\sigma|\leq |\xi|^3    $.} 
Applying  Cauchy-Schwarz in $
(\tau_1,\xi_1,\eta_1) $  and setting $(\tau_2,\xi_2,\eta_2):=
(\tau-\tau_1,\xi-\xi_1,\eta-\eta_1) $ we get
  \begin{multline}\label{yy}
  J \lesssim  \int_{\R^3} I(\tau,\xi,\eta)
\\
 \Bigl[\int_{\R^3}
 \Bigl| f(\tau_1,\xi_1,\eta_1)g(\tau_2,\xi_2,\eta_2)\Bigr|^2 \,
  d\tau_1 \, d\xi_1 \, d\eta_1\Bigr]^{1/2} h(\tau,\xi,\eta)
  d\tau d\xi d\eta , 
  \end{multline}
  where, using the elementary inequality ,
$$
\int_{\R} \frac{d\theta}{\langle \theta-a\rangle^{1+2\varepsilon} \langle\theta-b \rangle^{1+2\varepsilon}} \, \lesssim
 \frac{1}{\langle a+b\rangle^{1+2\varepsilon}}
$$
it holds, by the resonance relation, 
  \begin{eqnarray*}
  I(\tau,\xi,\eta) &\lesssim &  \frac{|\xi|}  {\langle
  \sigma\rangle^{1/2-2\varepsilon}}
   \Bigl[
  \int_{\R^2} \Bigl(\int_{\max(|\sigma_1|,|\sigma_2|) \le |\sigma|}\frac{d\tau_1}{\langle \sigma_1 \rangle^{1+2\varepsilon}   \langle \sigma_2 \rangle^{1+2\varepsilon}
} \Bigr)  d\xi_1 d\eta_1 \Bigr]^{1/2}
 \\
  & \lesssim &  \frac{|\xi| }{\langle
  \sigma\rangle^{1/2-2\varepsilon}}\Bigl(\int_{\R^2\cap \{|\xi\xi_1(\xi-\xi_1)|\le |\sigma|\}} \frac{d\xi_1 \,
   d\eta_1}{\langle \sigma_1+\sigma_2\rangle^{1+2\varepsilon} }\Bigr)^{1/2}
  \end{eqnarray*}
  We perform the change of variables $ (\xi_1,\eta_1)\mapsto
  (\nu,\mu) $ with 
$$
  \left\{\begin{array}{l}
  \nu=3\xi\xi_1(\xi-\xi_1)\\
  \mu=\sigma_1+\sigma_2
  \end{array}
  \right.
$$
Noticing that $ \nu\in [-3|\sigma|, 3|\sigma|]$, 
$$
d\xi_1 \, d\eta_1= c \frac{|\nu|^{1/2} \, d\nu\,
d\mu}{|\xi|^{3/2} (\frac{3}{4} \xi^3-\nu)^{1/2}
|\sigma+\nu-\mu|^{1/2}}
$$
and using the elementary inequality
$$
\int_{\R} \frac{ d\theta}{\langle \theta-a\rangle^{1+2\varepsilon}
 |\theta-b |^{1/2}}  \lesssim
 \frac{1}{\langle a+b\rangle^{1/2}}\, ,
$$
we thus infer that
\begin{eqnarray*}
I &\lesssim & \frac{|\xi|^{1/4} }{\langle\sigma\rangle^{1/2-2\varepsilon}}\Bigl[\int_{-3|\sigma|}^{3|\sigma|} \int_{-\infty}^{+\infty}
 \frac{|\nu|^{1/2} \, d\nu\, d\mu} { \langle \mu \rangle^{1+2\varepsilon}(\frac{3}{4} \xi^3-\nu)^{1/2} |\sigma+\nu-\mu|^{1/2}} \Bigr]^{1/2}\\
 & \lesssim & \frac{|\xi|^{1/4}}{\langle\sigma\rangle^{1/2-2\varepsilon}}\Bigl[\int_{-3|\sigma|}^{3|\sigma|}
 \frac{|\nu|^{1/2} \, d\nu} { (\frac{3}{4} \xi^3-\nu)^{1/2} \langle\sigma+\nu\rangle^{1/2}} \Bigr]^{1/2}\, .
\end{eqnarray*}
 By the definition of $ \nu $ it holds $ |\nu |\le \frac{3}{8}|\xi|^3 $, i.e.   $ | \frac{3}{4} \xi^3-\nu|\sim |\xi|^3 $ and thus
$$
I\lesssim
\frac{|\xi|^{-1/2}}{\langle\sigma\rangle^{1/2-2\varepsilon}}\Bigl[\int_{-3|\sigma|}^{3|\sigma|}
 \frac{|\nu|^{1/2} \, d\nu} { \langle\sigma+\nu\rangle^{1/2}} \Bigr]^{1/2}
 \lesssim |\xi|^{-1/2}\langle\sigma\rangle^{2\varepsilon}
 \lesssim |\xi|^{-1/2+6\varepsilon}\lesssim 1,
 $$
 provided $ 0<\varepsilon\le 1/12 $. This concludes the proof of \re{bilinearestimate3} by applying Cauchy-Schwarz in $ (\tau,\xi,\eta) $ in \re{yy}.
This completes the proof of Proposition~\ref{probilest_tris}.
\end{proof}
%%%%%%%%%%%%%%%%%%%%%%%%%%%%%%%%%%%%%%%%%%%%%%%%%%%%%%%%%%%%%%%%%%%%%%%%%%%%%%%%%%%%%%%%%%%%%%%%
For $T\leq 1$ and $\varepsilon\ll 1$, , we have the following estimate (see \cite {Gi}, \cite{TT})
 \begin{equation}\label{TT1}
\Big\|
\int_0^t U(t-t')F(t')dt'
\Big\|_{X^{1/2+\varepsilon,b_1,s}_T}
\lesssim
\|F\|_{X^{-1/2+\varepsilon,b_1,s}_T}\,.
 \end{equation}
Moreover, using \cite[Theorem~3.1]{TT}, we obtain that for some $\nu>0$,
 \begin{equation}\label{TT2}
\|F\|_{X^{-1/2+\varepsilon,b_1,s}_T}
\lesssim
T^\nu\|F\|_{X^{-1/2+2\varepsilon,b_1,s}_T}\,.
\end{equation}
Combining Proposition \ref{probilest_tris}, Lemma \ref{lem42} and the bounds \eqref{TT1}, \eqref{TT2}, 
we infer that the map
  $$
  {\mathcal G} : v\mapsto  U(t) \phi -\int_0^t U(t-t') \partial_x
  (v^2/2+\psi_c v) \, dt'
  $$
  is a strictly contractive map in the ball of radius $ R:=2\|\phi\|_{H^s} $
  of $ X^{1/2+\varepsilon,b_1,s}_T $ provided $ T=T(R)>0 $ is
  small enough. Therefore there exists a unique local solution.
  Moreover, arguing  as in Subsection \ref{33} it is easy to check
  that the following differential identity holds for our solutions
  :
  $$
\frac{1}{2} \frac{d}{dt} \int_{\R^2} v^2 =-\frac{1}{2} \int_{\R^2}
\partial_x(\psi_c)\, v^2
  $$
and thus
$$
\|v(t)\|_{L^2} \lesssim \exp(t\|\psi_x\|_{L^\infty})
\|\phi\|_{L^2} \; .
$$
This leads to the global well-posedness result.
%%%%%%%%%%%%%%%%%%%%%%%%%%%%%%%
\begin{merci}
The Authors acknowledge support from the project ANR-07-BLAN-0250 of the Agence Nationale de la Recherche.
\end{merci}

\end{document}